\DeclareRobustCommand{\VAN}[3]{#2}
\tikzstyle{printersafe}=[snake=snake,segment amplitude=0 pt]
\DeclareSymbolFont{cyrletters}{OT2}{wncyr}{m}{n}
\DeclareMathSymbol{\Sha}{\mathalpha}{cyrletters}{"58}
\newtheorem{Thm}[subsubsection]{Theorem}
\newtheorem{Lem}[subsubsection]{Lemma}
\newtheorem{Prop}[subsubsection]{Proposition}
\newtheorem{Cor}[subsubsection]{Corollary}
\newtheorem{Conj}[subsubsection]{Conjecture}
\newtheorem{mainThm}{Theorem}
\theoremstyle{definition}
\newtheorem{Def}[subsubsection]{Definition}
	\numberwithin{equation}{subsection}
	\newtheorem{prop}[subsubsection]{Proposition}
	\theoremstyle{definition}
	\theoremstyle{remark}
	\newtheorem*{assump*}{Assumption}
	\theoremstyle{definition}
\theoremstyle{remark}
\newtheorem{Vb}[subsubsection]{Example}
\newtheorem{Rem}[subsubsection]{Remark}
\newtheorem{Claim}[subsubsection]{Claim}
\newcommand{\afs}{\mathbb{A}_f^{\Sigma}}
\newcommand{\gsc}{G^{\mathrm{sc}}}
\newcommand{\gder}{G^{\mathrm{der}}}
\newcommand{\gad}{G^{\mathrm{ad}}}
\newcommand{\spec}{\operatorname{Spec}}
\newcommand{\gal}{\operatorname{Gal}}
\newcommand\restr[2]{{
 \left.\kern-\nulldelimiterspace 
 #1 
 \vphantom{\big|} 
 \right|_{#2} 
 }}
\newcommand{\gr}{\operatorname{Gr}}
\newcommand{\affperfk}{\mathbf{Aff}_k^{\mathrm{perf}}}
\newcommand{\mE}{\mathcal{E}}
\newcommand{\smE}{\sigma^{\ast} \mE}
\newcommand{\mF}{\mathcal{F}}
\newcommand{\mG}{\mathcal{G}}
\newcommand{\grg}{\operatorname{Gr}_{K}}
\newcommand{\lpGk}{L^+ \mathcal{G}_{K}}
\newcommand{\lpGj}{L^+ \mathcal{G}_{J}}
\newcommand{\lpGe}{L^+ \mathcal{G}_{\emptyset}}
\newcommand{\lmG}{L^m \mathcal{G}_{K}}
\newcommand{\lmGe}{L^m \mathcal{G}_{\emptyset}}
\newcommand{\lG}{LG}
\newcommand{\twKK}{W_K \backslash \tilde{W}/W_K}
\newcommand{\mloce}{\operatorname{M}^{\mathrm{loc}}_{\emptyset, \{\mu\}}}
\newcommand{\mloc}{\operatorname{M}^{\mathrm{loc}}_{K, \{\mu\}}}
\newcommand{\mloceonered}{\operatorname{M}^{\mathrm{loc},1-\mathrm{red}}_{\emptyset, \{\mu\}}}
\newcommand{\mloconered}{\operatorname{M}^{\mathrm{loc},1-\mathrm{red}}_{K, \{\mu\}}}
\newcommand{\mloceinf}{\operatorname{M}^{\mathrm{loc},\infty}_{\emptyset, \{\mu\}}}
\newcommand{\mlocinf}{\operatorname{M}^{\mathrm{loc},\infty}_{K, \{\mu\}}}
\newcommand{\mlocjinf}{\operatorname{M}^{\mathrm{loc},\infty}_{J, \{\mu\}}}
\newcommand{\mloceone}{\ker \gamma \backslash \mloceinf}
\newcommand{\mlocn}{\operatorname{M}^{\mathrm{loc,n}}_{K, \{\mu\}}}
\newcommand{\hkg}{\operatorname{Hk}_{K}}
\newcommand{\sht}{\operatorname{Sht}_{G,K}}
\newcommand{\shtb}{\operatorname{Sht}_{G,K,[b]}}
\newcommand{\shtj}{\operatorname{Sht}_{G,J}}
\newcommand{\mgred}{(\overline{\mG}_K)^{\mathrm{red}}}
\newcommand{\mgrede}{(\overline{\mG}_{\emptyset})^{\mathrm{red}}}
\newcommand{\shg}{\operatorname{Sh}_{G,K,U^p}}
\newcommand{\shgsp}{\operatorname{Sh}_{G_V,\mathcal{P},M^p}}
\newcommand{\shegsp}{\operatorname{Sh}_{G_V,\mathcal{P}',M^p}}
\newcommand{\shginf}{\operatorname{Sh}_{G,K}}
\newcommand{\shgj}{\operatorname{Sh}_{G,J,U^p}}
\newcommand{\shgjinf}{\operatorname{Sh}_{G,J}}
\newcommand{\shgeinf}{\operatorname{Sh}_{G,\emptyset}}
\newcommand{\shgetinf}{\widehat{\operatorname{Sh}}_{G,\emptyset}}
\newcommand{\shgetinfb}{\widehat{\operatorname{Sh}}_{G,\emptyset,[b]}}
\newcommand{\shge}{\operatorname{Sh}_{G,\emptyset,U^p}}
\newcommand{\shget}{\widehat{\operatorname{Sh}}_{G,\emptyset,U^p}}
\newcommand{\tshget}{\widetilde{\widehat{\operatorname{Sh}}}_{G, \emptyset,U^p}}
\newcommand{\shgetb}{\widehat{\operatorname{Sh}}_{G,\emptyset,[b],U^p}}
\newcommand{\admu}{\operatorname{Adm}(\{\mu\})}
\newcommand{\shtemu}{\operatorname{Sht}_{G,\emptyset, \{\mu\}}}
\newcommand{\shtmu}{\operatorname{Sht}_{G,K, \{\mu\}}}
\newcommand{\shtjmu}{\operatorname{Sht}_{G,J, \{\mu\}}}
\newcommand{\shtgemu}{\operatorname{Sht}_{G,\emptyset, \{\mu\}}}
\newcommand{\shtgmu}{\operatorname{Sht}_{G,K, \{\mu\}}}
\newcommand{\shgb}{\operatorname{Sh}_{G,K,[b],U^p}}
\newcommand{\shgeb}{\operatorname{Sh}_{G,\emptyset,[b],U^p}}
\newcommand{\shtmub}{{\operatorname{Sht}_{G,K, \{\mu\},[b]}}}
\newcommand{\qbar}{\overline{\mathbb{Q}}}
\newcommand{\qpur}{\mathbb{Q}_p^{\mathrm{ur}}}
\newcommand{\zpur}{\mathbb{Z}_p^{\mathrm{ur}}}
\newcommand{\qp}{\mathbb{Q}_p}
\newcommand{\ql}{\mathbb{Q}_{\ell}}
\newcommand{\zp}{\mathbb{Z}_{p}}
\newcommand{\zlocp}{\mathbb{Z}_{(p)}}
\newcommand{\qpbar}{\overline{\mathbb{Q}}_p}
\newcommand{\zpbar}{\overline{\mathbb{Z}}_p}
\newcommand{\ovfp}{\overline{\mathbb{F}}_p}
\newcommand{\qpbr}{\breve{\mathbb{Q}}_p}
\newcommand{\zpbr}{\breve{\mathbb{Z}}_p}
\newcommand{\gsp}{\operatorname{GSp}}
\newcommand{\afp}{\mathbb{A}_{f}^{p}}
\newcommand{\af}{\mathbb{A}_{f}}
\newcommand{\admuk}{\operatorname{Adm}(\{\mu\})_K}
\newcommand{\kadmu}{{}^{K} \!\! \operatorname{Adm}(\{\mu\})}
\newcommand{\bgmu}{B(G, \{\mu\})}
\newcommand{\xmub}{X(\mu,b)_K}
\newcommand{\xmubj}{X(\mu,b)_J}
\newcommand{\xmube}{X(\mu,b)_{\emptyset}}
\newcommand{\fp}{\overline{\mathbb{F}}_{p}}
\newcommand{\scrs}{\ensuremath{\mathscr{S}}}
\newcommand{\scrS}{\ensuremath{\mathscr{S}}}
\let\@wraptoccontribs\wraptoccontribs
\thanks{This work was supported by the Engineering and Physical Sciences Research Council [EP/L015234/1], The EPSRC Centre for Doctoral Training in Geometry and Number Theory (The London School of Geometry and Number Theory), University College London and King's College London.}
\author{Pol van Hoften}
\address{Stanford mathematics department, 450 Jane Stanford way, Building 380, Stanford, CA 94305, USA} \email{p.van.hoften@vu.nl}
\subjclass[2010]{Primary 11G18; Secondary 14G35}
\title{Mod $p$ points on Shimura varieties of parahoric level}
\begin{document}
\begin{abstract}
We study the $\ovfp$-points of the Kisin--Pappas integral models of Shimura varieties of Hodge type with parahoric level. We show that if the group is quasi-split, then every isogeny class contains the reduction of a CM point, proving a conjecture of Kisin--Madapusi-Pera--Shin. We furthermore show that the mod $p$ isogeny classes are of the form predicted by the Langlands--Rapoport conjecture (cf. Conjecture 9.2 of \cite{Rapoport}) if either the Shimura variety is proper or if the group at $p$ is unramified. The main ingredient in our work is a global argument that allows us to reduce the conjecture to the case of very special parahoric level. This case is dealt with in the appendix by Rong Zhou. As a corollary to our arguments, we determine the connected components of Ekedahl--Oort strata.
\end{abstract}

\maketitle
\setcounter{tocdepth}{2}
\tableofcontents

\section{Introduction and statement of results}
\subsection{Introduction}
In \cite{LanglandsZeta}, Langlands outlines a three-part approach to prove that the Hasse--Weil zeta functions of Shimura varieties are related to $L$-functions of automorphic forms. The second part is about describing the mod $p$ points of suitable integral models of Shimura varieties, which is the central topic of this article. 

A conjectural description of the mod $p$ points of integral models of Shimura varieties was first given by Langlands in \cite{LanglandsJugendtraum} and was later refined by Langlands--Rapoport and Rapoport \cite{LanglandsRapoport, Reimann, Rapoport}. Together with the test function conjecture of Haines--Kottwitz \cite{HainesConjecture}, which was recently proved by Haines--Richarz \cite{HainesRicharz}, this conjecture is the main geometrical input to the Langlands--Kottwitz method for Shimura varieties of parahoric level. To explain these conjectures, we first need to introduce some notation. \smallskip

Let $(G,X)$ be a Shimura datum of Hodge type, let $p$ be a prime number and let $U_p \subset G(\qp)$ be a parahoric subgroup. For sufficiently small compact open subgroups $U^p \subset G(\afp)$, there is a Shimura variety $\mathbf{Sh}_{U}(G,X)$ of level $U=U^pU_p$, which is a smooth quasi-projective variety defined over the reflex field $E$. For a prime $v | p$ of $E$ we let $\mathcal{O}_{E,(v)}$ be the localisation of the ring of integers $\mathcal{O}_E$ of $E$ at the prime ideal $v$. Then there should be a \emph{canonical} integral model $\mathscr{S}_{U}(G,X)$ over $\mathcal{O}_{E, (v)}$. When $U_p$ is hyperspecial, canonical integral models should be smooth and are unique if they satisfy a certain extension property (cf. \cite{MilneII}). Recent work \cite{Pappas,PappasRapoportShtukas} of Pappas and Pappas--Rapoport defines a notion of canonical integral models when $U_p$ is an arbitrary parahoric and proves that they are unique if they exist. 

Then there should be a bijection (see \cite[Section 5]{LanglandsRapoport} and \cite[Conjecture 9.2]{Rapoport})
\begin{align} \label{Eq:Uniformisation}
  \varprojlim_{U^p} \mathscr{S}_{U^p U_p}(G,X)(\ovfp) \simeq \coprod_{\phi} S(\phi), 
\end{align}
where
\begin{align}
  S(\phi)=I_{\phi}(\mathbb{Q}) \backslash X_p(\phi) \times X^p(\phi).
\end{align}
Let us elaborate: If we think of $\mathscr{S}_{U^p U_p}(G,X)$ as parametrizing "abelian varieties with $G$-structure", then the sets $S(\phi)$ should correspond to points in a single isogeny class of "abelian varieties with $G$-structure" over $\ovfp$. For a fixed point $x$ in such an isogeny class, the set $X_p(\phi)$ parametrises "abelian varieties with $G$-structure" with a fixed $p$-power isogeny to $x$, and the set $X^p(\phi)$ parametrises "abelian varieties with $G$-structure" with a fixed prime-to-$p$ isogeny to $x$. The isogeny class of $x$ is then given by the quotient of $ X_p(\phi) \times X^p(\phi)$ by the group $I_{\phi}(\mathbb{Q})$ of self quasi-isogenies of $x$. The set $X^p(\phi)$ is a $G(\afp)$-torsor and $X_p(\phi)$ is a subset of $G(\qpur)/\mathcal{G}(\zpur)$, where $\mathcal{G}/\zp$ is the parahoric group scheme with $\mathcal{G}(\zp)=U_p$. In fact, the set $X_p(\phi)$ is the set of $\ovfp$-points of an affine Deligne--Lusztig variety, see Section \ref{Sec:ADLVSet}.

In the unramified PEL case, \eqref{Eq:Uniformisation} corresponds to Rapoport--Zink uniformisation of isogeny classes (see \cite[Section 6]{RapoportZink}), with $X_p(\phi)$ corresponding to the set of $\ovfp$-points of a Rapoport--Zink space. This is why we will often refer to \eqref{Eq:Uniformisation} as \emph{uniformisation} of isogeny classes. Uniformisation of isogeny classes for Shimura varieties of Hodge type is often assumed in recent work in the area, see e.g. \cite{HamacherKim, Hesse,PappasRapoportShtukas}.

One also expects that \eqref{Eq:Uniformisation} is compatible with the action of $G(\afp)$ on both sides, and that the action of Frobenius on the left-hand side should correspond to the action of a certain operator $\Phi$ on the right-hand side, see \cite[Conjecture 9.2]{Rapoport}. If $G_{\mathbb{Q}_p}$ is quasi-split, then we moreover expect that each isogeny class contains the reduction of a special point, see \cite[Conjecture 1]{KMPS}. 
\subsection{Main results} \label{Sec:MainResultsNotation}
Let $(G,X)$ be a Shimura datum of Hodge type, let $p>2$ be a prime number. We will assume throughout this introduction that: The group $G_{\mathbb{Q}_p}$ is quasi-split and splits over a tamely ramified extension, the prime $p$ does not divide the order of $\pi_1(\gder)$ and $\pi_1(G)_{I_{p}}$ is torsion-free\footnote{For Shimura data of abelian type that are not of type $D^{\mathbb{H}}$ in the sense of \cite[Appendix B]{Milne}, one can always find an auxiliary Shimura datum of Hodge type where the last two conditions are satisfied, see \cite[Lemma 4.6.22]{KisinPappas}.}. Here $I_{p} \subset \gal(\qpbar/\qp)$ is the inertia group and $\pi_1(G)$ is the algebraic fundamental group of $G$, see \cite{Borovoi}. 

Let $U_p \subset G(\qp)$ be a parahoric subgroup, let $U^p \subset G(\afp)$ be a sufficiently small compact open subgroup and consider the Shimura variety $\mathbf{Sh}_{U}(G,X)$ of level $U=U^pU_p$. By \cite[Theorem 0.1]{KisinPappas}, this Shimura variety has an extension to a flat normal scheme $\mathscr{S}_{U}(G,X)$ over $\mathcal{O}_{E_,(v)}$, where $v | p$ is a prime of the reflex field $E$. Under our assumptions, these integral models are canonical in the sense of \cite[Definition 7.1.3]{Pappas}, see \cite[Theorem 1.4]{Pappas}. 
\begin{restatable}{mainThm}{CMTheorem} \label{Thm:CMLifting}
Let $(G,X)$ be a Shimura variety of Hodge type as above. Then each isogeny class of $\mathscr{S}_{U}(G,X)(\ovfp)$ contains a point $x$ which is the reduction of a special point on $\mathbf{Sh}_{U}(G,X)$. \end{restatable} 
This confirms \cite[Conjecture 1]{KMPS}. Theorem \ref{Thm:CMLifting} for very special parahoric subgroups $U_p$ is part 2 of Theorem \ref{Thm:VerySpecialUniformisationAndLifting} of the appendix by Rong Zhou.

Theorem \ref{Thm:CMLifting} was proved by Kisin when $U_p$ is a hyperspecial subgroup, see \cite{KisinPoints}, and proved by Zhou when $G_{\qp}$ is residually split in the sense of \cite[Definition 9.10.2]{KalethaPrasad}, see \cite{Z}. We remind the reader that split implies residually split implies quasi-split, and that residually split and unramified implies split. As in \cite{KisinPoints,Z}, such a lifting result is deduced from uniformisation of isogeny classes, which is our second main result. Part 1 of the next theorem is part 1 of Theorem \ref{Thm:VerySpecialUniformisationAndLifting} of the appendix.
\begin{restatable}{mainThm}{Uniformisation} \label{Thm:Uniformisation}
Let $(G,X)$ be as above and let $U_p$ denote a parahoric subgroup of $G(\qp)$.
\begin{enumerate}
  \item  If $U_p$ is very special, then each isogeny class of $\mathscr{S}_{U}(G,X)(\ovfp)$ has the form
\begin{align}
I_{\phi}(\mathbb{Q}) \backslash X_p(\phi) \times X^p(\phi)/U^p.
\end{align}
  \item If either $G_{\mathbb{Q}_p}$ splits over an unramified extension or if $\mathbf{Sh}_{U}(G,X)$ is proper, then the same conclusion holds for arbitrary parahoric subgroups $U_p$.
\end{enumerate}
\end{restatable}
 \smallskip 
As a consequence of part 2 of Theorem 2, we verify that the He--Rapoport axioms of \cite{He-Rapoport} hold for the Kisin--Pappas integral models. All but one of the axioms (Axiom 4(c)) were proved in earlier work of Zhou, see \cite{Z}.
\begin{mainThm} \label{Thm:HeRapoport}
Let $(G,X)$ be a Shimura datum of Hodge type as above. If either $G_{\mathbb{Q}_p}$ splits over an unramified extension or if $\mathbf{Sh}_{U}(G,X)$ is proper, then the He--Rapoport axioms of \cite[Section 3]{He-Rapoport} hold for the Kisin--Pappas integral models.
\end{mainThm}
Combining our proof of part 2 of Theorem \ref{Thm:Uniformisation} with the $\ell$-adic monodromy theorem of \cite{OrdinaryHO}, we obtain a computation of the set of irreducible components of the Ekedahl--Kottwitz--Oort--Rapoport (EKOR) strata defined by Shen--Yu--Zhang in \cite{ShenYuZhang}. We assume for simplicity that $\gad$ is simple over $\mathbb{Q}$, see Theorem \ref{Thm:IrredEKOR} for a more general statement.
\begin{mainThm} \label{Thm:EKOR}
Let $(G,X)$ be as above, and let $U_p$ denote a very special parahoric. Let $\mathscr{S}_{U, \ovfp}\{w\}$ be an EKOR stratum that is not contained in the smallest Newton stratum. If either $G_{\mathbb{Q}_p}$ splits over an unramified extension or if $\mathbf{Sh}_{U}(G,X)$ is proper, then the natural map
\begin{align}
  \mathscr{S}_{U, \ovfp}\{w\} \to \mathscr{S}_{U,\ovfp}(G,X)
\end{align}
induces a bijection on sets of connected components. 
\end{mainThm}
If $U_p$ is hyperspecial, then EKOR strata coincide with Ekedahl--Oort strata, and this theorem determines their connected components. Theorem \ref{Thm:EKOR} was proved by Ekedahl and van der Geer \cite{EkedahlvanderGeer} in the Siegel case. Theorem \ref{Thm:EKOR} is used in \cite{vHXiao} to determine the connected components of Igusa varieties and Newton strata.
\begin{Rem}
We have stated Theorem \ref{Thm:EKOR} only for very special parahoric subgroups because we don't understand the set of connected components of $\mathscr{S}_{U,\ovfp}(G,X)$ when $U_p$ is a general parahoric subgroup and $\mathbf{Sh}_{U}(G,X)$ is not proper. If the Shimura variety is proper, then a similar statement holds at arbitrary parahoric level. This can be deduced from the proof of Theorem \ref{Thm:IrredEKOR}.
\end{Rem}
\subsection{Overview of the proof}
Both \cite{KisinPoints} and \cite{Z} employ roughly the same strategy, which we will now briefly sketch: The integral models $\mathscr{S}_{U}(G,X)$ of Shimura varieties of Hodge type come equipped, by construction, with finite maps $\mathscr{S}_{U}(G,X) \to \mathscr{S}_{M}(\gsp, S^{\pm})$ to Siegel modular varieties. Given a point $x \in \mathscr{S}_{U}(G,X)(\ovfp)$, classical Dieudonné theory produces a map
\begin{align}
  X_p(\phi) \to \mathscr{S}_{M}(\gsp, S^{\pm})(\ovfp),
\end{align}
and the main difficulty is to show that it factors through $\mathscr{S}_{U}(G,X)$. A deformation theoretic argument shows that it suffices to prove this factorisation for one point on each connected component of $X_p(\phi)$\footnote{There is a perfect scheme whose set of $\ovfp$-points is naturally identified with $X_p(\phi)$, which gives a decomposition of $X_p(\phi)$ into connected components, see Lemma \ref{Lem:ADLVLemma}.}, and therefore we need to understand these connected components. In the hyperspecial case, this is done in \cite{CKV}, and in the parahoric case this is done in \cite{HZ}, under the assumption that $G_{\mathbb{Q}_p}$ is residually split. The main obstruction to extend the methods of \cite{Z} beyond the residually split case, is that we do not understand connected components of affine Deligne--Lusztig varieties of parahoric level for more general groups.\footnote{After a first version of our paper appeared, we learned of work of Nie \cite{NieConnected} which solves this problem for unramified groups. Recently there has been work of Gleason--Lim--Xu \cite{GleasonLimXu} and Gleason--Lourenço \cite{GleasonLourenco} which completely settles the problem of understanding connected components of affine Deligne--Lusztig varieties.} 

\subsubsection{} The geometry of affine Deligne--Lusztig varieties is simpler the larger the parahoric subgroup is. For unramified groups the geometry is simplest for hyperspecial subgroups, and for more general quasi-split groups the geometry is simplest for very special subgroups. This is why it is reasonable to try to prove Theorems \ref{Thm:Uniformisation} and \ref{Thm:CMLifting} for very special parahoric subgroups, using the above strategy.

In Appendix \ref{Sec:VerySpecialADLV}, Rong Zhou studies connected components of affine Deligne--Lusztig varieties for quasi-split groups and very special parahoric subgroups, generalising results of \cite{CKV} and \cite{Nie1} in the case of unramified groups and hyperspecial level. In particular, part 1 of Theorem \ref{Thm:Uniformisation} and Theorem \ref{Thm:CMLifting} in the case of a very special parahoric are proved there, see Theorem \ref{Thm:VerySpecialUniformisationAndLifting}. 

\subsubsection{} To prove uniformisation for a general parahoric subgroup, we use the fact that every parahoric subgroup contains an Iwahori subgroup, and that every Iwahori subgroup is contained in a very special parahoric subgroup if $G_{\qp}$ is quasi-split. Thus given the results of Appendix \ref{Sec:VerySpecialADLV}, we need to show that the validity of Theorems \ref{Thm:Uniformisation} and \ref{Thm:CMLifting} propagates "up" from very special parahoric subgroups to Iwahori subgroups, and propagates "down" from Iwahori subgroups to general parahoric subgroups. The latter is proved in \cite[Proposition 7.7]{Z}, and so we focus on the former.

Let $U_p$ denote a very special parahoric subgroup and let $U_p'$ denote an Iwahori subgroup contained in $U_p$, then by \cite[Section 7]{Z} there is a proper morphism of integral models $\mathscr{S}_{U'}(G,X) \to \mathscr{S}_{U}(G,X)$ and we let $\operatorname{Sh}_{U_p'} \to \operatorname{Sh}_{U_p}$ be the induced morphism on the perfections of their special fibers. There is a commutative diagram
\begin{equation} \label{Eq:KeyDiagram}
  \begin{tikzcd}
  \operatorname{Sh}_{G, U'} \arrow{r} \arrow{d} & \operatorname{Sht}_{G, \mu, U_p'} \arrow{d} \\
  \operatorname{Sh}_{G, U} \arrow{r} & \operatorname{Sht}_{G, \mu, U_p},
  \end{tikzcd}
\end{equation}
where $\operatorname{Sht}_{G, \mu, U_p}$ is the stack of parahoric $U_p$-shtukas of type $\mu$ introduced by Xiao--Zhu \cite{XiaoZhu} and Shen--Yu--Zhang \cite{ShenYuZhang} (see Section \ref{Sec:Shtukas}, \ref{Sec:RelPos}), with $\mu$ the inverse of the Hodge cocharacter induced by the Shimura datum $(G,X)$.

The horizontal morphisms in \eqref{Eq:KeyDiagram} are the Hodge type analogues of the morphism from the moduli space of abelian varieties to the moduli stack of quasi-polarised Dieudonn\'e modules. If $(G,X)=(\operatorname{GSp},S^{\pm})$, then this diagram is Cartesian. In general, it follows from `local uniformisation' of $\operatorname{Sht}_{G, \mu, U'}$, that isogeny classes in $\operatorname{Sh}_{G, U_p'}$ have the correct form if \eqref{Eq:KeyDiagram} is Cartesian, see Theorem \ref{Thm:LiftingUniformisation}. One of the main technical results of this paper, Theorem \ref{Thm:DiagramCartesian}, is that the diagram is Cartesian under the assumptions of part 2 of Theorem \ref{Thm:Uniformisation}, which proves a conjecture of He and Rapoport that we learned from Rong Zhou.   

\subsubsection{} We prove in Section \ref{Sec:Local}, see Proposition \ref{Prop:Representable}, that the morphism $\operatorname{Sht}_{G, \mu, U_p'} \to \operatorname{Sht}_{G, \mu, U_p}$ is representable in perfectly proper algebraic spaces, and we let $\operatorname{Sh}_{G, U', \star}$ be the fiber product of \eqref{Eq:KeyDiagram}. There is a map $\iota:\operatorname{Sh}_{G, U'} \to \operatorname{Sh}_{G, U', \star}$ given by the universal property of the fiber product and we prove that it is a closed immersion, see Proposition \ref{Prop:ClosedImmersion}. To prove the main theorem, it suffices to show that $\iota$ is an isomorphism. We do this by showing it is a closed immersion of equidimensional perfect algebraic spaces of the same dimension whose image intersects every irreducible component of
the target, which clearly must then be an isomorphism.

We first show that $\operatorname{Sh}_{G, U', \star}$ is equidimensional of the same dimension as $\operatorname{Sh}_{G, U'}$ and that it has a Kottwitz--Rapoport (KR) stratification with the expected properties. To do this, we build a local model diagram for $\operatorname{Sh}_{G, U', \star}$ in the world of perfect algebraic geometry, see Proposition \ref{Prop:perfectlysmooth}. This requires us to produce a version of the diagram in \eqref{Eq:KeyDiagram} for stacks of restricted shtukas, and to analyse the forgetful maps for these stacks. Another key ingredient is the fact, proved by Hoff, \cite{HoffThesis}, that the morphisms from $\operatorname{Sh}_{G, U}$ to these stacks of restricted shtukas are perfectly smooth. 

The next step is to study the irreducible components of $\operatorname{Sh}_{G, U', \star}$ and $\operatorname{Sh}_{G, U'}$.  In Section \ref{Sec:KRConnected}, see Proposition \ref{Prop:GlobalActionTransitiveKR}, we will show that each irreducible component of $\operatorname{Sh}_{G, U', \star}$ can be moved into $\operatorname{Sh}_{G, U'}$ using prime-to-$p$ Hecke operators. Since 
$\operatorname{Sh}_{G, U', \star}$ is stable under the prime-to-$p$ Hecke operators, we may conclude from this that $\iota:\operatorname{Sh}_{G, U'} \to \operatorname{Sh}_{G, U', \star}$ is an isomorphism.

To prove Proposition \ref{Prop:GlobalActionTransitiveKR}, we use the KR stratification of both $\operatorname{Sh}_{G, U'}$ and $\operatorname{Sh}_{G, U', \star}$ to reduce to analysing irreducible components in each KR stratum separately. Our proof then proceeds by degenerating to the zero-dimensional KR stratum, which we describe explicitly using Rapoport--Zink uniformisation of the basic locus. 

Our assumption that either $G_{\mathbb{Q}_p}$ splits over an unramified extension or that $\mathbf{Sh}_{U}(G,X)$ is proper will be used to prove that every irreducible component of the closure of a KR stratum in $\operatorname{Sh}_{G, U', \star}$ intersects the zero-dimensional KR stratum, see Lemma \ref{Lem:RaynaudArgument} and Proposition \ref{Prop:KRZero}. In the proper case, it is enough to prove that KR strata in $\operatorname{Sh}_{G, U', \star}$ are quasi-affine. In the unramified case, we use results of \cite{WedhornZiegler} and \cite{Andreatta} on the Ekedahl--Oort stratification and results of \cite{He1} on the geometry of forgetful maps. 

\subsection{Outline of the paper}
In Section \ref{Sec:Local} we will study forgetful maps for moduli stacks of local shtukas and moduli stacks of restricted local shtukas. We will also study Newton strata in moduli spaces of shtukas and describe them explicitly in terms of affine Deligne--Lusztig varieties. In Section \ref{Sec:Uniformisation} we study uniformisation of isogeny classes in Shimura varieties of Hodge type at parahoric level. We will deduce the existence of CM lifts at arbitrary parahoric level from the results of Appendix \ref{Sec:VerySpecialADLV}, and we will show that uniformisation for general parahoric subgroups is equivalent to a certain diagram being Cartesian. In Section \ref{Sec:Cartesian}, we prove that this diagram is Cartesian.
\section{Local shtukas} \label{Sec:Local}
We start this section by recalling some perfect algebraic geometry from \cite[Appendix A]{XiaoZhu} and defining a notion of weakly perfectly smooth morphisms of perfect algebraic stacks. 

In the rest of the section we will recall the moduli stacks of local shtukas with parahoric level of \cite{ShenYuZhang} and study the forgetful maps between them. We start by proving Proposition \ref{Prop:Representable}, which states that this forgetful map is representable and (perfectly) proper. We then study forgetful maps of restricted local shtukas and prove Proposition \ref{Prop:CartesianInfiniteToFinite}, which is an important technical result that will be used in Section \ref{Sec:Uniformisation} to prove equidimensionality of $\operatorname{Sh}_{G, U_p', \star}$.

In the second half, we discuss $\sigma$-conjugacy classes and the Newton stratification on moduli stacks of local shtukas. We end by discussing affine Deligne--Lusztig varieties and use them in Lemma \ref{Lem:LocalUniformisation} to describe Newton strata in moduli stacks of local shtukas. This latter result is used in Section \ref{Sec:Uniformisation} to lift uniformisation along forgetful maps.

\subsection{Some perfect algebraic geometry} \label{Sec:Dimensions} We will use the language of perfect algebraic geometry from \cite[Appendix A]{Zhu1}. Let $k$ be a perfect field and denote by $\affperfk$ the category of perfect $k$-algebras, on which we will consider both the \'etale and fpqc topologies. Perfect $k$-schemes define fpqc sheaves on $\affperfk$, and for $X$ a scheme over $k$ we will write $X^{\mathrm{perf}}$ for the (inverse) perfection of $X$, given by the inverse limit over the relative $k$-Frobenius of $X$. This inverse limit exists in the category of schemes, see \cite[Section 5]{Perfection}, and the natural map $X^{\mathrm{perf}} \to X$ is a universal homeomorphism.

Perfect algebraic spaces are defined to be sheaves $X$ on $\affperfk$ such that the diagonal $X \to X \times X$ is representable in perfect schemes, and such that $X$ admits an \'etale surjection from a scheme (cf. \cite[Definition 025Y]{stacks-project}). See \cite[Definition A.1.7]{XiaoZhu} for the definition of a pfp (perfectly of finite presentation) algebraic space. A perfect algebraic space is pfp if and only if it is isomorphic to the perfection of an algebraic space of finite presentation over $k$, see \cite[Proposition A.1.8]{XiaoZhu}. We will often write pfp algebraic space to mean pfp perfect algebraic space. A \emph{deperfection} of a pfp algebraic space $Y$ is a morphism $Y \to Y_0$ with $Y$ an algebraic space of finite presentation that induces an isomorphism $Y \xrightarrow{\sim} Y_0^{\mathrm{perf}}$.
\begin{Lem} \label{Lem:LimitLemma}
    If $X$ is a pfp algebraic space. Then for every directed set $I$ and any inverse system $\{T_i\}_{i \in I}$ of perfect qcqs $k$-schemes with affine transition maps $T_i \to T_J$, the natural map
    \begin{align}
        \operatorname{Hom}(\varprojlim_i T_i, X) \to \varinjlim_i \operatorname{Hom}(T_i, X)
    \end{align}
is a bijection.\footnote{Note that the inverse limit $T=\varprojlim_i T_i$ exists in the category of schemes by \cite[Tag 01YX]{stacks-project}. Moreover, this $T$ is a perfect scheme since perfection commutes with inverse limits (being an inverse limit).}
\end{Lem}
\begin{proof}
    Choose a deperfection $X \to X_0$ of $X$ using \cite[Proposition A.1.8]{XiaoZhu}. We may then apply \cite[Proposition 01ZC]{stacks-project} to deduce that the natural map
    \begin{align} \label{Eq:LimitMap}
        \operatorname{Hom}(\varprojlim_i T_i, X_0) \to \varinjlim_i \operatorname{Hom}(T_i, X_0),
    \end{align}
    is an isomorphism. We conclude by noting that \eqref{Eq:LimitMap} can be identified
    \begin{align}
        \operatorname{Hom}(\varprojlim_i T_i, X) \to \varinjlim_i \operatorname{Hom}(T_i, X)
    \end{align}
    since the $T_i$ are perfect and since $\varprojlim_i T_i$ is perfect.
\end{proof}

\subsubsection{} We will use the notion of perfectly proper morphisms of perfect algebraic spaces, see \cite[Definition A.18]{Zhu1}. A morphism $f:X \to Y$ of pfp algebraic spaces over $k$ is perfectly proper if and only if it is isomorphic to the perfection of a proper morphism of algebraic spaces of finite presentation over $k$, see \cite[Lemma A.19]{Zhu1}. We will often write perfectly proper algebraic space to mean a perfect algebraic space whose structure map to $\spec k$ is perfectly proper.

Recall that a morphism $f:X \to Y$ of perfect algebraic spaces is called \emph{perfectly smooth of relative dimension $d$ at $x$}, where $x \in X$, if there is an \'etale neighbourhood $U \to X$ of $x$ and $V \to Y$ of $f(x)$ such that $U \to X \to Y$ factors through a map $h:U \to V$ and such that $h$ factors as 
\begin{equation} \label{Eq:FactorisationPerfectlySmooth}
    \begin{tikzcd}
       & U \arrow{r} \arrow{dl}{h'} \arrow{d}{h} & X \arrow{d}{f} \\
           (\mathbb{A}^{d}_{k})^{\mathrm{perf}} \times V \arrow{r}{\operatorname{pr}}&  V \arrow{r} & Y,
    \end{tikzcd}
\end{equation}
where $h'$ is \'etale and where $\operatorname{pr}$ is the projection onto $V$. It is called \emph{perfectly smooth of relative dimension $d$} if it is perfectly smooth of relative dimension $d$ at all points $x \in X$. This property is preserved under base change, and the composition of a perfectly smooth morphism of relative dimension $d$ with a perfectly smooth morphism of relative dimension $d'$ is perfectly smooth of relative dimension $d+d'$. A morphism $X \to Y$ is called \emph{perfectly smooth} if it is perfectly smooth of \emph{some} dimension at every $x \in X$. This property is also preserved under base change and composition. 
\begin{Vb} \label{Vb:SmoothPerfSmooth}
If $f:X \to Y$ is a morphism of schemes over $k$ that is smooth of relative dimension $d$ at $x \in X$, then $f^{\mathrm{perf}}:X^{\mathrm{perf}} \to Y^{\mathrm{perf}}$ is perfectly smooth of relative dimension $d$ at $x$ by \cite[Lemma 054L]{stacks-project}. Indeed, the natural map $X^{\mathrm{perf}} \to X$ is a universal homeomorphism and thus identifies the \'etale sites of $X$ and $X^{\mathrm{perf}}$, see \cite[Theorem 05ZH]{stacks-project}
\end{Vb}
\begin{Vb} \label{Vb:GroupScheme}
    Let $G$ be a pfp group scheme over $\spec k$. Then $G \to \spec k$ arises as the perfection of a smooth group scheme over $k$ by \cite[Lemma A.26]{Zhu1}, and therefore $G \to \spec k$ is perfectly smooth by Example \ref{Vb:SmoothPerfSmooth}. This furthermore means that $G$-torsors for the \'etale topology are perfectly smooth morphisms, as the property of being perfectly smooth is clearly \'etale local on the target. 
\end{Vb}
The following lemma is a straightforward consequence of the definition.
\begin{Lem} \label{Lem:RelDimLocConstant2}
Let $f:X \to Y$ be a perfectly smooth morphism of perfect algebraic spaces. If $X$ is connected, then $f$ is perfectly smooth of relative dimension $d$ for some integer $d$.
\end{Lem}
We will later use the notion of normality for perfect algebraic spaces. Note that if an algebraic space $Y$ is normal then its perfection $Y^{\mathrm{perf}}$ is normal. Indeed, since normality is \'etale local, see \cite[Lemma 034F]{stacks-project}, this can be reduced to the affine case using the fact that Frobenius is affine, and then it follows from the fact that a filtered colimit of normal rings is normal.  
\begin{Lem} \label{Lem:NormalDeperfection}
    A normal pfp algebraic space $Y$ admits a normal deperfection.
\end{Lem}
\begin{proof}
Since $Y$ has finitely many irreducible components (because it is pfp), it follows from \cite[Lemma 0357]{stacks-project}, that $Y$ is a disjoint union of finitely many integral normal algebraic spaces. Thus we may assume that $Y$ is integral and normal. Choose a deperfection $Y \to Y_0$ with $Y_0$ reduced and observe that $Y_0$ is irreducible because $Y$ is, and thus $Y_0$ is integral. Let $\tilde{Y}_0 \to Y_0$ be the normalisation of $Y_0$ and note that it suffices to show that $\tilde{Y}_0$ has the same perfection as $Y_0$. Since normalisation commutes with \'etale base change, see \cite[Lemma 082F]{stacks-project}, we may assume that $Y_0=\spec A_0$ with $A_0$ an integral domain.

Then $\tilde{Y}_0$ corresponds to an $A_0$-algebra $B$, and $Y$ corresponds to an $A_0$-algebra $A$. Since the morphism $A_0 \to A$ is an injective integral morphism and $A$ is normal, it follows that $B$ is isomorphic to the integral closure of $A_0$ inside of $A$. But this implies that $B$ has the same perfection as $A_0$. 
\end{proof}
\begin{Lem} \label{Lem:NormalityIsLocal}
    Let $f:X \to Y$ be a perfectly smooth morphism of pfp algebraic spaces. If $Y$ is normal, then $X$ is normal. If $f$ is moreover surjective and $X$ is normal, then $Y$ is normal. 
\end{Lem}
\begin{proof}
Fix $x \in X$ with image $y \in Y$. By definition, we know that $f$ is perfectly smooth of relative dimension $d$ at $x$. Thus there are \'etale neighbourhoods $U \to X$ of $x$ and $V \to Y$ of $u$ such that $U \to X \to Y$ factors through a map $h:U \to V$ and such that $h$ factors as in equation \eqref{Eq:FactorisationPerfectlySmooth}. 

Assume that $Y$ is normal and choose a normal deperfection $Y \to Y_0$ of $Y$ using Lemma \ref{Lem:NormalDeperfection}. Then by topological invariance of the \'etale site, see \cite[Theorem 05ZH]{stacks-project}, there is a unique \'etale morphism $V_0 \to Y_0$ whose perfection recovers $V \to Y$. Similarly, there is a unique \'etale morphism $U_0 \to \mathbb{A}^{d}_{k} \times V_0$ whose perfection recovers $h'$. The induced map $U_0 \to Y_0$ is smooth because it is a composition of smooth maps. It follows from \cite[Lemma 034F]{stacks-project} that $U_0$ is normal since $Y_0$ is normal. Hence $U$ is normal and so $X$ is normal (in a neighborhood of $x$), by \cite[Lemma 034F]{stacks-project}. This argument works for arbitrary $x$ and thus proves the normality of $X$.

\smallskip We will now assume that $X$ is normal and show that $Y$ is normal in a neighbourhood of $y$ (if $f$ is surjective, this thus shows that normality of $X$ implies the normality of $Y$). Let $Y \to Y_0$ be any deperfection, then we will show that the normalisation $\tilde{Y}_0 \to Y_0$ is a universal homeomorphism. This implies that $\tilde{Y}_0 \to Y_0$ induces an isomorphism on perfections by \cite[Lemma 3.8]{BhattScholze}, and thus $Y$ is the perfection of a normal scheme and hence normal. 

Let $V_0 \to Y_0$ and $U_0 \to \mathbb{A}^{d}_{k} \times V_0$ be as above. Since normalisation commutes with smooth base change, see \cite[Lemma 082F]{stacks-project}, we find that there is a Cartesian diagram (where $\tilde{U}_0 \to U_0$ is the normalisation of $U_0$)
\begin{equation}
    \begin{tikzcd}
        \tilde{U}_0 \arrow{d} \arrow{r} & \tilde{Y}_0 \arrow{d} \\
        U_0 \arrow{r} & Y_0. 
    \end{tikzcd}
\end{equation}
Since $X$ is normal, we find that $U$ is normal by \cite[Lemma 034F]{stacks-project}. By \cite[Lemma 0BB4]{stacks-project}, there is a unique commutative diagram
\begin{equation}
    \begin{tikzcd}
        \tilde{U}_0 \arrow{dr} & & U \arrow{dl} \arrow{ll} \\
        & U_0
    \end{tikzcd}.
\end{equation}
This shows that $\tilde{U}_0 \to U_0$ is injective on $k$-points, and hence universally injective. Since $\tilde{U}_0 \to U_0$ is also surjective and closed, it follows that $\tilde{U}_0 \to U_0$ is a universal homeomorphism. It follows from \cite[Lemma 0CFX]{stacks-project} that $\tilde{Y}_0 \to Y_0$ is a universal homeomorphism, as desired.
\end{proof}

\subsubsection{} Let $f:X \to Y$ be a morphism of pfp algebraic spaces. We expect that if there is a perfectly smooth surjective map $g:Z \to X$ such that $f \circ g$ is perfectly smooth, then $f$ itself is perfectly smooth. However, we do not know how to prove this, hence we make the auxiliary definitions \ref{Def:WeaklyPerfectlySmooth} and \ref{Def:WeaklyPerfectlySmoothII}.

\begin{Def} \label{Def:WeaklyPerfectlySmooth}
A morphism $f:Y \to Z$ of perfect algebraic spaces is called \emph{weakly perfectly smooth of relative dimension $d$ at $y$} for $y \in Y$ if: There exists an open neighborhood $U$ of $y$ and a surjective map $g:X \to U$ that is perfectly smooth of relative dimension $e$, where $X$ is a perfect algebraic space, such that $f \circ g:X \to Z$ is perfectly smooth of relative dimension $e+d$. A morphism is called \emph{weakly perfectly smooth of relative dimension $d$} if it is weakly perfectly smooth of relative dimension $d$ at $y$ for all $y \in Y$. 
\end{Def}
This property is preserved under base change, and the composition of a weakly perfectly smooth morphism of relative dimension $d_1$ with a weakly perfectly smooth morphism of relative dimension $d_2$ is a weakly perfectly smooth morphism of relative dimension $d_1+d_2$. Indeed, suppose we are given morphisms $f_1:X \to Y$ and $f_2:Y \to Z$ together with surjections $g_1:X' \to X$ and $g_2:Y' \to Y$ that are perfectly smooth of relative dimension $e_1$ and $e_2$, respectively and such that $X' \to X \to Y$ and $Y' \to Y \to Z$ are perfectly smooth of relative dimension $d_1+e_1$ and $d_2+e_2$, respectively.  Then $X'':=X' \times_{Y} Y'$ is perfectly smooth over $X$ of relative dimension $e_1+e_2$, and $X'' \to X \to Z$ is perfectly smooth of relative dimension $e_2+d_2+e_1+d_1$ by writing it as the composition of $X'' \to Y'$ and $Y' \to Z$. \smallskip 

The following lemmas show that the integer $d$ is well-defined. 
\begin{Lem} \label{Lem:Dimension}
Let $f:X \to Y$ be a weakly perfectly smooth morphism of equidimensional pfp algebraic spaces such that the fibers of $f$ are equidimensional of dimension $d$. Then $\operatorname{Dim} X + d = \operatorname{Dim} Y$.
\end{Lem}
\begin{proof}
Since the dimension can be computed \'etale locally, we may assume that $X$ and $Y$ are equidimensional pfp schemes. We can moreover compute the dimensions of $X$ and $Y$ in terms of the Krull dimensions of their local rings at closed points since $X$ and $Y$ are pfp. So let $x \in X$ be a closed point with image $y \in Y$, then since $f$ is the perfection of a finite type morphism between Noetherian schemes, it follows from \cite[Lemma 00OM]{stacks-project} that 
\begin{align} \label{eq:Inequality}
    \operatorname{Dim} \mathcal{O}_{X,x} \le \operatorname{Dim} \mathcal{O}_{Y,y} + \operatorname{Dim} \mathcal{O}_{f^{-1}(y),x}.
\end{align}
Since $f$ is flat (perfectly smooth morphisms are clearly flat, and flatness can be checked after an fpqc cover (in particular a perfectly smooth cover)), it follows that going down holds for $\mathcal{O}_{X,x} \to \mathcal{O}_{Y,y}$, see \cite[Lemma 00HS]{stacks-project}. The inequality in \eqref{eq:Inequality} is then an equality by \cite[Lemma 00ON]{stacks-project} applied to the local rings of a choice of deperfection of $f$.
\end{proof}
The following lemma has a straightforward proof.
\begin{Lem} \label{Lem:WeaklyPerfectlysmoothRelDim}
If $f:Y \to Z$ is weakly perfectly smooth of relative dimension $d$ at $y \in Y$, then there is an open neighborhood $U$ of $y$ such that $U \cap f^{-1}(f(y))$ is equidimensional of dimension $d$. 
\end{Lem}
A morphism $f:Y \to Z$ is called \emph{weakly perfectly smooth} if there is a perfectly smooth surjection $g:X \to Y$ such that $f \circ g$ is perfectly smooth. The following lemma relates this to Definition \ref{Def:WeaklyPerfectlySmooth}, the proof is straightforward.
\begin{Lem} \label{Lem:WeaklyPerfectlySmooth2}
A morphism $f:Y \to Z$ is weakly perfectly smooth if and only if for all $y \in Y$ the morphism $f$ is weakly perfectly smooth of relative dimension $d_y$ at $y$, for some positive integer $d_y$ which is allowed to depend on $y$.
\end{Lem}

\begin{Lem} \label{Lem:RelDimLocConstant}
Let $f:Y \to Z$ be a weakly perfectly smooth morphism of perfect algebraic spaces. If $Y$ is connected, then $f$ is weakly perfectly smooth of relative dimension $d$ for some $d$.
\end{Lem}
\begin{proof}
It follows from Lemma \ref{Lem:WeaklyPerfectlySmooth2} that for $y \in Y$ there exists a positive integer $d_y$ such that $f$ is weakly perfectly smooth of relative dimension $d_y$ at $y$. Moreover, the same is true for all $u$ in an open neighborhood $U_y$ of $y$. 

Thus if $y,y' \in Y$ with positive integers $d_y,d_y'$ and open neighborhoods $U_y, U_{y'}$, then $U_y \cap U_{y'}$ is non-empty because $Y$ is connected. Therefore there is a point $u \in U_y \cap U_{y'}$ such that $f$ is weakly perfectly smooth of relative dimensions $d$ and $d'$ at $u$. By Lemma \ref{Lem:WeaklyPerfectlysmoothRelDim}, it follows that $d=d'$ and we conclude that $f$ is weakly perfectly smooth of relative dimension $d$.
\end{proof}
\begin{Lem} \label{Lem:WeaklyPerfectlySmoothNormal}
Let $f:Y \to Z$ be a weakly perfectly smooth surjective morphism of pfp algebraic spaces. Then $Z$ is normal if and only if $Y$ is normal. 
\end{Lem}
\begin{proof}
This can be deduced from Lemma \ref{Lem:NormalityIsLocal}.
\end{proof}

\subsubsection{} We follow \cite[Section 04XB]{stacks-project} to define certain properties of morphisms of prestacks on $\affperfk$ that are representable in morphisms of perfect algebraic spaces. For example, a morphism $f:X \to Y$ of prestacks that is representable in perfect algebraic spaces is called \emph{perfectly smooth} if it is representable in perfectly smooth morphisms of perfect algebraic spaces. In other words, if for every morphism $T \to Y$, where $T$ is a perfect algebraic space, the base change $X_T \to T$ is a perfectly smooth morphism of perfect algebraic spaces. 

A \emph{pfp algebraic stack} is a stack $Y$ on $\affperfk$ for the \'etale topology with diagonal representable in pfp algebraic spaces that admits a perfectly smooth surjective\footnote{This means per definition that $f$ is representable in perfectly smooth surjections of perfect algebraic spaces.} map $f:U \to Y$ from a pfp algebraic space. The main example that we will be interested in is the quotient stack\footnote{We always take quotient stacks in the \'etale topology unless otherwise specified.} $[X/G]$ of a pfp algebraic space $X$ by a pfp group scheme $G$. This is a pfp algebraic stack because $X \to [X/G]$ is perfectly smooth since $G$ is perfectly smooth over $\spec k$, see Example \ref{Vb:GroupScheme}. We will also need a notion of weak perfect smoothness for morphisms of pfp algebraic stacks that are not necessarily representable.
\begin{Def} \label{Def:WeaklyPerfectlySmoothII}
A morphism $f: Y \to Z$ of pfp algebraic stacks is called \emph{weakly perfectly smooth}, if there is a perfectly smooth surjective morphism $g: X \to Y$ from a pfp algebraic space $X$ such that the composition $f \circ g$ is perfectly smooth.
\end{Def}
As before, this property is preserved under base change and composition. If $f:Y \to Z$ is representable, then this is (per definition) equivalent to asking that $f:Y \to Z$ is representable in weakly perfectly smooth morphisms of perfect algebraic spaces. 
\begin{Vb} \label{Vb:Torsor}
Let $G$ be a pfp group scheme over $\spec k$, which is perfectly smooth over $\spec k$ by Example \ref{Vb:GroupScheme}. This implies that the natural map $\spec k \to \left[\spec k/G \right]$ is perfectly smooth and thus $\left[\spec k/G \right] \to \spec k$ is weakly perfectly smooth. 
\end{Vb}
\begin{Vb} \label{Vb:Gerbe}
Recall that an \'etale $G$-gerbe over a pfp algebraic stack $Y$ is a morphism $f:X \to Y$ of pfp algebraic stacks that is \'etale locally (on $Y$) of the form $Y \times \left[\spec k/G \right] \to Y$. Since $ \left[\spec k/G \right] \to \spec k$ is weakly perfectly smooth, it follows that $f:X \to Y$ is weakly perfectly smooth because this can be checked \'etale locally on $Y$.
\end{Vb}
\begin{Rem}
In \cite[Definition A.1.13]{XiaoZhu}, a morphism of pfp algebraic stacks satisfying the property in Definition \ref{Def:WeaklyPerfectlySmoothII} is called a perfectly smooth morphism. However, it is not clear to us why a morphism $f:Y \to Z$ of pfp algebraic spaces satisfying the property in Definition \ref{Def:WeaklyPerfectlySmoothII} is perfectly smooth (in the sense defined in the beginning of Section \ref{Sec:Dimensions}), rather than just weakly perfectly smooth. [This result should be true, but we were not able to find a proof].
\end{Rem}
\begin{Lem} \label{Lem:DimensionResult}
Suppose that $X$ is a pfp algebraic space that is equidimensional of dimension $d$ with an action of a pfp group scheme $G$, and let $Y$ be a pfp algebraic space together with a weakly perfectly smooth morphism
\begin{align}
  f:Y \to \left[X/G \right].
\end{align}
Then $Y$ is equidimensional if and only if $f$ is weakly perfectly smooth of relative dimension $n$, where $\operatorname{Dim} Y =d+n-\operatorname{Dim} G$. 
\end{Lem}
\begin{proof}
Consider the fiber product diagram
\begin{equation}
  \begin{tikzcd}
  \tilde{Y} \arrow{r}{\tilde{f}} \arrow{d} & X \arrow{d} \\
  Y \arrow{r}{f} & \left[X/G \right].
  \end{tikzcd}
\end{equation}
The lemma is now a straightforward consequence of Lemma \ref{Lem:WeaklyPerfectlysmoothRelDim}.
\end{proof}

\subsection{Affine flag varieties, moduli stacks of shtukas and forgetful maps}
\subsubsection{} \label{Sec:Parahorics}
Let $k=\ovfp$ and let $\zpbr=W(k)$ and $\qpbr=\zpbr[1/p]$, which come equipped with an automorphism $\sigma$ coming from the absolute Frobenius on $k$. Let $G$ be a connected reductive group over $\mathbb{Q}_p$ and let $B(G,\mathbb{Q}_p)$ (resp. $B(G,\qpbr)$) denote the (extended) Bruhat--Tits building of $G$ over $\mathbb{Q}_p$ (resp. $\qpbr$). For a non-empty bounded subset $\Xi \subset B(G,\qp)$ which is contained in an apartment, we let $G(\qp)_\Xi$ (resp. $G(\qpbr)_{\Xi}$) denote the subgroup of $G(\qp)$ (resp. $G(\qpbr)$) which fixes $\Xi$ pointwise. By the main result of \cite{BT2}, there exists a smooth affine group scheme $\tilde{\mathcal{G}}_{\Xi}$ over $\zp$ with generic fiber $G$ which is uniquely characterised by the property $\tilde{\mathcal{G}}_{\Xi}(\zpbr)=G(\qpbr)_{\Xi}$. We call such a group scheme the Bruhat--Tits stabiliser group scheme associated to $\Xi$. If $\Xi=\{x\}$ is a point, we write $G(\qp)_x$ (resp. $\mathcal{\tilde{G}}_x)$ for $G(\qp)_{\{x\}}$ (resp. $\tilde{\mathcal{G}}_{\{x\}}$). 
  
 For $\Xi\subset B(G,\qp)$ as above, we let $\mathcal{G}_{\Xi}$ denote the `connected stabiliser' (cf. \cite[\S4]{BT2}). We are mainly interested in the case that $\Xi$ is a point or an open facet $\mathfrak{f}$. In this case $\mathcal{G}_{\mathfrak{f}}$ (resp. $\mathcal{G}_x$) is the parahoric group scheme associated to $\mathfrak{f}$ (resp. $x$). 

 We may also consider the corresponding objects over $\qpbr$ and we use the same notation in this case. When it is understood which point of $B(G,\qp)$ or $B(G,\qpbr)$ we are referring to, we simply write $\tilde{\mathcal{G}}$ and $\mathcal{G}$ for the corresponding group schemes. 

 An important case that we need for applications is when $\tilde{\mathcal{G}}_x=\mathcal{G}_x$, i.e., when the parahoric is equal to the Bruhat--Tits stabiliser. When this happens, we necessarily have $\tilde{\mathcal{G}}_{\mathfrak{f}}=\tilde{\mathcal{G}}_x$, where $\mathfrak{f}$ is the facet containing $x$, and $x\in \mathfrak{f}$ is a point `in general position'. A parahoric group scheme $\mathcal{G}$ over $\zp$ (resp. $\zpbr$) is called a \emph{connected parahoric} if there exists $x\in B(G,\qp)$ (resp. $x\in B(G,\qpbr)$) such that $
\mathcal{G}=\mathcal{G}_x=\tilde{\mathcal{G}}_x$. 
 
 Let $\pi_1(G)$ be the algebraic fundamental group of $G \otimes \qpbar$, equipped with its action of $\gal(\qpbar/\qp)$ (see the introduction of \cite{Borovoi}), and let $I \subset \gal(\qpbar/\qp)$ be the inertia group.
\begin{Lem} \label{Lem:TorsionFreeParahoricConnected}
If $\pi(G)_I$ is torsion free, then $\tilde{\mathcal{G}}_x=\mathcal{G}_x$ for all $x$. In other words, all parahoric group schemes are connected parahoric group schemes. 
\end{Lem} 
\begin{proof}
This follows from \cite[Remark 11 of the appendix]{PappasRapoportAffineFlag}.
\end{proof}

\subsubsection{} \label{Sec:AffineWeylGroup}
Let $S \subset G_{\qpbr}$ be a maximal $\qpbr$-split torus defined over $\mathbb{Q}_p$ which exists by \cite[Axiom 4.1.27.UR2]{KalethaPrasad}, and let $T$ be its centraliser. Then $T$ is a maximal torus of $G$ because $G_{\qpbr}$ is quasi-split by a theorem of Steinberg, see \cite[Theorem 2.3.3]{KalethaPrasad}. Choose a $\sigma$-invariant alcove $\mathfrak{a}$ in the apartment of $B(G,\qpbr)$ associated to $S$. Let $N$ be the normaliser of $T$ in $G_{\qpbr}$. We define the \emph{relative Weyl group} as
\begin{align}
  W_0:=N(\qpbr)/T(\qpbr)
\end{align}
and the \emph{Iwahori--Weyl group} (or extended affine Weyl group) as
\begin{align}
  \tilde{W}:=N(\qpbr)/\mathcal{T}(\zpbr),
\end{align}
where $\mathcal{T}$ over $\zpbr$ is the connected Néron model of $T$. There is a short exact sequence (see \cite[Definition 7 of the Appendix]{PappasRapoportAffineFlag})
\begin{align}
  0 \to X_{\ast}(T)_I \to \tilde{W} \to W_0 \to 0,
\end{align}
where $I$ is the inertia group and $X_{\ast}(T)_I$ denotes the inertia coinvariants of the cocharacter lattice $X_{\ast}(T)$ of $T$. The map $X_{\ast}(T)_I \to \tilde{W}$ is denoted on elements by $\lambda \mapsto t^{\lambda}$. Let $\mathbb{S} \subset \tilde{W}$ denote the set of simple reflections in the walls of $\mathfrak{a}$ and let $\tilde{W}_a$ be the subgroup of $\tilde{W}$ generated by $\mathbb{S}$, which we will call the \emph{affine Weyl group}.

Parahoric subgroups $\mathcal{K}$ of $G(\qpbr)$ that contain the Iwahori subgroup corresponding to $\mathfrak{a}$ are called \emph{standard parahoric subgroups}; they correspond to subsets $K \subset \mathbb{S}$ such that the subgroup $W_K$ generated by $K$ is finite; we will call such subsets \emph{types}. This identification is Frobenius equivariant in the sense that $\sigma(\mathcal{K})$ corresponds to $\sigma(K)$. In particular, a subset $K \subset \mathbb{S}$ corresponds to a parahoric subgroup of $G$ if and only if $\sigma(K)=K$; note that our fixed Iwahori subgroup corresponds to $\emptyset \subset \mathbb{S}$. There are parahoric group schemes $\mathcal{G}_K$ over $\zpbr$ associated to types $K$ as above, and we have identifications $\sigma^{\ast} \mathcal{G}_K \simeq \mathcal{G}_{\sigma(K)}$. In particular, if $K$ is stable under $\sigma$, then $\mathcal{G}_K$ is defined over $\mathbb{Z}_p$. The maximal reductive quotient $\mgred$ of the special fiber $\overline{\mG}_K$ of $\mathcal{G}_K$ is a split reductive group over the residue field $k$ of $\qpbr$, and the image of $\mG_{\emptyset}$ in $\mgred$ is a Borel subgroup. The set of simple roots of $\mgred$ with respect to this Borel subgroup can be identified with $K$. The following lemma should be compared with \cite[Remark 4.2.14.b)]{KisinPappas}. 
\begin{Lem} \label{Lem:IwahoriConnected}
Let $J \subset K \subset \mathbb{S}$ and suppose that $\mG_{K}$ is a connected parahoric, then $\mG_{J}$ is a connected parahoric.
\end{Lem}
\begin{proof}
Let $x_K,x_J\in B(G,\qp)$ such that $\mG_{K}=\mathcal{G}_{x_K}$ and $\mG_{J}=\mathcal{G}_{x_J}$. We assume that $x_J$ and $x_K$ are in general position in their respective facets. Then we have $\mathcal{G}_{x_K}=\tilde{\mathcal{G}}_{x_K}$ since $\mathcal{G}$ is a connected parahoric, and we have $\tilde{\mathcal{G}}_{x_J}=\tilde{\mathcal{G}}_{\mathfrak{f}_J}$, where $\mathfrak{f}_J$ is the facet corresponding to $J$. 

Since $x_K$ lies in the closure of $\mathfrak{f}_J$ since $K \supset J$, it follows that $\tilde{\mathcal{G}}_{x_J}(\zpbr)\subset \tilde{\mathcal{G}}_{x_K}(\zpbr)={\mathcal{G}}_{x_K}(\zpbr)$. But ${\mathcal{G}}_{x_K}(\zpbr)$ is contained in the kernel of the Kottwitz map $\kappa:G(\qpbr)\rightarrow \pi_1(G)_I$. Therefore, we have $\tilde{\mathcal{G}}_{x_J}(\zpbr)\subset \ker(\kappa)$ and hence we deduce as in Lemma \ref{Lem:TorsionFreeParahoricConnected} that $\tilde{\mathcal{G}}_{x_J}={\mathcal{G}}_{x_J}$.
\end{proof}

\subsubsection{} \label{Sec:VerySpecialParahoric} A type $K \subset \mathbb{S}$ is called \emph{very special} if $W_K \subset \tilde{W}$ maps isomorphically onto $W_0$. Very special types correspond to very special vertices in $\mathfrak{a}$, see \cite[Lemma 1.3.42, Proposition 1.3.43]{KalethaPrasad}, where they are called extra special vertices. If $K$ is $\sigma$-stable, then the parahoric subgroup $\mathcal{G}_K(\zp)$ associated to a very special type is called a \emph{very special parahoric subgroup}. A fact that will be crucial for us is that there exists a $\sigma$-stable very special type $K$ if $G$ is quasi-split, see \cite[Proposition 10.2.1]{KalethaPrasad}. Thus if $G$ is quasi-split, then the standard Iwahori subgroup $\mathcal{G}_{\emptyset}(\zp)$ contains a very special parahoric subgroup.

\subsubsection{} There is a split short exact sequence (our choice of $\mathfrak{a}$ provides a splitting, see \cite[Lemma 14 of the appendix]{PappasRapoportAffineFlag})
\begin{align} \label{Eq:AffineWeylIwahoriWeyl}
  0 \to \tilde{W}_a \to \tilde{W} \to \pi_1(G)_I \to 0.
\end{align}
The affine Weyl group $\tilde{W}_a$ has the structure of a Coxeter group, and we will use this to define a Bruhat order (denoted by $\le$) and a notion of length on $\tilde{W}$, by splitting \eqref{Eq:AffineWeylIwahoriWeyl} and regarding $\pi_1(G)_I \subset \tilde{W}$ as the subset of length zero elements. We will write $\ell(w)$ for the length of an element of $\tilde{W}$. Similarly, we define a partial order $\le$ and a length function on $\twKK$ by taking minimal length representatives of double cosets. 
\subsubsection{} In this section we will recall some definitions from \cite{Zhu1, XiaoZhu, ShenYuZhang} and state some results. Let the notation be as in Sections \ref{Sec:Parahorics} and \ref{Sec:AffineWeylGroup}, so in particular $G$ denotes a connected reductive group over $\mathbb{Q}_p$. Let $\mG_{K}$ be a parahoric group scheme over $\zpbr$ corresponding to a $\sigma$-stable type $K \subset \mathbb{S}$. For an object $R$ of $\affperfk$ we set
\begin{align}
  D_R=\spec W(R), \qquad D_R^{\ast}=\spec W(R)[1/p],
\end{align}
where $W(R)$ denotes the ring of $p$-typical Witt vectors of $R$. We define group-valued functors on $\affperfk$ sending an object $R$ to
\begin{align}
  LG(R)&:=G(D_R^{\ast}) \\
   \lpGk(R)&:= \mathcal{G}_K(D_R) \\
   L^m \mathcal{G}_K(R)&:=\mathcal{G}_K\left(W(R)/p^m W(R) \right),
\end{align}
which we call the \emph{loop group}, respectively the \emph{positive loop group}, respectively the $m$-\emph{truncated loop group}. It follows from \cite[Section 1.1]{Zhu1} that $L^m \mathcal{G}_K$ and $\lpGk$ are representable in perfect schemes over $k$ and that $\lpGk = \varprojlim_m L^m \mathcal{G}_K$. Moreover, \cite[Proposition 1.1]{Zhu1} tells us that $LG$ is representable by an ind-(perfect scheme), which means that it is isomorphic to an inductive limit of perfect schemes along closed immersions. By \cite[Lemma 1.2.(i)]{Zhu1}, the natural map $\lpGk \to LG$ is a closed immersion. 

\subsubsection{} Fix an algebraic closure $\ovfp$ of $\fp$ and set $k=\ovfp$. Let $R$ be a perfect $k$-algebra and let $\mE$ and $\mF$ be $\mG_{K}$-torsors on $D_R$.\footnote{Here we mean torsor in the \'etale topology on $D_R=\spec W(R)$ in the usual way.} Recall from \cite[Section 3.1.3]{XiaoZhu} that a \emph{modification} $\beta: \mE \dashrightarrow \mF$ is an isomorphism of $G$-torsors
\begin{align}
  \beta: \restr{\mE}{D_R^{\ast}} \to \restr{\mF}{D_R^{\ast}}.
\end{align}
It follows from the proof of \cite[Lemma 1.3]{Zhu1} that there is an \'etale cover $\spec R' \to \spec R$ such that $\mathcal{E}$ is trivial after pullback along $\spec D_{R'} \to \spec D_{R}$. Therefore we can also think of $\mG_{K}$-torsors over $D_R$ as \'etale $L^+\mG_{K}$-torsors over $\spec R$.

We define the (partial) \emph{affine flag variety} $\grg$ to be the functor on $\affperfk$ sending $R$ to the set of isomorphism classes of modifications
\begin{align}
  \alpha:\mE \dashrightarrow \mE^0,
\end{align}
where $\mE$ is a $\mG_{K}$-torsor over $D_R$, and where $\mE^0$ is the trivial $\mG_{K}$-torsor over $D_R$. There is a natural action of $\lG$, thought of as the functor $$R \mapsto \operatorname{Aut}(\restr{\mE^0}{D_R^{\ast}})$$ on $\grg$, by postcomposing $\alpha$ with an automorphism of the restriction to $D_R^{\ast}$ of $\mE^0$, and the orbit of the $k$-point of $\grg$ given by the identity modification $\mE^0 \to \mE^0$ induces a map $O:LG \to \grg$. The map $O$ induces an identification (that we will implicitly use from now on)
\begin{align}
    \grg(k) \simeq G(\qpbr)/\mG_{K}(\zpbr).
\end{align}
It is a result of \cite{Zhu1,BhattScholze} that $\grg$ is representable by an inductive limit of perfections of projective $k$-schemes, with closed immersions as transition maps. In short, $\grg$ is ind-perfectly projective. We also define the \emph{Hecke stack} $\hkg$ to be the presheaf in groupoids on $\affperfk$ sending $R$ to the groupoid of modifications $\beta: \mE \dashrightarrow \mF$. The natural map $\grg \to \hkg$ is an $\lpGk$-torsor for the \'etale topology, where $\lpGk$ acts on $\grg$ via the closed immersion $\lpGk \subset \lG$. 
\subsubsection{} \label{Sec:Shtukas}
Recall from \cite[Definition 4.1.3]{ShenYuZhang} that a (local) $\mathcal{G}_K$-\emph{shtuka} over a perfect $k$-algebra $R$ is a pair $(\mE, \beta)$, where $\mE$ is a $\mG_{K}$-torsor over $D_R$ and where $\beta$ is a modification $\beta:\smE \dashrightarrow \mE$. Here $\sigma:D_R \to D_R$ denotes the Frobenius morphism induced from the absolute Frobenius on $R$, and we consider the restriction of $\sigma^{\ast} \mathcal{E}$ to $D_R^{\ast}$ as a $G$-torsor via the isomorphism $\sigma:\sigma^{\ast} G \to G$, coming from the fact that $G$ is defined over $\mathbb{Q}_p$. A morphism of shtukas $(\mE, \beta) \to (\mE', \beta')$ is an isomorphism $f:\mE \to \mE'$ of $\mG_{K}$-torsors such that the following diagram commutes
\begin{equation}
  \begin{tikzcd}
  \smE \arrow[r, dashed, "\beta"] \arrow{d}{\sigma^{\ast} f} & \mE \arrow{d}{f} \\
  \smE' \arrow[r, dashed, "\beta'"] & \mE'.
  \end{tikzcd}
\end{equation}
We will write $\sht(R)$ for the groupoid of $\mathcal{G}_K$-shtukas over $R$ and $\sht$ for the presheaf in groupoids on $\affperfk$ sending $R$ to $\sht(R)$. 

\begin{Vb}
   Our main examples of shtukas come from $p$-divisible groups. More precisely, for $\mathcal{G}_K=\operatorname{GL}_{n, \mathbb{Z}_p}$, a $\mathcal{G}_K$-shtuka over a perfect ring $R$ is a projective module $M$ of rank $n$ over $W(R)$ together with an isomorphism
\begin{align}
  \beta:\sigma^{\ast}M[1/p] \to M[1/p].
\end{align}
If the map $\beta$ satisfies $pM \subset \beta(\sigma^{\ast}M) \subset M$, then the pair $(M,\beta)$ is a (contravariant) Dieudonn\'e module. By a result of Gabber, see \cite{Lau}, there is a $p$-divisible group over $\spec R$ with contravariant Dieudonn\'e module $(M, \beta)$.
\end{Vb}

\subsubsection{} \label{fibersection}
For an inclusion of types $J \subset K$, there is a natural morphism of parahoric group schemes $\mathcal{G}_J \to \mathcal{G}_K$. The induced morphism on loop groups $L^+\mG_{J} \subset L^+\mG_{K}$ is a closed immersion, since this induced morphism commutes with the natural closed immersions of source and target to $LG$, see \cite[Lemma 1.2.(i)]{Zhu1}.\footnote{Here we are using the cancellation theorem for closed immersions, see e.g. \cite[Theorem 11.1.1]{VakilNotes}.}  

If $J$ and $K$ are $\sigma$-stable, then pushing out torsors along $L^+\mG_{J} \to L^+\mG_{K}$ induces a forgetful map
\begin{align}
  \shtj \to \sht.
\end{align}
In this section we will show that these forgetful maps are representable in perfectly proper algebraic spaces, which is an analogue of \cite[Proposition 8.7]{PappasRapoportAffineFlag}. 

Let $\mgred$ be the maximal reductive quotient of the $1$-truncated loop group $L^1 \mG_{K}=\overline{\mG}_K$ and let $H_J$ be the image of $\mG_{J}$ in $\mgred$; it is a standard parabolic subgroup of type $J \subset K$ (recall that $K$ can be identified with the set of simple roots of $\mgred$ with respect to the Borel $B$ that is the image of $\mG_{\emptyset} \to \mgred$). Recall that for a perfect group scheme $H$ we write $\mathbf{B} H$ for the classifying stack of $H$; in other words, $\mathbf{B}H$ is the groupoid valued functor that sends an object $R$ of $\affperfk$ to the groupoid of $H$ torsors (in the \'etale topology) over $\spec R$. There is a natural morphism $\spec k \to \mathbf{B} H$ corresponding to the trivial $H$-torsor over $\spec k$, which induces an isomorphism $\left[ \spec k /H \right] \to \mathbf{B}H$.
\begin{Lem} \label{representableproper}
The forgetful map $\mathbf{B} \lpGj \to \mathbf{B} \lpGk$ is a $\mgred/H_J$-fibration\footnote{This means that the basechange along $\spec R \to \mathbf{B} \lpGk$ for $R \in \affperfk$ is \'etale locally isomorphic to $\spec R \times \mgred/H_J$.} for the \'etale topology, in particular it is representable in perfectly proper algebraic spaces.
\end{Lem}
\begin{proof}
Let $R$ be a perfect $k$-algebra and let $X$ be an $\lpGk$ torsor over $\spec R$ represented by a map $\spec R \to \mathbf{B} \lpGk$. It follows from the definition of quotient stacks that both squares in the following diagram of stacks are Cartesian
\begin{equation}
  \begin{tikzcd}
  X \arrow{r} \arrow{d} & \spec k \arrow{d} \\
  \lbrack X/\lpGj \rbrack \arrow{r} \arrow{d} & \mathbf{B} \lpGj \arrow{d} \\ 
  \spec R \arrow{r} & \mathbf{B} \lpGk.
  \end{tikzcd}
\end{equation}
By \cite[Lemma 1.3]{Zhu1}, there is an \'etale cover $T \to \spec R$ such that $X_T$ is isomorphic to the trivial $\lpGk$ torsor over $T$, hence $[X/\lpGj]$ is \'etale locally isomorphic to $\spec R \times [\lpGk / \lpGj]$. Therefore it suffices to show that $[\lpGk / \lpGj]$ is representable in a perfectly proper scheme.\footnote{Note that the property of a morphism of pfp algebraic spaces being perfectly proper is \'etale local on the target. This follows from the fact that properness is \'etale local on the target, see \cite[Lemma 02L1]{stacks-project}, in combination with the topological invariance of the \'etale site and \cite[Lemma A.19]{Zhu1}.} We will argue as in the proof of \cite[Proposition 8.7]{PappasRapoportAffineFlag} that there is an isomorphism
\begin{align}
  [\lpGk / \lpGj] \simeq [\mgred/H_{J}]
\end{align}
and the latter is representable in a perfectly proper scheme because it is the perfection of a partial flag variety for $\mgred$. By Lemma \ref{Lem:GroupSchemesCartesian} below, the following 
commutative diagram of perfect group schemes is Cartesian. 
\begin{equation}
    \begin{tikzcd}
    \lpGj \arrow[r,twoheadrightarrow] \arrow[d, hook]& H_J \arrow[d, hook] \\
    \lpGk \arrow[r,twoheadrightarrow, "\varphi"] & \mgred.
    \end{tikzcd}
\end{equation}
It now follows formally that $ [\lpGk / \lpGj] \simeq [\mgred/H_J]$.
\end{proof}
\begin{Cor} \label{Cor:ForgetfulRepresentable}
The map $\shtj \to \sht$ is a $\mgred/H_J$-fibration for the \'etale topology, in particular it is representable in perfectly proper algebraic spaces.
\end{Cor}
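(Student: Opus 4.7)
The plan is to deduce this corollary from Lemma \ref{representableproper} by realising the forgetful map $\shtj \to \sht$ as a base change of $\mathbf{B}\lpG_J \to \mathbf{B}\lpG_K$.

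The first step is to use the quotient presentation from Lemma \ref{quotientdescription}, which identifies $\sht \simeq [\lG/\operatorname{Ad}_\sigma \lpG_K]$ and $\shtj \simeq [\lG/\operatorname{Ad}_\sigma \lpG_J]$. Since the $\sigma$-conjugation action of $\lpG_K$ on $\lG$ restricts to that of $\lpG_J$ under the inclusion $\lpG_J \hookrightarrow \lpG_K$, and this inclusion is itself $\sigma$-equivariant (as both $J$ and $K$ are $\sigma$-stable), the forgetful map is the one induced by this inclusion of quotient data. Next I would construct natural maps $\sht \to \mathbf{B}\lpG_K$ and $\shtj \to \mathbf{B}\lpG_J$ from these quotient stacks to the classifying stacks by remembering only the torsor and forgetting the equivariant map to $\lG$.

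The key step is then to verify that the resulting square
\[
\begin{tikzcd}
\shtj \arrow[r] \arrow[d] & \mathbf{B}\lpG_J \arrow[d] \\
\sht \arrow[r] & \mathbf{B}\lpG_K
\end{tikzcd}
\]
is Cartesian. Unwinding definitions, a point of the fibre product over an object $R$ of $\affperfk$ is a triple consisting of an $\lpG_K$-torsor $P$, an $\lpG_K$-equivariant map $\phi:P \to \lG$, and an $\lpG_J$-reduction $Q$ of $P$, i.e.\ an $\lpG_J$-torsor $Q$ together with an isomorphism $Q \times^{\lpG_J} \lpG_K \simeq P$. By the universal property of induced torsors, specifying an $\lpG_K$-equivariant map $Q \times^{\lpG_J} \lpG_K \to \lG$ is the same as specifying an $\lpG_J$-equivariant map $Q \to \lG$, so this data is canonically identified with a point of $\shtj$.

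Granted the Cartesian square, the corollary follows immediately: Lemma \ref{representableproper} states that the right vertical map is an $\mgred/H_J$-bundle, and this property is preserved by base change, so $\shtj \to \sht$ is an $\mgred/H_J$-bundle as well, and in particular representable by perfectly proper algebraic spaces. The only step with any real content is the Cartesian-ness check, but as sketched above this is a formal consequence of the universal property of induction of torsors along $\lpG_J \hookrightarrow \lpG_K$; I do not expect any genuine obstacle.
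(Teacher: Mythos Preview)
Your proposal is correct and follows essentially the same approach as the paper: both arguments reduce the corollary to showing that the square with $\shtj$, $\sht$, $\mathbf{B}\lpG_J$, $\mathbf{B}\lpG_K$ is Cartesian, and then invoke Lemma~\ref{representableproper} plus stability under base change. The only cosmetic difference is that you phrase the Cartesian-ness check via the quotient presentation of Lemma~\ref{quotientdescription} and the universal property of induced torsors, whereas the paper works directly with the modular description of shtukas as modifications of $\mG_K$-torsors on $D_R$; these are equivalent formulations of the same verification.
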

\begin{proof}
This follows because the following diagram is Cartesian
\begin{equation} \label{Eq:ForgetfulLocalCartesianClassifying}
  \begin{tikzcd}
  \shtj \arrow{r} \arrow{d} & \sht \arrow{d} \\
  \mathbf{B} \lpGj \arrow{r} & \mathbf{B} \lpGk.
  \end{tikzcd}
\end{equation}
Indeed, this is a straightforward consequence of the definitions. [A $\mG_{J}$-shtuka is the same thing as a $\mG_{K}$-shtuka $(\mE, \beta)$ together with an $\lpGj$-torsor $\mE'$ and an isomorphism $\alpha:\mE' \times_{\lpGj} \lpGk \simeq \mE$, because the natural map $\mG_{J} \to \mG_{K}$ is an isomorphism over $\qp$.]
\end{proof}

\subsubsection{Relative position} \label{Sec:RelPos} It follows from the discussion in \cite[Section 3.6]{HZ} that there is an $\lpGk$-equivariant stratification
\begin{align}
  \grg=\bigcup_{w \in \twKK} \grg(w),
\end{align}
where each $\grg(w)$ is a locally closed subscheme of $\grg$, such that on $k$-points we recover the Bruhat--Tits decomposition
\begin{align}
    \grg(k)=G(\qpbr)/\mG_{K}(\zpbr) = \bigcup_{w \in \twKK} \mG_{K}(\zpbr) \dot{w} \mG_{K}(\zpbr)/\mG_{K}(\zpbr),
\end{align}
see \cite[Proposition 8 of the appendix]{PappasRapoportAffineFlag}. We deduce from this that we get a decomposition
\begin{align}
    \left[\lpGk \backslash \grg \right] =: \hkg = \bigcup_{w \in \twKK} \hkg(w),
\end{align}
where $\hkg(w) = \left[\lpGk \backslash \grg(w) \right]$. It follows moreover from \cite[Section 3.6]{HZ} that the closure of $\grg(w)$ is equal to 
\begin{align}
  \grg(\le \! w):=\bigcup_{w' \le w} \grg(w').
\end{align}
Furthermore, when $K=\emptyset$, the Schubert cell $\grg(w)$ is equidimensional of dimensional equal to the length $\ell(w)$ of $w$. This latter statement is proved for a certain Demazure resolution $D_w \to \grg(\le \! w)$ in\cite[Proposition 3.4]{HZ}, and follows for $\grg(\le \! w)$ (and hence $\grg(w)$) since the map $D_w \to \grg(\le \! w)$ is birational as explained in the proof of \cite[Proposition 3.7]{HZ}.
\subsubsection{} \label{Sec:AdmissibleSetLocalModels}
Let $\{\mu\}$ be a $G(\qpbar)$-conjugacy class of cocharacters of $G_{\overline{\mathbb{Q}}_p}$. Recall that we fixed a maximal torus $T$ of $G$ in Section \ref{Sec:AffineWeylGroup}. Choose a Borel $B$ of $G_{\qpbr}$ containing $T_{\qpbr}$ and let $\overline{\mu}$ be the image in $X_{\ast}(T)_I$ of a $B$-dominant representative of $\{\mu\}$. The set of $\{\mu\}$-admissible elements is defined as
\begin{align} \label{Eq:Admu}
  \admu=\{ w \in \tilde{W} \; : \; w \le t^{x(\overline{\mu})} \text{ for some } x \in W_0 \}.
\end{align}
There is a unique element $\tau=\tau_{\mu} \in \admu$ of length zero and in fact $\admu \subset \tilde{W}_a \tau$. For $K$ a $\sigma$-stable type, we define $\admuk$ as the image of $\admu$ under $\tilde{W} \to W_K \backslash \tilde{W} / W_K$. We write $\kadmu$ for $\admu \cap \! {}^{K} \tilde{W}$, where $\! {}^{K} \tilde{W} \subset \tilde{W}$ denotes the subset of elements that are of minimal length in their left $W_K$-coset. \smallskip

If $\{\mu\}$ is minuscule and $K$ is a $\sigma$-stable type, then we define the \emph{perfect local model} attached to $\{\mu\}$ and $K$ to be the perfectly projective closed subscheme of $\grg$ given by
\begin{align}
  \mloc:=\bigcup_{w \in \admuk} \grg(w).
\end{align}
This definition is motivated by the discussion in \cite[Section 2.1.7]{ShenYuZhang} and in particular \cite[Corollary 2.1.11]{ShenYuZhang}. It follows from the discussion in \cite[Section 2.1.7]{ShenYuZhang} that the scheme $\mloc$ is equidimensional of dimension $d=\langle 2 \rho, \overline{\mu} \rangle$, which is precisely the dimension of the flag variety for $G$ associated to $\mu$.

\subsubsection{} Let $\{\mu\}$ be a conjugacy class of cocharacters of $G_{\overline{\mathbb{Q}}_p}$ as above, and let $\admuk$ be the $\mu$-admissible set. Recall that the stack $\hkg$ is the moduli stack of modifications $\mE \dashrightarrow \mF$ of $\lpGk$-torsors. We define a map $\operatorname{Rel}:\sht \to \hkg$ sending $(\mE, \beta)$ to $\beta:\smE \dashrightarrow \mE$. For $w \in \twKK$, we have the locally closed substack $\hkg(w) \subset \hkg$ from Section \ref{Sec:RelPos}, and its pullback along $\operatorname{Rel}$ defines a locally closed substack
\begin{align}
  \sht(w) \subset \sht.
\end{align}
Following \cite[Definition 4.1.3]{ShenYuZhang}, we define the stack of shtukas of level $\mathcal{G}_K$ and type $\mu$ to be
\begin{align}
  \shtmu:=\bigcup_{w \in \admuk} \sht(w);
\end{align}
it is a closed substack of $\sht$ by the discussion in Section \ref{Sec:RelPos}. If $J \subset K$ is another $\sigma$-stable type, then the following diagram commutes by definition of $\admu_J$ and $\admuk$ (but it is generally not Cartesian)
\begin{equation} \label{Eq:Forgetfulmu}
  \begin{tikzcd}
  \shtjmu \arrow[r, hook] \arrow{d} & \shtj \arrow{d} \\
  \shtmu \arrow[r, hook] & \sht.
  \end{tikzcd}
\end{equation}
\begin{prop} \label{Prop:Representable}
  The forgetful morphism $\shtjmu \to \shtmu$ is representable in perfectly proper algebraic spaces. 
\end{prop}
\begin{proof}
We know that $\shtjmu \to \shtj$ is representable in perfectly proper algebraic spaces because it is a closed immersion, and the map $\shtj \to \sht$ is representable in perfectly proper algebraic spaces by Corollary \ref{Cor:ForgetfulRepresentable}. The composition is thus representable in perfectly proper algebraic spaces and factors over $\shtmu$, which proves the result.
\end{proof}

\subsection{Restricted local shtukas and forgetful maps} \label{Sec:RestrictedLocalShtukas}
We will recall some results from \cite[Section 4.2]{ShenYuZhang}. Fix a geometric conjugacy class of minuscule cocharacters $\{\mu\}$ of $G_{\overline{\mathbb{Q}}_p}$ for the rest of this section, and let $\admuk$ be the $\mu$-admissible set. Recall from \cite[Lemma 4.1.4]{ShenYuZhang} that $\sht$ has the following quotient description: Let $\sigma: \lpGk \to \lpGk$ be the relative Frobenius morphism and let $\lpGk$ act on $\lG$ via $h \cdot g = (h^{-1} g \sigma(h))$, we denote this action by $\operatorname{Ad}_{\sigma}$. With this notation, there is an isomorphism\footnote{Here we are taking quotient stacks in the \'etale topology. Note that this shows that $\sht$ is a stack in the \'etale topology. }
\begin{align} \label{Eq:quotientdescription}
  \sht \simeq \left[\frac{\lG}{\operatorname{Ad}_{\sigma} \lpGk} \right].
\end{align}
The map $\Lambda:\lG \to \sht$ constructed this way corresponds to a shtuka over $\lG$: It is the modification $\beta:\mE^0_{\lG} \simeq \sigma^{\ast} \mE^0_{\lG} \dashrightarrow \mE^0_{\lG}$ given by the tautological element in $\lG$. Moreover the map $\lG \to \sht$ is precisely the universal $L^+\mG_{K}$-torsor over $\sht$.

Consider the following commutative diagram
\begin{equation}
    \begin{tikzcd}
    LG \arrow{d}{\Lambda} \arrow{r}{O} & \grg \arrow{d} \\
    \sht \arrow{r}{\operatorname{Rel}} & \hkg.
    \end{tikzcd}
\end{equation}
There is a closed subscheme $\mlocinf \subset \lG$ defined to be the inverse image of $\mloc \subset \grg$ under $O:\lG \to \grg$. Since $\mloc \subset \grg$ is stable under the action of $\lpGk$, it follows that $\mlocinf \subset \lG$ is stable under the $\operatorname{Ad}_{\sigma}$-action of $\lpGk$. The discussion in the previous paragraph, along with the commutative diagram, tells us that there is a natural identification
\begin{align}
  \shtmu \simeq \left[\frac{\mlocinf}{\operatorname{Ad}_{\sigma} \lpGk} \right].
\end{align}
For $J \subset K$ a $\sigma$-stable subset, there is a closed immersion $\mlocjinf \subset \mlocinf$
which identifies
\begin{align} \label{Eq:IdentifiesPreimageLocalModel}
  \frac{\mlocinf}{L^+ \mG_{J}} \subset \gr_J
\end{align}
with the preimage of $\mloc$ under $\gr_J \to \gr_K$.

\subsubsection{} Let $\beta_K: \lpGk \to \mgred$ be the natural map, where $\mgred$ is the maximal reductive quotient of $\overline{\mG}_K=L^1 \mG_{K}$. Define $\mloconered:=\ker \beta_K \backslash \mlocinf$; it is a $\mgred$-torsor over $\mloc$. We then define
\begin{align}
  \shtmu^{(\infty,1)}:=\left[\frac{\mloconered}{\operatorname{Ad}_{\sigma} \lpGk} \right].
\end{align}
It follows from Lemma \ref{Lem:LimitLemma} that the twisted conjugation action of $\lpGk$ on $\mloconered$ factors through the action of $L^m \mG_{K}$ for $m \gg 0$. Indeed, this follows by applying the lemma to the inverse system $\{L^m \mG_K \times \mloconered\}_{m \in \mathbb{Z}}$ and the action map $\lpGk \times \mloconered \to \mloconered$ from the inverse limit to the pfp algebraic space $\mloconered$. For such $m$ we define the stack of $(m)$-restricted shtukas of type $\{\mu\}$ by
\begin{align}
  \shtmu^{(m,1)}:=\left[\frac{\mloconered}{\operatorname{Ad}_{\sigma} L^m \mG_{K}} \right].
\end{align}
Note that there are natural morphisms
\begin{align} \label{Eq:ShtukasToRestrictedShtukas}
  \shtmu=\left[\frac{\mlocinf}{\operatorname{Ad}_{\sigma} \lpGk} \right] \to \left[\frac{\mloconered}{\operatorname{Ad}_{\sigma} \lpGk} \right] \to \left[\frac{\mloconered}{\operatorname{Ad}_{\sigma} L^m \mG_{K}} \right]=\shtmu^{(m,1)}
\end{align}
induced by the natural map $\mlocinf \to \mloconered$ and the natural map $\lpGk \to \lmG$.
\begin{Rem} \label{Rem:ShtukaLocalModelII}
There is a `local model diagram'
\begin{equation}
  \begin{tikzcd}
  & \mloconered \arrow{dr} \arrow{dl} \\
  \shtmu^{(m,1)} & & \mloc.
  \end{tikzcd}
\end{equation}
The left-hand map is an $L^m \mG_{K}$-torsor while the right-hand map is a $\mgred$-torsor. In particular, the stack $\shtmu^{(m,1)}$ is an equidimensional pfp algebraic stack.\footnote{A quotient stack $[X/G]$, where $G$ is a pfp group scheme over $k$ and $X$ is a pfp algebraic space, is defined to be equidimensional if $X$ is equidimensional. The dimension of $[X/G]$ is defined to be $\operatorname{Dim} X - \operatorname{Dim G}$; this is well-defined in view of Lemma \ref{Lem:DimensionResult}.} Indeed $\mloc$ is pfp and equidimensional and since the right-hand map is perfectly smooth of relative dimension $\operatorname{Dim} \mgred$ we find that $\mloconered$ is pfp and equidimensional by Lemma \ref{Lem:Dimension}. 
\end{Rem}

\subsubsection{} \label{Sec:LocalShtukasComparison} The goal of this section is to compare $\shtmu^{(m,1)}$ and $\shtemu^{(m',1)}$.  Unfortunately, there is no natural map between them when $K \not=\emptyset$. However, we will be able to construct a correspondence between them instead, and study its properties, see Proposition \ref{Prop:CartesianInfiniteToFinite}, Lemma \ref{Lem:OneMoreWeaklyPerfectlySmooth} and Section \ref{Sec:GeneralRestrictedShtukas}.

Consider the closed immersion $\lpGe \subset \lpGk$, which induces a closed immersion $B \subset \mgred$, where $B$ is the image of $L^1 \mG_{\emptyset}$ in $\mgred$; let $\gamma:\lpGe \to B$ be the natural surjection. By Lemma \ref{Lem:LimitLemma}, we can choose $m \gg 0$ such that the action $\operatorname{Ad}_{\sigma} \lpGk$ on $\ker \gamma \backslash \mlocinf$ factors through $\lmG$. As in equation \eqref{Eq:ShtukasToRestrictedShtukas}, the natural maps
\begin{align}
    \mlocinf &\to \ker \gamma \backslash \mlocinf \\
    \lpGk &\to \lmG
\end{align}
induce a natural map
\begin{align}
    \shtmu=\left[\frac{\mlocinf}{\operatorname{Ad}_{\sigma} \lpGk} \right] \to \left[ \frac{ \ker \gamma \backslash \mlocinf}{ \operatorname{Ad}_{\sigma} \lmG} \right].
\end{align}
For $m$ as above, let $H_m$ be the image of $\lpGe$ in $\lmG$. Since $\mloceinf \subset \mlocinf$, it follows that the action of $\lpGe \subset \lpGk$ on $\ker \gamma \backslash \mloceinf$ factors through $H_m$. Therefore there is a natural map
\begin{align}
    \shtemu=\left[\frac{\mloceinf}{\operatorname{Ad}_{\sigma} \lpGe} \right] \to \left[ \frac{ \ker \gamma \backslash \mloceinf}{ \operatorname{Ad}_{\sigma} H_m} \right]
\end{align}
induced by $\mloceinf \to \ker \gamma \backslash \mloceinf$ and $\lpGe \to H_m$.
\begin{Prop} \label{Prop:CartesianInfiniteToFinite}
If $m$ is an integer such that the action $\operatorname{Ad}_{\sigma} \lpGk$ on $\ker \gamma \backslash \mlocinf$ factors through $\lmG$, then the diagram
\begin{equation}
\begin{tikzcd}
    \shtemu \arrow{r} \arrow{d} & \left[ \frac{ \ker \gamma \backslash \mloceinf}{ \operatorname{Ad}_{\sigma} H_m} \right] \arrow{d} \\
    \shtmu \arrow{r} & \left[ \frac{ \ker \gamma \backslash \mlocinf}{ \operatorname{Ad}_{\sigma} \lmG} \right],
\end{tikzcd}    
\end{equation}
where the right vertical map is induced by the closed immersions $\mloceinf \xhookrightarrow{} \mlocinf$ and $H_m \xhookrightarrow{} \lmG$, is Cartesian. 
\end{Prop}
We start by proving a lemma.
\begin{Lem} \label{Lem:GroupSchemesCartesian}
Both squares in the following diagram of perfect group schemes are Cartesian.
\begin{equation} \label{Eq:GroupSchemesCartesian}
  \begin{tikzcd}
  \lpGe \arrow[d, hook] \arrow[r, twoheadrightarrow] & H_m \arrow{r} \arrow[d, hook] & B \arrow[d, hook] \\
  \lpGk \arrow[r, twoheadrightarrow] & \lmG \arrow{r} & \mgred.
  \end{tikzcd}
\end{equation}
\end{Lem}
\begin{proof}
We first check that the outer square is Cartesian: It is enough to check this on $k'$-points for all algebraically closed fields $k'$ because $\lpGe \to \lpGk$ is a closed immersion by \cite[Lemma 1.2.(i)]{Zhu1} and perfect schemes are reduced. The result on the level of $k'$-points is \cite[Theorem 4.6.33]{BT2}.

The left square is Cartesian by definition of $H_m$ and it therefore follows from general properties of Cartesian squares that the right square is also Cartesian. 
\end{proof}
\begin{Lem} \label{Lem:DimensionStacks}
The stacks 
\begin{align}
    \left[ \frac{ \ker \gamma \backslash \mloceinf}{ \operatorname{Ad}_{\sigma} H_m} \right] \text{ and } \left[ \frac{ \ker \gamma \backslash \mlocinf}{ \operatorname{Ad}_{\sigma} \lmG} \right]
\end{align}
are equidimensional of the same dimension.
\end{Lem}
\begin{proof}
To compute the dimensions we note that it follows from the right Cartesian square in Lemma \ref{Lem:GroupSchemesCartesian} that
\begin{align}
    \operatorname{Dim} H_m = \operatorname{Dim} \lmG - (\operatorname{Dim} \mgred - \operatorname{Dim} B)
\end{align}
and thus it suffices to show that
\begin{align}
    \operatorname{Dim} \left(\ker \gamma \backslash \mloceinf\right) = \operatorname{Dim} \left(\ker \gamma \backslash \mlocinf\right) - \operatorname{Dim} \mgred + \operatorname{Dim} B.
\end{align}
The map $\ker \gamma \backslash \mloceinf \to \mloce$ is a $B$-torsor by construction and $\ker \gamma \backslash \mlocinf \to \mloc$ is a $\mgred$-torsor by construction, see \eqref{Eq:GroupSchemesCartesian}. Therefore the equality above is equivalent to the equality
\begin{align}
    \operatorname{Dim} \mloc = \operatorname{Dim} \mloce,
\end{align}
which is true, see Section \ref{Sec:AdmissibleSetLocalModels}.
\end{proof}
\begin{proof}[Proof of Proposition \ref{Prop:CartesianInfiniteToFinite}]
Consider the following diagram, where the maps are defined as in \eqref{Eq:ShtukasToRestrictedShtukas}
\begin{equation} \label{Eq:BigDiagramOne}
  \begin{tikzcd}
  \shtemu \arrow[r, equals] \arrow{d} &\left[\frac{\mloceinf}{\operatorname{Ad}_{\sigma} \lpGe} \right] \arrow{d} \arrow{r} & \left[ \frac{ \ker \gamma \backslash \mloceinf}{ \operatorname{Ad}_{\sigma} \lpGe} \right] \arrow{d} \arrow{r} & \left[ \frac{ \ker \gamma \backslash \mloceinf}{ \operatorname{Ad}_{\sigma} H_m} \right] \arrow{d} \\
  \shtmu \arrow[r, equals] &\left[\frac{\mlocinf}{\operatorname{Ad}_{\sigma} \lpGk} \right] \arrow{r} & \left[\frac{\ker \gamma \backslash \mlocinf}{\operatorname{Ad}_{\sigma} \lpGk} \right] \arrow{r} & \left[ \frac{ \ker \gamma \backslash \mlocinf}{ \operatorname{Ad}_{\sigma} \lmG} \right].
  \end{tikzcd}
\end{equation}
It follows from Lemma \ref{Lem:GroupSchemesCartesian} that $P^m:=\operatorname{Ker} \left(\lpGk \to \lmG \right)$ is contained in $\lpGe$ and that $P^m$ is also equal to the kernel of $\lpGe \to H_m$. 

We deduce that the right horizontal maps are both $P^m$ gerbes. Therefore the map from the top-left term of the right-most square to the fiber product is a morphism of $P^m$-gerbes, and thus an isomorphism. Similarly, the middle horizontal maps are both $\ker \gamma$-torsors. Therefore the map from the top-left term of the middle square to the fiber product is a morphism of $\ker \gamma$-torsors and thus an isomorphism. We deduce that the outer square of the diagram is Cartesian.
\end{proof}
\subsubsection{} \label{Sec:LocalShtukasToWeirdLocalShtukas}
In this section we record two more lemmas.
\begin{Lem} \label{Lem:Cofinal}
For each integer $m' \ge 1$ there is an integer $m \gg m'$ such that there is an inclusion 
$\ker \left(\lpGe \to H_{m}\right) \subset \ker \left(\lpGe \to L^{m'}\mG_{\emptyset} \right)$ of closed subschemes of $\lpGe$.
\end{Lem}
\begin{proof}
Fix $m'$. Recall that 
\begin{align}
    \lpGk &\simeq \varprojlim_m \lmG \\
    \lpGe &\simeq \varprojlim_m \lmGe,
\end{align}
and the first of these equalities moreover implies that $\lpGe \simeq \varprojlim_m H_m$. The lemma now follows from Lemma \ref{Lem:LimitLemma}.
\end{proof}
It follows from Lemma \ref{Lem:Cofinal} that for each $m'$ there is an $m \gg m'$ such that the natural map $\lpGe \to L^{m'} \mG_{\emptyset}$ factors through the natural map $\lpGe \to H_m$ via a surjection $H_m \to L^{m'} \mG_{\emptyset}$. Note moreover that $\lpGe \to L^1 \mG_{\emptyset}=\overline{\mG}_{\emptyset} \to \mgrede$ factors through $\lpGe \to L^1 \mG_{\emptyset} \to B$ because the maximal reductive quotient of $\overline{\mG}_{\emptyset}$ is isomorphic to the maximal reductive quotient of $B$. Thus there is a natural map $\ker \gamma \to \ker (\lpGe \to \mgrede)$ which induces a map $\mloceone \to \mloceonered$. Recall moreover that for $m \gg 0$ the action $\operatorname{Ad}_{\sigma} \lpGe$ on $\mloceonered$ factors through an action of $L^{m'}\mG_{\emptyset}$.
\begin{Lem} \label{Lem:OneMoreWeaklyPerfectlySmooth}
Let $m' \gg 0$ be a positive integer and let $m \gg m'$ satisfy the conclusion of Lemma \ref{Lem:Cofinal}. Then the map (induced by $\mloceone \to \mloceonered$ and $H_m \to L^{m'}\mG_{\emptyset}$)
\begin{align}
    \left[\frac{\mloceone}{\operatorname{Ad}_{\sigma} H_m} \right] \to \left[\frac{\mloceonered}{\operatorname{Ad}_{\sigma} L^{m'}\mG_{\emptyset}} \right] = \shtemu^{(m',1)}
\end{align}
is weakly perfectly smooth.
\end{Lem}
\begin{proof}
The natural map 
\begin{align}
  \left[\frac{\mloceone}{\operatorname{Ad}_{\sigma} H_m} \right] \to \left[\frac{\mloceonered}{\operatorname{Ad}_{\sigma} H_m} \right] = \shtemu^{(m',1)}
\end{align}
is a torsor for $\ker(B \to \mgrede)$ and hence weakly perfectly smooth. The natural map 
\begin{align}
  \left[\frac{\mloceonered}{\operatorname{Ad}_{\sigma} H_m} \right] \to \left[\frac{\mloceonered}{\operatorname{Ad}_{\sigma} L^{m'}\mG_{\emptyset}} \right]
\end{align}
is a gerbe for $\ker(H_m \to L^{m'}\mG_{\emptyset})$ and is thus weakly perfectly smooth. It follows that the composition is weakly perfectly smooth, and the lemma is proved.
\end{proof}
\subsubsection{} \label{Sec:GeneralRestrictedShtukas}
The stack $\left[ \frac{ \ker \gamma \backslash \mlocinf}{ \operatorname{Ad}_{\sigma} \lmG} \right]$ is not a stack of restricted shtukas in the sense of Shen--Yu--Zhang \cite{ShenYuZhang}. However, it is closely related to the more general stacks of restricted shtukas introduced in \cite[Section 5.3]{XiaoZhu}. We define for $n \ge 2$ the quotient
\begin{align}
  \mlocn:=\ker \left( \lpGk \to L^n \mG_{K} \right) \backslash \mlocinf.
\end{align}
Then by Lemma \ref{Lem:LimitLemma}, for $m \gg n$ the action $\operatorname{Ad}_{\sigma} \lpGk$ on $\mlocn$ will factor through $\lmG$ and we define 
\begin{align}
  \shtmu^{(m,n),\mathrm{loc}}:=\left[\frac{\mlocn}{\operatorname{Ad}_{\sigma} \lmG} \right].
\end{align}
We have added the `loc' in the superscript and the condition that $n \ge 2$ so that these are not confused with the previously introduced stacks of restricted shtukas (since the notation is \emph{not} compatible). \smallskip

The proof of Lemma \ref{Lem:Cofinal} shows that for $n \gg 0$ we have an inclusion $\ker \left( \lpGk \to L^n \mG_{K} \right) \subset \ker \gamma$ and thus a natural map
\begin{align}
    \mlocn & \to \ker \gamma \backslash \mlocinf.
\end{align}
This induces a morphism (for $m \gg n$ as before)
\begin{align}
  \shtmu^{(m,n), \mathrm{loc}} = \left[\frac{\mlocn}{\operatorname{Ad}_{\sigma} \lmG} \right] \to \left[\frac{\ker \gamma \backslash \mlocinf}{\operatorname{Ad}_{\sigma} \lmG} \right],
\end{align}
which is a torsor for the image of $\ker \gamma$ in $L^n \mG_K$, and thus perfectly smooth.

\subsubsection{The EKOR stratification} \label{Sec:EKOR}
Recall that $\kadmu$ is the intersection of $\admu$ with $^{K} \tilde{W}$, where $^{K} \tilde{W} \subset \tilde{W}$ denotes the subset of elements that are of minimal length in their left $W_K$-coset. By \cite[Lemma 4.2.4]{ShenYuZhang}, the underlying topological space of $\shtmu^{(m,1)}$ is isomorphic to $\kadmu$ equipped with the partial order topology (for the partial order $\preceq$ on $\kadmu$ introduced in \cite[page 3123]{ShenYuZhang}). They use this to define locally closed substacks $\shtmu^{(m,1)}\{w\}$ for $w \in \kadmu$ such that the locally closed substack
\begin{align}
  \shtmu^{(m,1)}\{\preceq w\} := \bigcup_{w' \preceq w} \shtmu^{(m,1)}\{w'\}.
\end{align}
is closed. This allows us to define the Ekedahl--Kottwitz--Oort--Rapoport (EKOR) stratification on any stack mapping to $\shtmu^{(m,1)}$; for example on $\shtmu$ via \eqref{Eq:ShtukasToRestrictedShtukas} and later on Shimura varieties of Hodge type. Note that if $K=\emptyset$ then the EKOR stratification agrees with the Kottwitz--Rapoport (KR) stratification from Section \ref{Sec:AdmissibleSetLocalModels} and $\preceq$ agrees with $\le$. This follows from \cite[Section 1.3.2]{ShenYuZhang} and the discussion preceding \cite[Proposition 4.2.5]{ShenYuZhang}.

\subsection{Affine Deligne--Lusztig varieties} \label{Sec:SigmaConjugacyClasses}
Recall from \cite[Section 2.3]{RapoportRichartz} that there is a partial order on the set $B(G)$ of $\sigma$-conjugacy classes in $G(\qpbr)$. Let $\{\mu\}$ be a $G(\qpbar)$-conjugacy class of cocharacters of $G_{\qpbar}$ and let $B(G, \{\mu\}) \subset B(G)$ be the set of \emph{neutral acceptable} $\sigma$-conjugacy classes with respect to $\{\mu\}$, see \cite[Definition 2.5]{RapoportViehmann}. 
\subsubsection{}
Let $\mE$ be an $\lG$-torsor over $k'$, with $k'$ an algebraically closed field of characteristic $p$, and let $\beta:\smE \to \mE$ be an isomorphism where $\sigma$ is the absolute Frobenius. After choosing a trivialisation of $\mE$, we see that $\beta$ can be represented by an element $b_{\beta} \in G(W(k')[1/p])$ well-defined up to $\sigma$-conjugacy. Since the set of $\sigma$-conjugacy classes in $G(W(k')[1/p])$ does not depend on the choice of algebraically closed field $k'$, it thus gives us an element $[b_{\beta}] \in B(G)$.

Let $R$ be a perfect $k$-algebra, let $\mE$ be an $\lG$-torsor over $R$ and let $\beta:\smE \to \mE$ be an isomorphism. If $x \in \spec R$ and $K$ is an algebraic closure of the residue field $k(x)$, then we will write $[b_{\beta}(x)] \in B(G)$ for the $\sigma$-conjugacy class of the pullback of $(\mathcal{E}, \beta)$ along $\spec K \to \spec R$. Then for $[b] \in B(G)$, the subset (using the partial order introduced above)
\begin{align}
  (\spec R)_{\le [b]}:=\{ x \in \spec R \: : \: [b_{\beta}(x)] \le [b] \}
\end{align}
is closed in $\spec R$ by \cite[Theorem 3.6.(ii)]{RapoportRichartz} and
\begin{align}
  (\spec R)_{[b]}:=\{ x \in \spec R \: : \: [b_{\beta}(x)] = [b] \}
\end{align}
is locally closed. 

\subsubsection{} Given $\spec R \to \sht$ corresponding to a $\mathcal{G}_K$-shtuka $(\mathcal{E}', \beta)$, we can set $\mathcal{E}$ to be the pushout of $\mathcal{E}'$ along $\lpGk \to \lG$ to obtain a pair $(\mathcal{E}, \beta)$ as above. Then we may form the locally closed subsets $(\spec R)_{[b]} \subset \spec R$ as above. This allows us to define a stratification
\begin{align}
  \sht:=\bigcup_{[b] \in B(G)} \shtb,
\end{align}
where $\shtb$ denotes the locally closed substack of $\sht$ whose $R$-points are given by the full subgroupoid
\begin{align}
    \shtb(R) \subset \sht(R)
\end{align}
of maps $\spec R \to \sht$ such that $(\spec R)_{[b]}=\spec R$. We will write $\shtmub$ for the intersection (fiber product over $\sht$) of $\shtmu$ and $\shtb$; we will see in Corollary \ref{Cor:NonemptyNewtonStrata} that this is non-empty if and only if $[b] \in B(G, \{\mu\})$.
\subsubsection{} \label{Sec:ADLVSet}
Let $K \subset \mathbb{S}$ be a $\sigma$-stable type and let $b \in G(\qpbr)$. Then we define the affine Deligne--Lusztig set
\begin{align}
  \xmub=\{g \in G(\qpbr)/\mathcal{G}_K(\zpbr) \; | \; g^{-1} b \sigma(g) \in \bigcup_{w \in \admuk} \mathcal{G}_K(\zpbr) \dot{w} \mathcal{G}_K(\zpbr)/ \mathcal{G}_K(\zpbr)\}.
\end{align}
Let $J_b$ be the algebraic group over $\mathbb{Q}_p$ whose $R$-points are given by
\begin{align}
  J_b(R)=\{g \in G(\qpbr \otimes_{\mathbb{Q}_p} R) \: | \; g^{-1} b \sigma (g) = b\}.
\end{align}
Then $J_b(\qp) \subset G(\qpbr)$ acts on $\xmub$ via left multiplication. By \cite[Theorem 1.1]{He2}, the set $\xmub$ is non-empty if and only if $[b] \in B(G, \{\mu\})$. Moreover \cite[Theorem 1.1]{He2} says that for $J \subset K$ another $\sigma$-stable type, the natural projection $G(\qpbr)/\mathcal{G}_{J}(\zpbr) \to G(\qpbr)/\mathcal{G}_{K}(\zpbr)$ induces a $J_b(\mathbb{Q}_p)$-equivariant surjection
\begin{align}
  \xmubj \to \xmub.
\end{align}
We will soon see that $\xmub$ can be identified with the set of $k$-points of a perfect scheme over $k$, which we will also denote by $\xmub$.
\subsubsection{} 
Let $K$ be a $\sigma$-stable type, let $b \in G(\qpbr)$ and consider the functor $\xmub'$ on $\affperfk$ sending $R$ to the set of isomorphism classes of commutative diagrams of modifications of $\mathcal{G}_K$-torsors on $D_R$
\begin{equation} \label{Eq:ADLVMOD}
  \begin{tikzcd}
  \smE_1 \arrow[r, dashrightarrow, "\beta_1"] \arrow[d, dashrightarrow, "\sigma^{\ast}\beta_0"] & \mE_1 \arrow[d, dashrightarrow, "\beta_0"] \\
  \smE^0 \arrow[r, dashrightarrow, "b"] & \mE^0,
  \end{tikzcd}
\end{equation}
such that $\beta_1:\smE_1 \dashrightarrow \mE_1$, considered as an element of $\hkg(R)$, lies in $\left(\bigcup_{w \in \admuk} \hkg(w)\right)(R)$. Here $b$ is the modification of the trivial $G_K$-torsor $\smE^0 \simeq \mE^0$ given by multiplication by $b$. We will sometimes refer to $\beta_0$ as a \emph{quasi-isogeny} of shtukas from $(\mE_1, \beta_1) \to (\mE^0, b)$. 
\begin{Lem} \label{Lem:ADLVLemma}
The morphism $\xmub' \to \grg$ that sends a diagram as in \eqref{Eq:ADLVMOD} to $\beta_0:\mE_1 \dashrightarrow \mE^0$, is a closed immersion. Moreover it identifies
\begin{align}
  \xmub'(k) \subset \grg(k) = G(\qpbr)/\mathcal{G}_K(\zpbr)
\end{align}
with the affine Deligne--Lusztig set $\xmub$ from Section \ref{Sec:ADLVSet}.
\end{Lem}
\begin{proof}
Consider the functor $X(b)$ sending $R$ to the set of isomorphism classes of commutative diagrams of modifications of $\mathcal{G}_K$-torsors on $D_R$
\begin{equation} \label{Eq:ADLVMOD2}
  \begin{tikzcd}
  \smE_1 \arrow[r, dashrightarrow, "\beta_1"] \arrow[d, dashrightarrow, "\sigma^{\ast}\beta_0"] & \mE_1 \arrow[d, dashrightarrow, "\beta_0"] \\
  \smE^0 \arrow[r, dashrightarrow, "b"] & \mE^0
  \end{tikzcd}
\end{equation}
as before, but now \emph{without} the condition that $\beta_1 \in \left(\bigcup_{w \in \admuk} \hkg(w)\right)(R)$. As before, \cite[the discussion after Remark 3.5]{HZ} tells us that $\xmub'$ is a closed subfunctor of $X(b)$, and the lemma would follow if we could show that the map
\begin{align}
  f:X(b) \to \grg
\end{align}
sending a diagram as in \eqref{Eq:ADLVMOD2} to $\beta_0:\mE_1 \dashrightarrow \mE^0$ is an isomorphism. The map $f$ is an isomorphism because the map $g:\grg \to X(b)$ sending $\beta_0:\mE_1 \dashrightarrow \mE^0$ to the diagram
\begin{equation}
  \begin{tikzcd}
  \smE_1 \arrow[r, dashrightarrow, "\beta_1"] \arrow[d, dashrightarrow, "\sigma^{\ast}\beta_0"] & \mE_1 \arrow[d, dashrightarrow, "\beta_0"] \\
  \smE^0 \arrow[r, dashrightarrow, "b"] & \mE^0.
  \end{tikzcd}
\end{equation}
with $\beta_1=\beta_0^{-1} b \sigma^{\ast} \beta_0$ is an inverse to $f$. We see that $\xmub'(k)$ is cut out from $X(b)(k)= G(\qpbr)/\mathcal{G}_K(\zpbr)$ by the condition that $\beta_1 \in \bigcup_{w \in \admuk} \hkg(w)(k)$, in other words, that
\begin{align}
  \beta_0^{-1} b \sigma^{\ast} \beta_0 \in \bigcup_{w \in \admu} \mathcal{G}_K(\zpbr) \dot{w} \mathcal{G}_K(\zpbr)/\mathcal{G}_K(\zpbr).
\end{align}
This is precisely the condition cutting out $\xmub \subset G(\qpbr)/\mathcal{G}_K(\zpbr)$, and so we are done.
\end{proof}
From now on, we will write $\xmub$ for $\xmub'$ by abuse of notation. It follows from \cite[Lemma 1.1]{HamacherViehmann} and \cite[Corollary 2.5.3]{ZhouZhu} that $\xmub$ is actually a perfect scheme that is perfectly locally of finite type. 

If $b'$ is $\sigma$-conjugate to $b$, that is if $b' = g^{-1} b \sigma(g)$ with $g \in G(\qpbr)$, then $\xmub \simeq X(\mu,b')_K$ via the map
\begin{equation}
  \begin{tikzcd}
  \smE_1 \arrow[r, dashrightarrow, "\beta_1"] \arrow[d, dashrightarrow, "\sigma^{\ast}\beta_0"] & \mE_1 \arrow[d, dashrightarrow, "\beta_0"] \\
  \smE^0 \arrow[r, dashrightarrow, "b"] & \mE^0.
  \end{tikzcd} \mapsto 
  \begin{tikzcd}
  \smE_1 \arrow[r, dashrightarrow, "\beta_1"] \arrow[d, dashrightarrow, "\sigma(g)\sigma^{\ast}\beta_0"] & \mE_1 \arrow[d, dashrightarrow, "g \beta_0"] \\
  \smE^0 \arrow[r, dashrightarrow, "b'"] & \mE^0.
  \end{tikzcd} 
\end{equation}
We note that this map is nothing more than the action of $g \in \lG(k)$ on $\xmub \subset \grg$ via the natural left action of $\lG$ on $\grg$. For $b'=b$ this induces an action of the closed subgroup $F_b \subset \lG$ on $\xmub$, where $F_b$ is defined as the subfunctor of $\lG$ sending a perfect $\mathbb{F}_p$-algebra $R$ to the group
\begin{align}
  F_b(R)=\{g \in \lG(R) \; | \; g^{-1} b \sigma(g) = b \}.
\end{align}
The $k$-points of $F_b$ are in bijection with $J_b(\mathbb{Q}_p)$, where $J_b$ is the algebraic group over $\mathbb{Q}_p$ introduced in Section \ref{Sec:ADLVSet}. Recall the notion of a pro-\'etale cover of a scheme, see \cite[Definition 1.2]{BhattScholzeII}.
\begin{Lem} \label{Lem:LocalUniformisation}
Consider the morphism $\Theta_b:\xmub \to \shtmub$, which sends a diagram as in \eqref{Eq:ADLVMOD} to $(\mE_1, \beta_1)$. This morphism is $F_b$-invariant for the trivial action on the target, and induces an isomorphism of groupoids
\begin{align}
  \shtmub \simeq \left[F_b \backslash \xmub \right],
\end{align}
where the quotient stack is taken in the pro-\'etale topology. Moreover $F_b$ is isomorphic to the locally profinite group scheme $\underline{J_b(\mathbb{Q}_p)}$ associated to the topological group $J_b(\mathbb{Q}_p)$.\footnote{For a topological group $B$ we define $\underline{B}$ as the functor on $\affperfk$ sending $R$ to the group of continuous functions $|\spec R| \to B$, where $|\spec R|$ is the underlying topological space of $\spec R$. When $B$ is profinite this is representable in an affine group scheme, and thus when $B$ is locally profinite it is representable in a group scheme.}
\end{Lem}
\begin{proof}
The morphism $\Theta_b$ is $F_b$-invariant, since the action of $F_b$ on $\xmub$ doesn't change $(\mE_1, \beta_1)$. For a scheme $T \mapsto \shtmub$, the set $\xmub(T)$ is the set of quasi-isogenies from $(\mE_1, \beta_1)$ to $(\mE^0_T, b_T)$, which is either empty or has a simply transitive action of the group $F_b(T)$ of self quasi-isogenies of $(\mE^0_T, b_T)$. In other words, we have shown that $\Theta_b$ is a pseudo-torsor for $F_b$. By \cite[Theorem I.2.1]{FarguesScholze}, for any $\mG_{K}$-shtuka $(\mE_1, \beta_1) \in \shtb(T)$, the pseudo-torsor of quasi-isogenies to $(\mE^0_T, b_T)$ has a section pro-\'etale locally on $T$. Thus we find that map $\Theta_b$ is a pro-\'etale torsor for $F_b$. In other words, there is an isomorphism
\begin{align} \label{Eq:StacksFPQCTrivial}
  \shtmub \simeq \left[F_b \backslash \xmub \right].
\end{align}
It also follows from \cite[Theorem I.2.1]{FarguesScholze} that $F_b$ is isomorphic to the locally profinite group scheme $\underline{J_b(\mathbb{Q}_p)}$ associated to $J_b(\mathbb{Q}_p)$.
\end{proof}
\begin{Cor} \label{Cor:NonemptyNewtonStrata}
The stack $\shtmub$ is non-empty if and only if $[b] \in \bgmu$.
\end{Cor}
\begin{proof}
This is a direct consequence of Lemma \ref{Lem:LocalUniformisation} in combination with the analogous result for $\xmub(\ovfp)$, which is \cite[Theorem 1.1]{He2}.
\end{proof}
%

	\newcommand{\Z}{\ensuremath{\mathbb{Z}}}
	\newcommand{\uZ}{\ensuremath{\underline{\mathbb{Z}}}}
	\newcommand{\D}{\ensuremath{\mathbb{D}}}
	\newcommand{\uQ}{\ensuremath{\underline{\mathbb{Q}}}}
	\newcommand{\A}{\ensuremath{\mathbb{A}}}
	\newcommand{\real}{\ensuremath{\mathbb{R}}}
	\newcommand{\rhat}{\ensuremath{\hat{\mathscr{R}}}}
	\newcommand{\F}{\ensuremath{\mathbb{F}}}
	\newcommand{\Fpbar}{\ensuremath{\overline{\mathbb{F}}_p}}
	\newcommand{\aff}{\ensuremath{X(\mu,b)}}
	\newcommand{\der}{\ensuremath{\mathrm{der}}}
	\newcommand{\ab}{\ensuremath{\mathrm{ab}}}

	\newcommand{\Sh}{\ensuremath{\mathrm{Sh}}}
	\newcommand{\Q}{\ensuremath{\mathbb{Q}}}
	\newcommand{\R}{\ensuremath{\mathbb{R}}}
	\newcommand{\xbar}{\ensuremath{{\overline{x}}}}
	\newcommand{\Khat}{\ensuremath{\widehat{K}}}
	\newcommand{\Ok}{\ensuremath{\mathcal{O}}}
	\renewcommand{\O}{\ensuremath{\mathcal{O}}}
	
	\newcommand{\G}{\ensuremath{\mathcal{G}}}
	\newcommand{\GG}{\ensuremath{\mathbb{G}}}
	\newcommand{\uG}{\ensuremath{\underline{G}}}
	\newcommand{\ucalG}{\ensuremath{\underline{\mathcal{G}}}}
	\newcommand{\pdiv}{\ensuremath{\mathscr{G}}}
	\newcommand{\Spec}{\ensuremath{{\mathrm{Spec}}}}
	\newcommand{\Spf}{\ensuremath{\mathrm{Spf}}}
	\newcommand{\Res}{\ensuremath{\mathrm{Res}}}
	\newcommand{\GL}{\ensuremath{\mathrm{GL}}}
	\newcommand{\bGL}{\ensuremath{\mathbf{GL}}}
	\newcommand{\GSp}{\ensuremath{\mathrm{GSp}}}
	\newcommand{\bGSp}{\ensuremath{\mathbf{GSp}}}
	\newcommand{\SL}{\ensuremath{\mathrm{SL}}}
	\newcommand{\SU}{\ensuremath{\mathrm{SU}}}
	\newcommand{\cGL}{\ensuremath{\mathcal{GL}}}
	\newcommand{\ucGL}{\ensuremath{\underline{\mathcal{GL}}}}
	\newcommand{\Gr}{\ensuremath{\mathrm{Gr}}}
	\newcommand{\Fl}{\ensuremath{\mathrm{Fl}}}
	
	\newcommand{\set}{\ensuremath{s_{\alpha,\mathrm{\acute{e}t}}}}
	\newcommand{\s}{\ensuremath{\tilde{s}}}
	\newcommand{\kbar}{\ensuremath{\overline{k}}}
	\newcommand{\kot}{\ensuremath{\tilde{\kappa}_G}}
	\newcommand{\p}{\ensuremath{\mathfrak{p}}}
	
	\newcommand{\loc}{\ensuremath{M^{\mathrm{loc}}_{\G}}}
	\newcommand{\lochat}{\ensuremath{\hat{M}^{\mathrm{loc}}_G}}
	\newcommand{\E}{\ensuremath{\mathbf{E}}}
	
	\newcommand{\hW}{\ensuremath{\widehat{W}}}
	\newcommand{\Adm}{\ensuremath{\mathrm{Adm}}}
	
	\newcommand{\fka}{\ensuremath{\mathfrak{a}}}
	\newcommand{\fkb}{\ensuremath{\mathfrak{b}}}
	\newcommand{\fkc}{\ensuremath{\mathfrak{c}}}
	\newcommand{\fkd}{\ensuremath{\mathfrak{d}}}
	\newcommand{\fke}{\ensuremath{\mathfrak{e}}}
	\newcommand{\fkf}{\ensuremath{\mathfrak{f}}}
	\newcommand{\fkg}{\ensuremath{\mathfrak{g}}}
	\newcommand{\fkh}{\ensuremath{\mathfrak{h}}}
	\newcommand{\fki}{\ensuremath{\mathfrak{i}}}
	\newcommand{\fkj}{\ensuremath{\mathfrak{j}}}
	\newcommand{\fkk}{\ensuremath{\mathfrak{k}}}
	\newcommand{\fkl}{\ensuremath{\mathfrak{l}}}
	\newcommand{\fkm}{\ensuremath{\mathfrak{m}}}
	\newcommand{\fkn}{\ensuremath{\mathfrak{n}}}
	\newcommand{\fko}{\ensuremath{\mathfrak{o}}}
	\newcommand{\fkp}{\ensuremath{\mathfrak{p}}}
	\newcommand{\fkq}{\ensuremath{\mathfrak{q}}}
	\newcommand{\fkr}{\ensuremath{\mathfrak{r}}}
	\newcommand{\fks}{\ensuremath{\mathfrak{s}}}
	\newcommand{\fkt}{\ensuremath{\mathfrak{t}}}
	\newcommand{\fku}{\ensuremath{\mathfrak{u}}}
	\newcommand{\fkv}{\ensuremath{\mathfrak{v}}}
	\newcommand{\fkw}{\ensuremath{\mathfrak{w}}}
	\newcommand{\fkx}{\ensuremath{\mathfrak{x}}}
	\newcommand{\fky}{\ensuremath{\mathfrak{y}}}
	\newcommand{\fkz}{\ensuremath{\mathfrak{z}}}
	
	\newcommand{\fkA}{\ensuremath{\mathfrak{A}}}
	\newcommand{\fkB}{\ensuremath{\mathfrak{B}}}
	\newcommand{\fkC}{\ensuremath{\mathfrak{C}}}
	\newcommand{\fkD}{\ensuremath{\mathfrak{D}}}
	\newcommand{\fkE}{\ensuremath{\mathfrak{E}}}
	\newcommand{\fkF}{\ensuremath{\mathfrak{F}}}
	\newcommand{\fkG}{\ensuremath{\mathfrak{G}}}
	\newcommand{\fkH}{\ensuremath{\mathfrak{H}}}
	\newcommand{\fkI}{\ensuremath{\mathfrak{I}}}
	\newcommand{\fkJ}{\ensuremath{\mathfrak{J}}}
	\newcommand{\fkK}{\ensuremath{\mathfrak{K}}}
	\newcommand{\fkL}{\ensuremath{\mathfrak{L}}}
	\newcommand{\fkM}{\ensuremath{\mathfrak{M}}}
	\newcommand{\fkN}{\ensuremath{\mathfrak{N}}}
	\newcommand{\fkO}{\ensuremath{\mathfrak{O}}}
	\newcommand{\fkP}{\ensuremath{\mathfrak{P}}}
	\newcommand{\fkQ}{\ensuremath{\mathfrak{Q}}}
	\newcommand{\fkR}{\ensuremath{\mathfrak{R}}}
	\newcommand{\fkS}{\ensuremath{\mathfrak{S}}}
	\newcommand{\fkT}{\ensuremath{\mathfrak{T}}}
	\newcommand{\fkU}{\ensuremath{\mathfrak{U}}}
	\newcommand{\fkV}{\ensuremath{\mathfrak{V}}}
	\newcommand{\fkW}{\ensuremath{\mathfrak{W}}}
	\newcommand{\fkX}{\ensuremath{\mathfrak{X}}}
	\newcommand{\fkY}{\ensuremath{\mathfrak{Y}}}
	\newcommand{\fkZ}{\ensuremath{\mathfrak{Z}}}
	
	\newcommand{\bbA}{\ensuremath{\mathbb{A}}}
	\newcommand{\bbB}{\ensuremath{\mathbb{B}}}
	\newcommand{\bbC}{\ensuremath{\mathbb{C}}}
	\newcommand{\bbD}{\ensuremath{\mathbb{D}}}
	\newcommand{\bbE}{\ensuremath{\mathbb{E}}}
	\newcommand{\bbF}{\ensuremath{\mathbb{F}}}
	\newcommand{\bbG}{\ensuremath{\mathbb{G}}}
	\newcommand{\bbH}{\ensuremath{\mathbb{H}}}
	\newcommand{\bbI}{\ensuremath{\mathbb{I}}}
	\newcommand{\bbJ}{\ensuremath{\mathbb{J}}}
	\newcommand{\bbK}{\ensuremath{\mathbb{K}}}
	\newcommand{\bbL}{\ensuremath{\mathbb{L}}}
	\newcommand{\bbM}{\ensuremath{\mathbb{M}}}
	\newcommand{\bbN}{\ensuremath{\mathbb{N}}}
	\newcommand{\bbO}{\ensuremath{\mathbb{O}}}
	\newcommand{\bbP}{\ensuremath{\mathbb{P}}}
	\newcommand{\bbQ}{\ensuremath{\mathbb{Q}}}
	\newcommand{\bbR}{\ensuremath{\mathbb{R}}}
	\newcommand{\bbS}{\ensuremath{\mathbb{S}}}
	\newcommand{\bbT}{\ensuremath{\mathbb{T}}}
	\newcommand{\bbU}{\ensuremath{\mathbb{U}}}
	\newcommand{\bbV}{\ensuremath{\mathbb{V}}}
	\newcommand{\bbW}{\ensuremath{\mathbb{W}}}
	\newcommand{\bbX}{\ensuremath{\mathbb{X}}}
	\newcommand{\bbY}{\ensuremath{\mathbb{Y}}}
	\newcommand{\bbZ}{\ensuremath{\mathbb{Z}}}
	
	\newcommand{\bfA}{\ensuremath{\mathbf{A}}}
	\newcommand{\bfB}{\ensuremath{\mathbf{B}}}
	\newcommand{\bfC}{\ensuremath{\mathbf{C}}}
	\newcommand{\bfD}{\ensuremath{\mathbf{D}}}
	\newcommand{\bfE}{\ensuremath{\mathbf{E}}}
	\newcommand{\bfF}{\ensuremath{\mathbf{F}}}
	\newcommand{\bfG}{\ensuremath{\mathbf{G}}}
	\newcommand{\bfH}{\ensuremath{\mathbf{H}}}
	\newcommand{\bfI}{\ensuremath{\mathbf{I}}}
	\newcommand{\bfJ}{\ensuremath{\mathbf{J}}}
	\newcommand{\bfK}{\ensuremath{\mathbf{K}}}
	\newcommand{\bfL}{\ensuremath{\mathbf{L}}}
	\newcommand{\bfM}{\ensuremath{\mathbf{M}}}
	\newcommand{\bfN}{\ensuremath{\mathbf{N}}}
	\newcommand{\bfO}{\ensuremath{\mathbf{O}}}
	\newcommand{\bfP}{\ensuremath{\mathbf{P}}}
	\newcommand{\bfQ}{\ensuremath{\mathbf{Q}}}
	\newcommand{\bfR}{\ensuremath{\mathbf{R}}}
	\newcommand{\bfS}{\ensuremath{\mathbf{S}}}
	\newcommand{\bfT}{\ensuremath{\mathbf{T}}}
	\newcommand{\bfU}{\ensuremath{\mathbf{U}}}
	\newcommand{\bfV}{\ensuremath{\mathbf{V}}}
	\newcommand{\bfW}{\ensuremath{\mathbf{W}}}
	\newcommand{\bfX}{\ensuremath{\mathbf{X}}}
	\newcommand{\bfY}{\ensuremath{\mathbf{Y}}}
	\newcommand{\bfZ}{\ensuremath{\mathbf{Z}}}
	
	\newcommand{\rmA}{\ensuremath{\mathrm{A}}}
	\newcommand{\rmB}{\ensuremath{\mathrm{B}}}
	\newcommand{\rmC}{\ensuremath{\mathrm{C}}}
	\newcommand{\rmD}{\ensuremath{\mathrm{D}}}
	\newcommand{\rmE}{\ensuremath{\mathrm{E}}}
	\newcommand{\rmF}{\ensuremath{\mathrm{F}}}
	\newcommand{\rmG}{\ensuremath{\mathrm{G}}}
	\newcommand{\rmH}{\ensuremath{\mathrm{H}}}
	\newcommand{\rmI}{\ensuremath{\mathrm{I}}}
	\newcommand{\rmJ}{\ensuremath{\mathrm{J}}}
	\newcommand{\rmK}{\ensuremath{\mathrm{K}}}
	\newcommand{\rmL}{\ensuremath{\mathrm{L}}}
	\newcommand{\rmM}{\ensuremath{\mathrm{M}}}
	\newcommand{\rmN}{\ensuremath{\mathrm{N}}}
	\newcommand{\rmO}{\ensuremath{\mathrm{O}}}
	\newcommand{\rmP}{\ensuremath{\mathrm{P}}}
	\newcommand{\rmQ}{\ensuremath{\mathrm{Q}}}
	\newcommand{\rmR}{\ensuremath{\mathrm{R}}}
	\newcommand{\rmS}{\ensuremath{\mathrm{S}}}
	\newcommand{\rmT}{\ensuremath{\mathrm{T}}}
	\newcommand{\rmU}{\ensuremath{\mathrm{U}}}
	\newcommand{\rmV}{\ensuremath{\mathrm{V}}}
	\newcommand{\rmW}{\ensuremath{\mathrm{W}}}
	\newcommand{\rmX}{\ensuremath{\mathrm{X}}}
	\newcommand{\rmY}{\ensuremath{\mathrm{Y}}}
	\newcommand{\rmZ}{\ensuremath{\mathrm{Z}}}
	
	\newcommand{\scrA}{\ensuremath{\mathscr{A}}}
	\newcommand{\scrB}{\ensuremath{\mathscr{B}}}
	\newcommand{\scrC}{\ensuremath{\mathscr{C}}}
	\newcommand{\scrD}{\ensuremath{\mathscr{D}}}
	\newcommand{\scrE}{\ensuremath{\mathscr{E}}}
	\newcommand{\scrF}{\ensuremath{\mathscr{F}}}
	\newcommand{\scrG}{\ensuremath{\mathscr{G}}}
	\newcommand{\scrH}{\ensuremath{\mathscr{H}}}
	\newcommand{\scrI}{\ensuremath{\mathscr{I}}}
	\newcommand{\scrJ}{\ensuremath{\mathscr{L}}}
	\newcommand{\scrK}{\ensuremath{\mathscr{K}}}
	\newcommand{\scrL}{\ensuremath{\mathscr{L}}}
	\newcommand{\scrM}{\ensuremath{\mathscr{M}}}
	\newcommand{\scrN}{\ensuremath{\mathscr{N}}}
	\newcommand{\scrO}{\ensuremath{\mathscr{O}}}
	\newcommand{\scrP}{\ensuremath{\mathscr{P}}}
	\newcommand{\scrQ}{\ensuremath{\mathscr{Q}}}
	\newcommand{\scrR}{\ensuremath{\mathscr{R}}}

	\newcommand{\scrT}{\ensuremath{\mathscr{T}}}
	\newcommand{\scrU}{\ensuremath{\mathscr{U}}}
	\newcommand{\scrV}{\ensuremath{\mathscr{V}}}
	\newcommand{\scrW}{\ensuremath{\mathscr{W}}}
	\newcommand{\scrX}{\ensuremath{\mathscr{X}}}
	\newcommand{\scrY}{\ensuremath{\mathscr{Y}}}
	\newcommand{\scrZ}{\ensuremath{\mathscr{Z}}}
	
	\newcommand{\calA}{\ensuremath{\mathcal{A}}}
	\newcommand{\calB}{\ensuremath{\mathcal{B}}}
	\newcommand{\calC}{\ensuremath{\mathcal{C}}}
	\newcommand{\calD}{\ensuremath{\mathcal{D}}}
	\newcommand{\calE}{\ensuremath{\mathcal{E}}}
	\newcommand{\calF}{\ensuremath{\mathcal{F}}}
	\newcommand{\calG}{\ensuremath{\mathcal{G}}}
	\newcommand{\calH}{\ensuremath{\mathcal{H}}}
	\newcommand{\calI}{\ensuremath{\mathcal{I}}}
	\newcommand{\calJ}{\ensuremath{\mathcal{J}}}
	\newcommand{\calK}{\ensuremath{\mathcal{K}}}
	\newcommand{\calL}{\ensuremath{\mathcal{L}}}
	\newcommand{\calM}{\ensuremath{\mathcal{M}}}
	\newcommand{\calN}{\ensuremath{\mathcal{N}}}
	\newcommand{\calO}{\ensuremath{\mathcal{O}}}
	\newcommand{\calP}{\ensuremath{\mathcal{P}}}
	\newcommand{\calQ}{\ensuremath{\mathcal{Q}}}
	\newcommand{\calR}{\ensuremath{\mathcal{R}}}
	\newcommand{\calS}{\ensuremath{\mathcal{S}}}
	\newcommand{\calT}{\ensuremath{\mathcal{T}}}
	\newcommand{\calU}{\ensuremath{\mathcal{U}}}
	\newcommand{\calV}{\ensuremath{\mathcal{V}}}
	\newcommand{\calW}{\ensuremath{\mathcal{W}}}
	\newcommand{\calX}{\ensuremath{\mathcal{X}}}
	\newcommand{\calY}{\ensuremath{\mathcal{Y}}}
	\newcommand{\calZ}{\ensuremath{\mathcal{Z}}}
	
	\newcommand{\brA}{\ensuremath{\breve{A}}}
	\newcommand{\brB}{\ensuremath{\breve{B}}}
	\newcommand{\brC}{\ensuremath{\breve{C}}}
	\newcommand{\brD}{\ensuremath{\breve{D}}}
	\newcommand{\brE}{\ensuremath{\breve{E}}}
	\newcommand{\brF}{\ensuremath{\breve{F}}}
	\newcommand{\brG}{\ensuremath{\breve{G}}}
	\newcommand{\brH}{\ensuremath{\breve{H}}}
	\newcommand{\brI}{\ensuremath{\breve{I}}}
	\newcommand{\brJ}{\ensuremath{\breve{J}}}
	\newcommand{\brK}{\ensuremath{\breve{K}}}
	\newcommand{\brL}{\ensuremath{\breve{L}}}
	\newcommand{\brM}{\ensuremath{\breve{M}}}
	\newcommand{\brN}{\ensuremath{\breve{N}}}
	\newcommand{\brO}{\ensuremath{\breve{O}}}
	\newcommand{\brP}{\ensuremath{\breve{P}}}
	\newcommand{\brQ}{\ensuremath{\breve{\mathbb{Q}}_p}}
	\newcommand{\brR}{\ensuremath{\breve{R}}}
	\newcommand{\brS}{\ensuremath{\breve{S}}}
	\newcommand{\brT}{\ensuremath{\breve{T}}}
	\newcommand{\brU}{\ensuremath{\breve{U}}}
	\newcommand{\brV}{\ensuremath{\breve{V}}}
	\newcommand{\brW}{\ensuremath{\breve{W}}}
	\newcommand{\brX}{\ensuremath{\breve{X}}}
	\newcommand{\brY}{\ensuremath{\breve{Y}}}
	\newcommand{\brZ}{\ensuremath{\breve{\mathbb{Z}}_p}}

	\newcommand{\OKE}{\ensuremath{\mathcal{O}_{\mathbf{E}_\lambda}}}
	\newcommand{\kl}{\ensuremath{k_\lambda}}
	\newcommand{\m}{\ensuremath{\mathfrak{m}}}
	\newcommand{\n}{\ensuremath{\mathfrak{n}}}
	\newcommand{\et}{\ensuremath{\mathrm{\acute{e}t}}}
	\newcommand{\alHom}{\ensuremath{\mbox{alHom}}}

	\newcommand{\Mod}{\ensuremath{\mathrm{Mod}_{\mathfrak{S}}^\varphi}}

	\newcommand{\Fr}{\ensuremath{\mathrm{Fr}}}
	\newcommand{\Cl}{\ensuremath{\mathrm{Cl}}}
	\newcommand{\sa}{\ensuremath{s_{\alpha,0}}}
	
	\newcommand{\po}{\ar@{}[dr]|{\text{\pigpenfont R}}}
	\newcommand{\pb}{\ar@{}[dr]|{\text{\pigpenfont J}}}

\section{Uniformisation of isogeny classes} \label{Sec:Uniformisation}
In this section we will recall the construction of the Kisin--Pappas integral models of Shimura varieties of Hodge type with parahoric level structure, and recall the construction of Hamacher--Kim of shtukas on the perfections of their special fibers. We also discuss the change-of-parahoric maps constructed by Zhou in \cite{Z}, and show that the shtukas of Hamacher--Kim are compatible with these maps using results of \cite{PappasRapoportShtukas}.

We then recall the results from Appendix \ref{Sec:VerySpecialADLV} about the existence of CM lifts for Shimura varieties with very special parahoric level, and use that to deduce the existence of CM lifts for arbitrary parahorics. Next, we study how uniformisation `lifts' along the change-of-parahoric maps. Concretely, we will show that uniformisation of isogeny classes at Iwahori level follows from uniformisation at very special level if a certain diagram of stacks on $\affperfk$ is Cartesian.

\subsection{Integral models of Shimura varieties} \label{Sec:IntegralModels}
We recall the construction of the integral models of Shimura varieties of Hodge type in \cite{KisinPappas}. Let $(G,X)$ be a Shimura datum with reflex field $E$ and let $\{\mu_h\}$ be the $G(\mathbb{C})$-conjugacy class of cocharacters of $G_{\mathbb{C}}$ defined in \cite[Section 6]{Z}. Let $\A_f$ denote the ring of finite adeles and $\afp$ the subring of $\A_f$ with trivial $p$-component. Let $U_p\subset G(\Q_p)$ and $U^p\subset G(\A_f^p)$ be compact open subgroups, write $U=U^pU_p$. Then for $U^p$ sufficiently small $$G(\Q)\backslash X\times G(\A_f)/U$$
has the structure of an algebraic variety over $\mathbb{C}$, which has a canonical model $\mathbf{Sh}_{U}(G,X)$ over the reflex field $E$ of $(G,X)$. We will also consider the projective limits (which exists by \cite[Tag 01YX]{stacks-project} since the transition maps are finite \'etale and the schemes are qcqs)
\begin{align}
  \mathbf{Sh}_{U_p}(G,X)&:=\varprojlim_{U^p}\mathbf{Sh}_{U^pU_p}(G,X) \\
  \mathbf{Sh}(G,X)&:=\varprojlim_{U}\mathbf{Sh}_{U}(G,X).
\end{align}
\subsubsection{} Let $V$ be a vector space over $\Q$ of dimension $2g$ equipped with a perfect alternating bilinear form $\psi$. For a $\mathbb{Q}$-algebra $R$, we write $V_R=V\otimes_{\Q}R$. Let $G_V$ denote the corresponding group of symplectic similitudes and let $\mathcal{H}_V$ denote the set of homomorphisms $h:\mathbb{S}\rightarrow G_{V,\R}$ corresponding to the Siegel upper and lower half space, where $\mathbb{S}:=\operatorname{Res}_{\mathbb{C}/\mathbb{R}} \mathbb{G}_m$ is the Deligne torus.

For the rest of this section, we fix an embedding of Shimura data $\iota:(G,X)\rightarrow (G_V, \mathcal{H}_V)$. We sometimes write $G$ for $G_{\Q_p}$ when there is no risk of confusion. We will also assume for the rest of this section that the following conditions hold
\begin{equation}\text{$G$ splits over a tamely ramified extension of $\Q_p$ and $p\nmid|\pi_1(\gder)|$}.
\end{equation}
Let $\G$ be a connected parahoric subgroup of $G$, that is, $\G=\G_x=\G_x^{\circ}$ for some $x\in B(G,\Q_p)$, see Section \ref{Sec:Parahorics}. We will follow the notation of Section \ref{Sec:Local} to write $\G=\mG_{K}$ for some $\sigma$-stable type $K \subset \mathbb{S}$. By \cite[2.3.15]{KisinPappas}, after replacing $\iota$ by another symplectic embedding, there is a closed immersion $\mG_{K} \rightarrow \mathcal{P}$, where $\mathcal{P}$ is a parahoric group scheme of $G_V$ corresponding to the stabiliser of a lattice $V_{\Z_p}\subset V$. Upon scaling $V_{\Z_p}$, we may assume $V_{\Z_p}^\vee\subset V_{\Z_p}$. This induces a closed immersion (see \cite[Proposition 2.3.7]{KisinPappas}) of local models $$M^{\mathrm{loc}}_{\mG_{K},X}\rightarrow M^{\mathrm{loc}}_{\mathcal{P},\mathcal{H}_V}\otimes\mathcal{O}_{E,v}$$
for every place $v$ of $E$ above $p$. Here the local models are as introduced in \cite[Section 2.1]{KisinPappas}.

\subsubsection{} Let $U_V^p \subset G_V(\afp)$ be a sufficiently small compact open subgroup. Let $V_{\Z_{(p)}}=V_{\Z_p}\cap V$ and write $G_{\Z_{(p)}}$ for the Zariski closure of $G$ in $GL(V_{\Z_{(p)}})$, then $G_{\Z_{(p)}}\otimes_{\Z_{(p)}}\Z_p\cong \mG_{K}$. The choice of $V_{\Z_{(p)}}$ gives rise to a compact open subgroup $U_{V,p} \subset G_V(\qp)$ which gives the Shimura variety $\mathbf{Sh}_{U_V}(G_V,\mathcal{H}_V)$ of level $U_V=U_V^p U_{V,p}$ an interpretation as a moduli space of (weakly polarised) abelian varieties up to prime-to-$p$ isogeny, and hence an integral model $\mathscr{S}_{U_V}(G_V,\mathcal{H}_V)$ over $\Z_{(p)}$, which is described in \cite[Section 6.3]{Z}. 

\subsubsection{} For the rest of this paper, we fix an algebraic closure $\overline{\Q}$ of $E$, and for each place $v$ of $\Q$ an algebraic closure $\overline{\Q}_v$ together with an embedding $\overline{\Q}\rightarrow \overline{\Q}_v$. Using these embeddings, we get a $G(\qpbar)$-conjugacy class of cocharacter $\{\mu_h\}$ induced from the Hodge cocharacter associated to $X$. 

By \cite[Lemma 2.1.2]{KisinModels}, we can choose $U_V^p$ such that $\iota$ induces a closed immersion
$$\mathbf{Sh}_{U}(G,X)\hookrightarrow \mathbf{Sh}_{U_V}(G_V,\mathcal{H}_V)_{E}$$ defined over $E$. The choice of embedding $E\rightarrow \overline{\Q}_p$ determines a place $v$ of $E$. Write $\Ok_{E,{(v)}}$ for the localisation of $\Ok_{E}$ at $v$, let $E_v$ be the completion of $E$ at $v$ and $\mathcal{O}_{E,v}$ the ring of integers of $E_v$. We assume the residue field has $q=p^r$ elements and as before $k$ will denote an algebraic closure of $\mathbb{F}_q$. We define $\mathscr{S}_U(G,X)^-$ to be the Zariski closure of $\mathbf{Sh}_{U}(G,X)$ inside $\mathscr{S}_{U_V}(G_V,\mathcal{H}_V)\otimes_{\Z_{(p)}}\mathcal{O}_{E,(v)}$, and $\mathscr{S}_{U}(G,X)$ to be its normalisation. By construction, for $U^p_1\subset U^p_2$ compact open subgroups of $G(\A_f^p)$, there are finite \'etale transition maps $\mathscr{S}_{U^p_1 U_p}(G,X)\rightarrow \mathscr{S}_{U^p_2 U_p}(G,X)$ and we write $\mathscr{S}_{U_p}(G,X):=\varprojlim_{U^p}\mathscr{S}_{U^pU_p}(G,X)$. Under these assumptions we have the following result:
\begin{Thm}[\cite{KisinPappas} Theorem 4.2.2, Theorem 4.2.7] \label{Thm:KisinPappasLocalModel} The $\mathcal{O}_{E,(v)}$ scheme $\mathscr{S}_{U_p}(G,X)$ is a flat $G(\A_f^p)$-equivariant extension of $\mathbf{Sh}_{U_p}(G,X)$. Moreover $\mathscr{S}_{U}(G,X)_{\mathcal{O}_{E,v}}$ fits in a local model diagram
\[\xymatrix{ &\widetilde{\mathscr{S}}_{U}(G,X)_{\mathcal{O}_{E,v}}\ar[dr]^\pi\ar[dl]^q&\\
\mathscr{S}_{U}(G,X)_{\mathcal{O}_{E,v}} & &M^{\mathrm{loc}}_{\mG_{K},X}},\]
where $q$ is a $\mG_{K}$-torsor and $\pi$ is smooth of relative dimension $\operatorname{Dim} G$.
\end{Thm}
Note that the main result of \cite{Pappas} tells us that the integral model $\mathscr{S}_{U}(G,X)$ does not depend on the choice of Hodge embedding. 

\subsubsection{} \label{Sec:TensorsIntegral} By \cite[1.3.2]{KisinModels}, the subgroup $G_{\Z_{(p)}}$ is the stabiliser of a collection of tensors $s_\alpha\in V_{\Z_{(p)}}^\otimes$ for $\alpha \in \mathscr{A}$. Let $h:\mathcal{A}\rightarrow \mathscr{S}_{U}(G,X)$ denote the pullback of the universal abelian variety on $\mathscr{S}_{U_V}(G_V,\mathcal{H}_V)$ and let $V_B:=R^1h_{\mathrm{an}*}\Z_{(p)}$, where $h_{\mathrm{an}}$ is the map of complex analytic spaces associated to $h$. We also let $\mathcal{V}=R^1h_*\Omega^\bullet$ be the relative de Rham cohomology of $\mathcal{A}$. Using the de Rham isomorphism, the tensors $s_\alpha$ give rise to a collection of Hodge cycles $s_{\alpha,dR}\in \mathcal{V}_\mathbb{C}^\otimes$, where $\mathcal{V}_\mathbb{C}$ is the complex analytic vector bundle associated to $\mathcal{V}$. By \cite[\S 2.2]{KisinModels}, these tensors are defined over $E$, and in fact over $\mathcal{O}_{E,(v)}$ by \cite[Proposition 4.2.6]{KisinPappas}.

Similarly, for a finite prime $\ell \neq p$, we let $\mathcal{V}_{\ell}=R^1h_{\acute{e}t*}\Q_\ell$ and $\mathcal{V}_p=R^1h_{\eta,\acute{e}t*}\Z_p$ where $h_\eta$ is the generic fibre of $h$. Using the \'etale-Betti comparison isomorphism, we obtain tensors $s_{\alpha,\ell}\in \mathcal{V}^\otimes_\ell$ and $s_{\alpha,p}\in\mathcal{V}_p^\otimes$. For $*=B, dR,\ell$ and $x\in \mathscr{S}_{U^pU_p}(G,X)(T)$ for some $\mathcal{O}_{E,(v)}$-scheme $T$, we write $s_{\alpha,*,x}$ for the pullback of $s_{\alpha,*}$ to $T$ via $x$. Similarly, the image of $x$ under $\scrS_{U^pU_p}(G,X) \to \mathscr{S}_{U_V}(G_V,\mathcal{H}_V)\otimes_{\Z_{(p)}}\mathcal{O}_{E,(v)}$ gives us a weakly polarised abelian variety up to prime-to-$p$ isogeny $(\mathcal{A}_x,\lambda)$. Over $\scrs_{U_p}(G,X)$, there is a canonical isomorphism of pro-\'etale $\afp$-local systems
\begin{align} \label{Eq:BasisAdelicTateModule}
    \eta: \mathcal{V}^p\xrightarrow{\sim} V \otimes \afp
\end{align}
which takes $t_{\alpha, \afp}$ to $t_{\alpha} \otimes 1$ for all $\alpha \in \mathscr{A}$, see \cite[Lemma 5.1.9]{KisinShinZhu}.

\subsubsection{} \label{Sec:HamacherKimShtukas} Recall that $\ovfp$ is an algebraic closure of $\F_q$ and $\qpbr=W(\ovfp)[1/p]$. Let $\overline{x}\in\mathscr{S}_{U}(G,X)(\ovfp)$ and $x\in\mathscr{S}_{U}(G,X)(\Ok_L)$ a point lifting $\overline{x}$, where $L/\qpbr$ is a finite extension. Let us write $\varphi$ for the Frobenius on $\qpbr$ and $\zpbr$.

Let $\pdiv_x$ denote the $p$-divisible group associated to $\mathcal{A}_x$ and $\pdiv_{x,0}$ its special fiber. Then $T_p\pdiv_x^\vee$ is identified with $H^1_{\acute{e}t}(\mathcal{A}_x,\Z_p)$ and we obtain $\Gamma_K$-invariant tensors $s_{\alpha,\acute{e}t,x}\in T_p\pdiv^{\vee\otimes}$ whose stabiliser can be identified with $\mG_{K}$. Let $\mathbb{D}_x:=\D(\pdiv_{x,0})$ be the contravariant Dieudonn\'e module associated to the $p$-divisible group $\pdiv_{x,0}$. We may apply the constructions of \cite[Section 3]{Z} to obtain $\varphi$-invariant tensors $s_{\alpha,0,x} \in \mathbb{D}_x$, whose stabiliser group can be identified with $\mG_{K} \otimes_{\Z_p} \zpbr$. 

This means that we can upgrade the Dieudonné module of $A_x$ to a $\mathcal{G}_K$-shtuka over $\ovfp$, and this gives a map (see \cite[Proof of Axiom 4 in Section 8]{Z})
\begin{align} \label{Eq:ShtukaMapDef}
  \mathscr{S}_{U}(G,X)(\ovfp) \to \shtmu(\ovfp),
\end{align}
where $\{\mu\}=\{\sigma(\mu_h^{-1})\}$. It is a result of Hamacher--Kim (\cite[Proposition 1]{HamacherKim}, see \cite[Poposition 4.4.1]{ShenYuZhang}) that there is a morphism $\shg \to \shtmu$ inducing \eqref{Eq:ShtukaMapDef} on $\ovfp$-points, where $\shg$ is the perfection of the basechange to $k$ of $\mathscr{S}_{U}(G,X)$.\footnote{The subscript $K$ is used to signify the choice of $\sigma$-stable type $K \subset \mathbb{S}$ corresponding to the parahoric subgroup $U_p$.} It follows from \cite[Theorem 1.3.4]{PappasRapoportShtukas}\footnote{Pappas and Rapoport construct a local shtuka bounded by $\{\mu\}$ over the diamond associated to the formal completion of $\mathscr{S}_{U}(G,X)$ and prove uniqueness and functoriality for this object. By \cite[Example 2.4.9]{PappasRapoportShtukas}, this induces a $\mathcal{G}_K$-shtuka over the perfect special fiber of $\mathscr{S}_{U}(G,X)$, which is of type $\{\mu\}$ by the discussion in \cite[Section 2.4.3]{PappasRapoportShtukas}, cf. \cite[Lemma 3.1.7]{DvHKZIgusaStacks}. The $\mathcal{G}_K$-shtuka constructed this way moreover agrees with the one constructed by \cite{ShenYuZhang}, this is spelled out in \cite[Section 5.3]{DvHKZIgusaStacks}. \label{FN:PR}} that this morphism does not depend on the choice of Hodge embedding and moreover can be upgraded to a $G(\afp)$-equivariant morphism
\begin{align}
  \shginf:=\varprojlim_{U^p} \shg \to \shtmu,
\end{align}
where $G(\afp)$ acts trivially on $\shtmu$.

It follows from \cite[the discussion after Theorem 4.4.3]{ShenYuZhang} that the perfection of the special fiber of $M_{\mG_{K},X}^{\mathrm{loc}}$ can be identified with the closed subscheme $\mloc$ of the affine flag variety for $\mG_{K}$ introduced in Section \ref{Sec:AdmissibleSetLocalModels}. Under this isomorphism, the right action of $\lpGk$ on $\mloc$, which factors through $\overline{\mG}_K$, is identified with the $\overline{\mG}_K$ action\footnote{Here we are writing $\overline{\mG}_K$ for the perfection of the special fiber of $\mG_K$, by abuse of notation.} on the perfection of $M_{G,K,\mu}^{\mathrm{loc}}$. Thus the local model diagram of Theorem \ref{Thm:KisinPappasLocalModel} gives us a (perfectly smooth) morphism
\begin{align}
  \lambda_K:\shg \to \lbrack \mloc/\overline{\mG}_K \rbrack.
\end{align}

\subsubsection{} \label{Sec:RestrictedShtukasShimuraVarieties} Fix $n \ge 2$ and choose $m \gg 0$ such that the action $\operatorname{Ad}_{\sigma} \lpGk$ on $\mlocn$ factors through $\lmG$ and such that $m$ satisfies the assumptions of Proposition \ref{Prop:CartesianInfiniteToFinite}. Let $\shtmu^{(m,1)}$ be the stack of $(m)$-restricted shtukas of type $\{\mu\}$ from Section \ref{Sec:RestrictedLocalShtukas} and also consider the stack $\shtmu^{(m,n), \mathrm{loc}}$ from Section \ref{Sec:GeneralRestrictedShtukas}. If we compose the morphism $\shg \to \shtmu$ constructed above with the natural map $\shtmu \to \shtmu^{(m,1)}$ we obtain a morphism 
\begin{align}
  \shg \to \shtmu^{(m,1)}.
\end{align}
By \cite[Theorem 4.4.3]{ShenYuZhang}, the perfectly smooth map $\lambda_K:\shg \to [\mloc/\overline{\mG}_K]$ induced from the local model diagram fits in a commutative diagram
\begin{equation}
\begin{tikzcd}
  \shg \arrow{r} \arrow{dr}{\lambda_K} & \shtmu^{(m,1)}\arrow{d} \\
  & \lbrack \mloc/\overline{\mG}_K \rbrack,
\end{tikzcd}
\end{equation}
where $\shtmu^{(m,1)} \to \lbrack \mloc/\overline{\mG}_K \rbrack$ comes from the diagram in Remark \ref{Rem:ShtukaLocalModelII}. The map $\shg \to \shtmu^{(m,1)}$ is perfectly smooth by \cite[Corollary 2.57]{HoffThesis}. Recall moreover that there is a natural map $\shtmu \to \shtmu^{(m,n), \mathrm{loc}}$ which induces a map $\shg \to \shtmu^{(m,n), \mathrm{loc}}$, this is also  perfectly smooth by \cite[Corollary 2.57]{HoffThesis}.
\begin{Rem}
    The perfect smoothness results discussed above are also claimed in \cite[Theorem 4.4.3]{ShenYuZhang} and \cite[Proposition 7.2.4]{XiaoZhu} (the latter in the hyperspecial case). It has been pointed out to us by Manuel Hoff and Xinwen Zhu that the proof of \cite[Proposition 7.2.4]{XiaoZhu} is not correct as written; the square in \cite[top of page 113]{XiaoZhu} does not commute. The same error seems to be present in the proof of \cite[Theorem 4.4.3]{ShenYuZhang}.
\end{Rem}

We can use the perfectly smooth map $\shg \to \shtmu^{(m,1)}$ to define the EKOR stratification on $\shg$, see Section \ref{Sec:EKOR}. In particular for $w \in \kadmu$ we will write $\shg\{w\}$ for the locally closed EKOR stratum of $\shg$. Since $\shg \to \shtmu^{(m,1)}$ is perfectly smooth and thus open, we find that the closure of $\shg\{w\}$ is given by
\begin{align}
    \shg\{\preceq w \}:=\bigcup_{w' \preceq w} \shg\{w'\}
\end{align}
because the closure relations hold on $\shtmu^{(m,1)}$.

\subsubsection{Isogeny classes} \label{Sec:IsogenyClasses} Let $x\in \shg(\ovfp)$, then attached to $x$ is an abelian variety $\calA_x$ over $\ovfp$. We write $\bbD_x$ for the contravariant Dieudonn\'e module associated to the $p$-divisible group $\scrG_x$ of $\calA_x$; then $\bbD_x$ is equipped with a corresponding set of tensors $s_{\alpha,0,x}$, see Section \ref{Sec:HamacherKimShtukas}. Similarly, for $\ell \neq p$, the rational $\ell$-adic Tate module $V_\ell\calA_x$ is equipped with tensors $s_{\alpha,\ell,x}$.

Two points $x,x'\in \shginf(\ovfp)$ are said to lie in the same isogeny class if there exists a quasi-isogeny $\calA_x\rightarrow\calA_{x'}$ such that the induced maps $\mathbb{D}_x[1/p] \rightarrow \mathbb{D}_{x'}[1/p]$ and $V_{\ell}\calA_x \rightarrow V_{\ell}\calA_{x'}\otimes $ sends $s_{\alpha,0,x'}$ to $s_{\alpha,0,x}$ and $s_{\alpha,\ell,x}$ to $s_{\alpha,\ell,x'}$ for all $\ell\neq p$. We write $\mathscr{I}_x \subset \shginf(\ovfp)$ for the isogeny class of $x$. 

For $x\in \shginf(\ovfp)$ we let $I_x$ denote the reductive $\bbQ$-group associated to $x$ as in \cite[Section 9.2]{Z}; it is a subgroup of the algebraic group of self quasi-isogenies of the abelian variety $A_x$. It comes equipped with a natural map $I_{x,\afp} \to G_{\afp}$ coming from the tautological basis of the prime-to-$p$ adelic Tate-module of $A_x$ given by \eqref{Eq:BasisAdelicTateModule}. If we choose an isomorphism $\alpha:\mathbb{D}_x \simeq V_{\mathbb{Z}_p} \otimes_{\zp} \zpbr$ sending $s_{\alpha, 0,x}$ to $s_{\alpha} \otimes 1$, under which the Frobenius of $\mathbb{D}_x$ corresponds to $b \in G(\qpbr)$, then there is also an induced map $I_{x, \qp} \to J_b$. Note that an isomorphism $\alpha$ as above always exists, by \cite[Section 5.6]{Z}.

\subsubsection{Change of parahoric} \label{Sec:ChangeOfParahoric}
Now let $J \subset K$ be another $\sigma$-stable type, let $\mG_{J}(\zp)=:U_p' \subset U_p$ and let $U'=U^pU_p'$. Note that $\mG_{J}$ is a connected parahoric since $\mG_{K}$ is, see Lemma \ref{Lem:IwahoriConnected}. We will use $\shgj$ to denote the perfection of the special fiber of $\mathscr{S}_{U'}(G,X)$. By \cite[Theorem 7.1]{Z}, there is a (necessarily unique) proper morphism $\pi_{J,K}:\mathscr{S}_{U'}(G,X) \to \mathscr{S}_{U}(G,X)$ which induces the obvious forgetful morphism on generic fibers and induces a $G(\afp)$-equivariant map
\begin{align}
  \varprojlim_{U^p} \mathscr{S}_{U'}(G,X) \to \varprojlim_{U^p} \mathscr{S}_{U}(G,X).
\end{align}
We now recall some aspects of the construction of the forgetful map from \cite[Section 7.2]{Z}, which we will need to compare isogeny classes in the source and target. There are facets $\mathfrak{f}, \mathfrak{f}'$ of the extended Bruhat--Tits building $B(G,\mathbb{Q}_p)$ of $G_{\qp}$ such that $U_p$ is the stabiliser of $\mathfrak{f}$ and such that $U_p'$ is the stabiliser of $\mathfrak{f}'$. Fix a choice of embedding $\theta:B(G,\mathbb{Q}_p) \to B(G_{V},\mathbb{Q}_p)$ as in \cite[Section 1.2]{KisinPappas}, compatible with  $G \to G_V$. Choose facets $\mathfrak{g}, \mathfrak{g}'$ containing $\theta(\mathfrak{f})$ and $\theta(\mathfrak{f}')$ respectively, and we let $M_p \subset G_V(\qp)$ be the stabiliser of $\mathfrak{g}$ and $M_p' \subset G_V(\qp)$ be the stabiliser of $\mathfrak{g}'$. As in \cite[Section 8.1]{Z}, the facets $\mathfrak{g}, \mathfrak{g}'$ correspond to lattice chains $\mathcal{L}$ and $\mathcal{L}'$ in $V_{\qp}$ respectively, with $\mathcal{L}'$ a refinement\footnote{This means that $\mathcal{L}$ and $\mathcal{L}'$ are chains of lattices in $V_{\qp}$ and that every lattice in $\mathcal{L}$ is also contained in $\mathcal{L}'$.} of $\mathcal{L}$; note that \cite{Z} writes $\mathcal{L}'$ for what we call $\mathcal{L}$ and vice versa. 

Then for sufficiently small $M^p \subset G_V(\afp)$ there are moduli-theoretic integral models $\mathscr{S}_{M^pM_p'}(G_V, \mathcal{H}_V)$ and $\mathscr{S}_{M^pM_p}(G_V, \mathcal{H}_V)$ over $\mathbb{Z}_{(p)}$. The former is a moduli space of $\mathcal{L}'$-chains of (weakly polarised) abelian varieties up to prime-to-$p$ isogeny with $M^p$ level structure, as explained in \cite[Proof of Axiom 1 in Section 8]{Z}, and the latter is a moduli space of $\mathcal{L}$-chains of (weakly polarised) abelian varieties up to prime-to-$p$ isogeny with $M^p$ level structure. There is a natural proper forgetful map 
\begin{align}
    \pi_{\mathcal{L}',\mathcal{L}}:\mathscr{S}_{M^pM_p'}(G_V, \mathcal{H}_V) \to \mathscr{S}_{M^pM_p}(G_V, \mathcal{H}_V)
\end{align}
which sends an $\mathcal{L}'$-chain of abelian varieties to the underlying $\mathcal{L}$-chain of abelian varieties. \smallskip 

Taking the direct sum of the lattices in the lattice chain $\mathcal{L}$ (resp. $\mathcal{L}'$) we get a symplectic space $V_{\mathcal{L}}$ (resp. $V_{\mathcal{L}'}$) and a lattice $V_{\mathcal{L},\zp}$ (resp. $V_{\mathcal{L}',\zp}$) in $V_{\mathcal{L},\qp}$ (resp. $V_{\mathcal{L}',\qp}$). Let us denote the stabiliser of $V_{\mathcal{L},\zp}$ (resp. $V_{\mathcal{L}',\zp}$) in $G_{V_{\mathcal{L}}}(\qp)$ (resp. $G_{V_{\mathcal{L}'}}(\qp)$) by  $J_p$ (resp. $J_p'$).

Then there are Hodge embeddings $(G_V,\mathcal{H}_V) \to (G_{V_{\mathcal{L}}},\mathcal{H}_{V_{\mathcal{L}}})$ and $(G_V, \mathcal{H}_V) \to (G_{V_{\mathcal{L}'}} \mathcal{H}_{V_{\mathcal{L}'}})$, which take $M_p$ to $J_p$ and $M_p'$ to $J_p'$ respectively. These induce finite maps
\begin{align}
  \mathscr{S}_{M^pM_p'}(G_V, \mathcal{H}_V) \to \mathscr{S}_{J^{'p}J_p'}(G_{V_{\mathcal{L}'}}, \mathcal{H}_{V_{\mathcal{L}'}}), \qquad \mathscr{S}_{M^pM_p}(G_V, \mathcal{H}_V) \to \mathscr{S}_{J^pJ_p}(G_{V_{\mathcal{L}}},\mathcal{H}_{V_{\mathcal{L}}})
\end{align}
for some $J^p \subset G_{V_{\mathcal{L}}}(\afp)$ and $J^{'p} \subset G_{V_{\mathcal{L}'}}(\afp)$ sufficiently small, which take an $\mathcal{L}'$-set (resp. $\mathcal{L}$-set) of abelian varieties to the product of all the abelian varieties in the $\mathcal{L}'$-set (resp. the $\mathcal{L}$-set), equipped with the product polarisation and level structure. It is explained in \cite[Equation 8.1 of Section 8]{Z} that our forgetful maps fit in a commutative diagram where all the horizontal maps are finite
\begin{equation} \label{Eq:ParahoricForgetfulGandSiegel}
  \begin{tikzcd}
   \mathscr{S}_{U'}(G,X) \arrow{d}{\pi_{J,K}} \arrow{r} & \mathscr{S}_{M^pM_p'}(G_V, \mathcal{H}_V)\otimes \mathcal{O}_{E,(v)} \arrow{d}{\pi_{\mathcal{L}', \mathcal{L}}} \arrow{r} &  \mathscr{S}_{J^{'p} J_p'}(G_{V_{\mathcal{L}'}} \mathcal{H}_{V_{\mathcal{L}'}}) \otimes \mathcal{O}_{E,(v)} \\
   \mathscr{S}_{U}(G,X) \arrow{r} & \mathscr{S}_{M^pM_p}(G_V, \mathcal{H}_V)\otimes \mathcal{O}_{E,(v)} \arrow{r} & \mathscr{S}_{J^{p} J_p}(G_{V_{\mathcal{L}}},\mathcal{H}_{V_{\mathcal{L}}}) \otimes \mathcal{O}_{E,(v)}.
  \end{tikzcd}
\end{equation}
\subsubsection{Change of parahoric and isogeny classes}
We set $\shgjinf:=\varprojlim_{U^p} \shgj$ and we let $\pi:\shgjinf \to \shginf$ denote the $G(\afp)$-equivariant map induced by $\pi_{J,K}$. We now define\footnote{This definition depends on the choice of Hodge embedding, at least a priori.} isogeny classes in $\shgjinf(\ovfp)$ using the Hodge embedding $(G,X) \to (G_{V_{\mathcal{L}'}} \mathcal{H}_{V_{\mathcal{L}'}})$, as in Section \ref{Sec:IsogenyClasses}. Similarly, we define isogeny classes in $\shginf(\ovfp)$ using the Hodge embedding $(G,X) \to (G_{V_{\mathcal{L}}},\mathcal{H}_{V_{\mathcal{L}}})$. For this, we choose tensors $s_{\beta} \in V_{\mathcal{L}', (p)}^{\otimes}$ cutting out $\mathcal{G}_{J, \zlocp}$ and tensors $s_{\alpha} \in V_{\mathcal{L}, (p)}^{\otimes}$ cutting out $\mathcal{G}_{K, \zlocp}$. 

By \cite[Proposition 7.7]{Z}, the forgetful map is compatible with isogeny classes in the sense that for $x \in \shgjinf(\ovfp)$ we have $\pi \left( \mathscr{I}_x \right) \subset \mathscr{I}_{\pi(x)}$. We will need the following (straightforward) refinement. 
\begin{Prop} \label{Prop:ForgetfulIsogenyClass}
Let $z,y \in \shgjinf(\ovfp)$ with the same image $x \in \shginf(\ovfp)$. Then $z$ and $y$ lie in the same isogeny class. In particular $\mathscr{I}_z=\pi^{-1}(\mathscr{I}_x)$.
\end{Prop}
\begin{proof}
The points $z,y$ correspond to $\mathcal{L}'$-sets of abelian varieties
\begin{align}
    ((A_1,\lambda_1,\eta_1) &\to (A_2,\lambda_2,\eta_2) \to \cdots \to (A_r,\lambda_r,\eta_r)) \\
    ((B_1,\lambda_1,\eta_1) &\to (B_2,\lambda_2,\eta_2) \to \cdots \to (B_r,\lambda_r,\eta_r))
\end{align}
such that the induced chains
\begin{align}
    ((A_{i_1},\lambda_{i_1},\eta_{i_1}) &\to (A_{i_2},\lambda_{i_2},\eta_{i_2}) \to \cdots \to (A_{i_s},\lambda_{i_s},\eta_{i_s})) \\
    ((B_{i_1},\lambda_{i_1},\eta_{i_1}) &\to (B_{i_2},\lambda_{i_2},\eta_{i_2}) \to \cdots \to (B_{i_s},\lambda_{i_s},\eta_{i_s}))
\end{align}
are isomorphic. There are unique quasi-isogenies $A_i \to B_i$ for all $i=1, \cdots, r$ that extend the given isomorphisms $A_{s_j} \simeq B_{s_j}$ for $j=1, \cdots, s$, and we would like to argue that the resulting quasi-isogeny
\begin{align}
    g:\prod_{i=1}^r A_i \to \prod_{i=1}^rB_i
\end{align}
is tensor preserving. By the discussion in Section 5.6 of \cite{Z} we can choose isomorphisms
\begin{align} \label{Eq:BasisI}
    \left(\bigoplus_{i=1}^r \Lambda_{i} \right) \otimes \zpbr \simeq \bigoplus_{i=1}^r \mathbb{D}(A_i[p^{\infty}]) \\
    \left(\bigoplus_{i=1}^r \Lambda_{i} \right) \otimes \zpbr \simeq \bigoplus_{i=1}^r \mathbb{D}(B_i[p^{\infty}])
\end{align}
taking $t_{\beta} \otimes 1$ to $t_{\beta,0,z}$ and $t_{\beta,0,y}$ respectively. There are induced isomorphisms
\begin{align} \label{Eq:BasisII}
    \left(\bigoplus_{i=1}^s \Lambda_{s_{i}} \right) \otimes \zpbr \simeq \bigoplus_{i=1}^s \mathbb{D}(A_i[p^{\infty}]) \\
    \left(\bigoplus_{i=1}^s \Lambda_{s_{i}} \right) \otimes \zpbr \simeq \bigoplus_{i=1}^s \mathbb{D}(B_i[p^{\infty}]).
\end{align}
taking $s_{\alpha} \otimes 1$ to $s_{\alpha,0,x}$. The isomorphism $f:\prod_{i=1}^s A_{s_i} \to \prod_{i=1}^s B_{s_i}$ coming from the equality $\pi(z)=x=\pi(y)$ is clearly tensor preserving. If we use the bases from \eqref{Eq:BasisII} then this means that the induced automorphism
\begin{align}
    \mathbb{D}(f) \in \operatorname{GL}\left(\bigoplus_{i=1}^s \Lambda_{s_{i,p}} \right)
\end{align}
lies in $G_{\mathbb{Z}_{p}}(\zpbr)$. This observation in combination with the following commutative diagram
\begin{equation}
\begin{tikzcd}
    G_{\mathbb{Z}_{p}}' \arrow{r} \arrow{d} & \operatorname{GL}\left(\bigoplus_{i=1}^r \Lambda_{i,p} \arrow{d} \right)\\
    G_{\mathbb{Z}_{p}} \arrow{r} & \operatorname{GL}\left(\bigoplus_{i=1}^s \Lambda_{s_{i,p}} \right)
\end{tikzcd}    
\end{equation}
shows that the automorphism induced by $g$ lands in $G_{\mathbb{Z}_{p}}(\zpbr)$. But this means that $\mathbb{D}(g) \in G_{\mathbb{Z}_{p}}'(\qpbr)=G_{\mathbb{Z}_{p}}(\qpbr)$ and therefore $g$ is tensor preserving. A similar argument shows that $g$ preserves the tensors for $\ell \not=p$. 
\end{proof}
We will also need the following lemma.
\begin{Lem} \label{Lem:NotCartesianYet}
The following diagram commutes:
\begin{equation} \label{Eq:NotCartesian}
  \begin{tikzcd}
  \shgj \arrow{r} \arrow{d} & \shtjmu \arrow{d} \\
  \shg \arrow{r} & \shtmu.
  \end{tikzcd}
\end{equation}
\end{Lem}
\begin{proof}
This is a consequence of \cite[Corollary 4.3.2]{PappasRapoportShtukas}, see Footnote \ref{FN:PR}.
\end{proof}

\subsection{CM Lifts}
In this section, we will prove a corollary of Theorem \ref{Thm:VerySpecialUniformisationAndLifting}, which is a slight generalisation of Theorem \ref{Thm:CMLifting}. 

\subsubsection{} Recall that a special point datum for $(G,X)$ is a triple $(T,h,i)$ where $(T,h)$ is a Shimura datum with $T$ a torus, and where $i:(T,h) \to (G,X)$ is an embedding of Shimura data such that $i(T)$ is a maximal torus. Associated to a special point datum $\mathfrak{s}=(T,h,i)$ is a $\qpbar$-point $x_{\mathfrak{s}}$ of $\mathbf{Sh}(G,X)$, see \cite[Section 5.7.1]{KisinShinZhu}. It is explained in loc. cit. that for any parahoric $U_p \subset G(\qp)$ its projection to $\mathbf{Sh}_{U_p}(G,X)(\qpbar)$ extends to a $\zpbar$ point of $\mathscr{S}_{U_p}(G,X)$. A \emph{special point} of $\mathbf{Sh}(G,X)(\qpbar)$ is a point that lies in the $G(\af)$-orbit of $x_{\mathfrak{s}}$ for some special point datum $\mathfrak{s}$. For any choice of parahoric $U_p \subset G(\qp)$, the projection of a special point to $\mathbf{Sh}_{U_p}(G,X)(\qpbar)$ extends (uniquely) to a $\zpbar$ point of $\mathscr{S}_{U_p}(G,X)$. The mod $p$ reductions of these extensions define points of $\shginf(\ovfp)$ that we call \emph{reductions of special points}.

\subsubsection{} Recall the notion of a very special parahoric from Section \ref{Sec:VerySpecialParahoric}. 
\begin{Cor} \label{Cor:Uniformisation}
Let $U_p' \subset G(\qp)$ be an arbitrary connected parahoric and suppose that there is a connected Iwahori subgroup $U_p''$ contained in $U_p'$ and a connected very special parahoric subgroup $U_p$ containing $U_p''$. Then each isogeny class of $\scrS_{U_p'}(G,X)(\ovfp)$ contains a point $x$ which is the reduction of a special point.  
\end{Cor}
\begin{proof}
Choose a connected Iwahori subgroup $U_p'' \subset U_p'$ and a connected very special parahoric subgroup $U_p \supset U_p''$ as in the assumptions of the theorem. We first prove the theorem for $\scrS_{U_p''}(G,X)$.

Let $z \in \scrS_{U_p''}(G,X)(\ovfp)$ and let $x$ be its image in $\scrS_{U_p}(G,X)(\ovfp)$. Then the isogeny class $\mathscr{I}_x$ contains the reduction of a special point $P \in \scrS_{U_p}(G,X)(\qpbar)$ by Theorem \ref{Thm:VerySpecialUniformisationAndLifting}. By definition, any lift $P'' \in \scrS_{U_p''}(G,X)(\qpbar)$ is also special. Thus we find that the inverse image of $\mathscr{I}_x$ under 
\begin{align}
  \scrS_{U_p''}(G,X)(\ovfp) \to \scrS_{U_p}(G,X)(\ovfp)
\end{align}
contains the reduction of a special point. But by Proposition \ref{Prop:ForgetfulIsogenyClass}, this inverse image is equal to $\mathscr{I}_z$ and so every isogeny class in $\scrS_{U_p''}(G,X)(\ovfp)$ contains the reduction of a special point. A similar argument shows that every isogeny class in $\scrS_{U_p'}(G,X)(\ovfp)$ contains the reduction of a special point.
\end{proof}

\subsection{Lifting uniformisation} \label{Sec:LiftingUniformisation}
From now on we let $K \subset \mathbb{S}$ be a $\sigma$-stable type corresponding to a connected very special parahoric. We let $U_p=\mG_{K}(\zp)$ and $U_p'=\mG_{\emptyset}(\zp)$; note that $U_p'$ is a connected parahoric subgroup by Lemma \ref{Lem:IwahoriConnected}. In this case, the commutative diagram from Lemma \ref{Lem:NotCartesianYet} is
\begin{equation} \label{Eq:NotYetCartesianIwahori}
  \begin{tikzcd}
  \shge \arrow{r} \arrow{d} & \shtemu \arrow{d} \\
  \shg \arrow{r} & \shtmu.
  \end{tikzcd}
\end{equation}
The goal of this section is to prove the following result. Let $x \in \shgeinf(\ovfp)$ and choose an isomorphism $\mathbb{D}_x \simeq V_{\mathbb{Z}_p} \otimes_{\zp} \zpbr$ sending $s_{\alpha, 0,x}$ to $s_{\alpha} \otimes 1$. Let $b \in G(\qpbr)$ be the element corresponding to the Frobenius of $\mathbb{D}_x$ under this isomorphism.
\begin{Thm} \label{Thm:LiftingUniformisation}
If for every sufficiently small compact open subgroup $U^p$ the diagram \eqref{Eq:NotYetCartesianIwahori} is Cartesian, then for $z\in \scrS_{U_p'}(G,X)(\ovfp)$ with associated element $b\in G(\brQ)$ there is a $G(\afp)$-equivariant bijection
			\begin{align}
			    I_z(\bbQ) \backslash \xmube(\ovfp) \times G(\bbA_f^p) \to \mathscr{I}_z.
			\end{align}
\end{Thm}
\subsubsection{} Let $z \in \scrS_{U_p'}(G,X)(\ovfp)$ with image $x \in \scrS_{U_p}(G,X)(\ovfp)$ and let $b \in G(\brQ)$ be as in the statement of Theorem \ref{Thm:VerySpecialUniformisationAndLifting}. Then Theorem \ref{Thm:VerySpecialUniformisationAndLifting} gives us a map of sets
\begin{align}
  G(\afp) \times \xmub(\ovfp) \to \mathscr{I}_x
\end{align}
and Lemma \ref{Lem:LocalUniformisation} gives us a map of stacks $\Theta_b:\xmub \to \shtmub$.
\begin{Lem} \label{Lem:CompatibilityUniformisationJointStratification}
The following diagram of groupoids commutes
\begin{equation}
  \begin{tikzcd}
  G(\afp) \times \xmub(\ovfp) \arrow[r, "pr_2"] \arrow{d} & \xmub(\ovfp) \arrow{d}{\Theta_b} \\
  \mathscr{I}_x \arrow{r} & \shtmub(\ovfp).
  \end{tikzcd}
\end{equation}
\end{Lem}
\begin{proof}
This follows from the compatibility of the uniformisation map with the `joint stratification' $\shg(\ovfp) \to \shtmu(\ovfp)$, as discussed in the proof of Axiom 4(b) in Section 8 of \cite{Z}.
\end{proof}
We have the following corollary of Lemma \ref{Lem:CompatibilityUniformisationJointStratification}.
\begin{Cor} \label{Cor:InverseImageUniformisation}
Let $\mathscr{L}_x$ be the inverse image in $\shgeinf(\ovfp)$ of $\mathscr{I}_x \subset \shginf(\ovfp)$. If the assumption of Theorem \ref{Thm:LiftingUniformisation} holds, then there is a $G(\afp)$-equivariant bijection
\begin{align}
  \mathscr{L}_x \simeq I_x(\bbQ) \backslash \xmube(\ovfp) \times G(\bbA_f^p).
\end{align}
\end{Cor}
\begin{proof}
Taking the inverse limit over $U^p$ of the Cartesian diagram of \eqref{Eq:NotYetCartesianIwahori}, we get the following $G(\afp)$-equivariant Cartesian diagram of groupoids
\begin{equation} \label{Eq:LiftingUniformisationCartesian}
\begin{tikzcd}
  \mathscr{L}_x \arrow{r} \arrow{d} & \shtemu(\ovfp) \arrow{d} \\
  \mathscr{I}_x \arrow{r} & \shtmu(\ovfp).
\end{tikzcd}  
\end{equation}
Theorem \ref{Thm:VerySpecialUniformisationAndLifting} gives us a bijection $I_x(\bbQ) \backslash \xmub(\ovfp) \times G(\bbA_f^p) \to \mathscr{I}_x$. Lemma \ref{Lem:LocalUniformisation} 
and Lemma \ref{Lem:CompatibilityUniformisationJointStratification} tell us that we can identify \eqref{Eq:LiftingUniformisationCartesian} with
\begin{equation} \label{Eq:LiftingUniformisationCartesian2}
\begin{tikzcd}
  \mathscr{L}_x \arrow{r} \arrow{d} & \left[\frac{\xmube(\ovfp)}{J_b(\mathbb{Q}_p)} \right] \arrow{d} \\
  I_x(\bbQ) \backslash \xmub(\ovfp) \times G(\bbA_f^p) \arrow{r} & \left[\frac{\xmub(\ovfp)}{J_b(\mathbb{Q}_p)} \right],
\end{tikzcd}  
\end{equation}
such that the bottom map is induced by the projection map $\xmub(\ovfp) \times G(\afp) \to \xmub(\ovfp)$ and the right vertical map is induced by the natural map $\xmube(\ovfp) \to \xmub(\ovfp)$. But now it is clear that there is a $G(\afp)$-equivariant bijection
\begin{align}
  \mathscr{L}_x \simeq I_x(\bbQ) \backslash \xmube(\ovfp) \times G(\bbA_f^p).
\end{align}
\end{proof}
\begin{proof}[Proof of Theorem \ref{Thm:LiftingUniformisation}]
The theorem is a direct consequence of Corollary \ref{Cor:InverseImageUniformisation}, which proves uniformisation for $\mathscr{L}_x$, and Proposition \ref{Prop:ForgetfulIsogenyClass}, which proves that $\mathscr{L}_x=\mathscr{I}_z$. 
\end{proof}

\subsection{Uniformisation and connected components} \label{Sec:UniformisationAndPiNaught} Define $G(\mathbb{Q})_{+}=G(\mathbb{Q}) \cap G(\mathbb{R})_{+}$, with $G(\mathbb{R})_{+}$ the inverse image of the identity component (in the real topology) of $\gad(\mathbb{R})$ under the natural map $G(\mathbb{R}) \to \gad(\mathbb{R})$. Let $\rho:\gsc \to \gder$ be the simply-connected cover of the derived subgroup of $G$; we will sometimes conflate groups like $\gsc(\Q)$ and $\gsc(\afp)$ with their images under $\rho$ by abuse of notation. Consider the profinite topological space
\begin{align}
  \pi(G):=\varprojlim_{U^p} G(\mathbb{Q})_{+} \backslash G(\mathbb{A}_f) / U^p U_p.
\end{align}
We have $\rho(\gsc(\Q)) \subset G(\mathbb{Q})_{+}$ since $\gsc(\R)$ is connected, and strong approximation for $\gsc$ away from $\infty$, see \cite[Theorem 7.12]{PR}, tells us that the closure of $\rho(\gsc(\Q))$ in $G(\af)$ contains $\rho(\gsc(\af))$. Moreover, the subset $G(\mathbb{Q})_{+} \rho(\gsc(\af))$ is closed in $G(\af)$ since $(G,X)$ is of Hodge type, see \cite[Section 2.0.15]{Deligne}. This means that $G(\mathbb{Q})_{+} \backslash G(\af)/ \rho(\gsc(\af))$ is Hausdorff. Thus the natural action of $U_p$ on it has compact stabilisers and compact orbits, since $U_p$ is compact. We can therefore deduce from \cite[Lemma 4.20]{Milne} that the natural map 
\begin{align}
  G(\mathbb{Q})_{+} \backslash \frac{G(\mathbb{A}_f)}{\rho(\gsc(\af))}/U_p \to \pi(G)
\end{align}
is a homeomorphism. We see that $\pi(G)$ is an abelian group since $\tfrac{G(\mathbb{A}_f)}{\rho(\gsc(\af))}$ is.
\subsubsection{} \label{Sec:StrongApproximation} By Lemma \ref{Lem:Fundamentalgroup} below, we may make the identification
\begin{align} \label{Eq:IdentificationPiNaught}
  \pi(G) = G(\mathbb{Q})_{+} \backslash \left(\pi_1(G)_I^{\sigma} \times \frac{G(\afp)}{\rho(\gsc(\afp))}\right). 
\end{align}
In particular there is a natural surjective group homomorphism $\pi_1(G)_I^{\sigma} \times G(\afp) \to \pi(G)$. 
\begin{Lem} \label{Lem:Fundamentalgroup}
Let $\mathbb{G}$ be a connected reductive group over $\qp$ that splits over a tamely ramified extension and let $\mathcal{G}$ be a parahoric group scheme for $\mathbb{G}$. Then there is a natural isomorphism
\begin{align}
  \frac{\mathbb{G}(\mathbb{Q}_p)}{\rho(\mathbb{G}^{\mathrm{sc}}(\mathbb{Q}_p)) \cdot \mathcal{G}(\zp)} \simeq \pi_1(\mathbb{G})_I^{\sigma}.
\end{align}
\end{Lem}
\begin{proof}
Recall that we have the surjective Kottwitz homomorphism $\tilde{k}_\mathbb{G}:\mathbb{G}(\qpbr) \to \pi_1(\mathbb{G})_I$ with kernel given by $\rho(\mathbb{G}^{\mathrm{sc}}(\qpbr)) \cdot \mathcal{T}(\zpbr) = \rho(\mathbb{G}^{\mathrm{sc}}(\qpbr)) \cdot \mathcal{G}(\zpbr)$ (see \cite[Lemma 17 of the appendix]{PappasRapoportAffineFlag}), where $\mathcal{T}$ is the connected Néron model of a standard torus $T$ of $\mathbb{G}$. Recall moreover that $\tilde{k}_{\mathbb{G}}$ restricts to a surjective map $\tilde{\kappa}_{\mathbb{G},0}:\mathbb{G}(\qp) \to \pi_1(\mathbb{G})_I^{\sigma}$ by \cite[Section 7.7]{Kottwitz2}. Thus when $\mathbb{G}=T$ is a torus, we have a short exact sequence
\begin{align}
    0 \to \mathcal{T}(\zpbr) \to T(\qpbr) \to \pi_1(G)_I \to 0,
\end{align}
that remains exact upon taking $\sigma$-invariants, proving the lemma for tori. If $\mathbb{G}^{\mathrm{der}}$ is simply connected, then there is a canonical identification $\pi_1(\mathbb{G})=\pi_1(\mathbb{G}^{\mathrm{ab}})$, where $\mathbb{G}^{\mathrm{ab}}$ is the maximal abelian quotient of $\mathbb{G}$. We can consider the morphism of short exact sequences
\begin{equation}
    \begin{tikzcd}
    1 \arrow{r} & \mathbb{G}^{\mathrm{der}}(\qp) \arrow{d} \arrow{r} & \mathbb{G}(\qp) \arrow{r} \arrow{d} & \mathbb{G}^{\mathrm{ab}}(\qp) \arrow{d}  \arrow{r} & 1 \\
    & 1 \arrow{r} & \pi_1(\mathbb{G})_I^{\sigma} \arrow{r} & \pi_1(\mathbb{G}^{\mathrm{ab}})_I^{\sigma} \arrow{r} & 1.
    \end{tikzcd}
\end{equation}
The lemma now follows from the well known fact (see e.g.\cite[Proposition 2.6.2]{vHXiao}) that the image of $\mathcal{G}(\zp)$ in $\mathbb{G}^{\mathrm{ab}}(\qp)$ is equal to $\mathcal{D}(\zp)$, where $\mathcal{D}$ is the connected Néron model of $\mathbb{G}^{\mathrm{ab}}$. 

For general $\mathbb{G}$, choose a $z$-extension $1 \to Z \to \tilde{\mathbb{G}} \to \mathbb{G} \to 1$ in the sense of \cite[Section 11.4]{KalethaPrasad}. Then it follows from \cite[Proposition 11.5.3]{KalethaPrasad} that $\ker \tilde{\kappa}_{\tilde{\mathbb{G}},0} \to \tilde{\kappa}_{\mathbb{G},0}$ is surjective. Choosing a parahoric model $\tilde{\mathcal{G}}$ of $\tilde{\mathbb{G}}$ together with a morphism $\tilde{\mathcal{G}} \to \mathcal{G}$, see \cite[Section 1.1.3]{KisinPappas}, we see that it suffices to show that $\tilde{\mathcal{G}}(\zp) \to \mathcal{G}(\zp)$ is surjective. For this, we note that by \cite[Proposition 1.1.4]{KisinPappas} there is a short exact sequence of group schemes over $\zp$ (here we use the tameness assumption)
\begin{align}
    1 \to \mathcal{Z} \to \tilde{\mathcal{G}} \to \mathcal{G} \to 1,
\end{align}
where $\mathcal{Z}$ has smooth connected special fiber. The surjectivity of $\tilde{\mathcal{G}}(\zp) \to \mathcal{G}(\zp)$ now follows from Lang's lemma.
\end{proof}

\subsubsection{} Define (cf. \cite[Section 2.1.3]{Deligne}) 
\begin{align} \label{Eq:ConnectedComponentsGeneric}
  \pi(G,X):=\varprojlim_{U^p} \pi_0(\mathbf{Sh}_{U}(G,X)_{\overline{\mathbb{C}}})= \varprojlim_{U^p} G(\mathbb{Q}) \backslash \left( \pi_0(X) \times G(\af) /U^p U_p\right).
\end{align}
This is a quotient (where now the inverse limit runs over all compact open subgroups $U \subset G(\af)$)
\begin{align}
  \varprojlim_{U} G(\mathbb{Q}) \backslash \left( \pi_0(X) \times G(\af) /U \right),
\end{align}
on which $G(\af)$ acts through the abelian group $G(\af)/\rho(\gsc(\af))$, again by strong approximation for $\gsc$ away from infinity. By the discussion above, this induces an action of $G(\afp) \times \pi_1(G)_I^{\sigma}$ on $\pi(G,X)$ which makes it into a torsor for $\pi(G)$, see \cite[Section 5.5.4]{KisinShinZhu}.
\smallskip 

Recall that $U_p$ is a very special parahoric, which implies that the integral model $\mathscr{S}_{U^pU_p}(G,X)$ has normal special fiber, see \cite[Collary 4.6.26]{KisinPappas}. Then \cite[Corollary 4.1.11]{MP} tells us that for all choices of $U^p$, for each finite extension $F$ of the reflex field $E$ and any place $w$ of $F$ extending $v$, the natural maps
\begin{align} \label{Eq:ConnectedComponentsSpecial}
  \pi_0(\mathbf{Sh}_{U^pU_p}(G,X) \otimes_E F) \rightarrow \pi_0(\mathscr{S}_{U^pU_p}(G,X) \otimes_{\mathcal{O}_{E,(v)}} \mathcal{O}_{F,(w)}) \leftarrow \pi_0(\shg \otimes k(w))
\end{align}
are isomorphisms. Thus there is a natural $G(\afp)$-equivariant isomorphism $\pi_0(\shginf) \to \pi(G,X)$, which turns $\pi_0(\shginf)$ into a torsor for $\pi(G)$ and equips it with an action of $G(\afp) \times \pi_1(G)_I^{\sigma}$.

\subsubsection{} As before $\mathcal{G}_K$ denotes a connected very special parahoric group scheme. Let $x \in \shginf(\ovfp)$ and $b\in G(\brQ)$ the associated element that is well-defined up to $\mG_{K}(\zpbr)$-conjugacy. The Kottwitz homomorphism induces a natural map of perfect schemes
\begin{align}
  \kappa: \xmub \to \grg \to \pi_0(\grg) \simeq \pi_1(G)_I,
\end{align}
with image $c_{[b], \mu} + \pi_1(G)_I^{\sigma} \subset \pi_1(G)_I$, see \cite[Lemma 6.1]{HZ}. As in \cite[Section 6.7]{Z} we have $1 \in \xmub(\ovfp)$, which implies that the coset $c_{[b], \mu} + \pi_1(G)_I^{\sigma}$ contains $1$ and is thus equal to $\pi_1(G)_I^{\sigma}$. In particular, the map $\kappa$ takes values in $\pi_1(G)_I^{\sigma}$. Theorem \ref{Thm:VerySpecialUniformisationAndLifting} gives us a $G(\afp)$-equivariant map of sets 
\begin{align}
  i_x:\xmub(\ovfp) \times G(\afp) \to \shginf(\ovfp),
\end{align}
sending $(1,1)$ to $x$. 
\begin{Prop} \label{Prop:UniformisationAndConnectedComponents}
Consider the composition $\xmub(\ovfp) \times G(\afp) \to \shginf(\ovfp) \to \pi_0(\shginf) =\pi(G,X)$, and let $\underline{x}$ be the image of $x$ in $\pi(G,X)$. Then the image of $(y,g^p)$ in $\pi(G,X)$ is given by
\begin{align}
  (\kappa(y),g^p) \cdot \underline{x},
\end{align}
where $\cdot$ denotes the natural action of $\pi_1(G)_I^{\sigma} \times G(\afp)$ on $\pi(G,X)$ constructed above.
\end{Prop}
\begin{proof}
By the $G(\afp)$-equivariance of the map $i_x$, it suffices to prove the theorem for $g^p=1$ or for the map $\xmub(\ovfp) \to \shginf(\ovfp)$. The map $\xmub(\ovfp) \to \shginf(\ovfp)$ upgrades to a map of perfect schemes $\xmub \to \shginf$ by the proof of \cite[Proposition 5.2.2]{HeZhouZhu}. Therefore the image of $y \in \xmub(\ovfp)$ in $\pi(G,X)$ only depends on the connected component that $y$ lies in. Thus the result is true for a union of connected components $X(\mu,b)_K^{\circ}$ of $\xmub$. Moreover, the result is clearly true for $y=1$. \medskip

Now we follow the proof of Proposition \ref{Prop:UniformisationVerySpecial} and freely use the notation from that proof: Let $M\subset G_{\bbQ_p}$ be the standard Levi subgroup given by the centraliser of the Newton cocharacter $\overline\nu_b$. By Theorem \ref{thm: conn ADLV}, there exists $\lambda\in I_{\mu,b,M}$ and an element $$g\in X(\mu,b)^\circ_K\cap X^M(\lambda,b)_M.$$ We may then replace $x$ by $i_x(g)$ to assume that $b\in M(\brQ)$ and furthermore that $b=\dot\tau_\lambda$ where $\tau_\lambda\in\Omega_M$ corresponds to $\kappa_M(b)\in \pi_1(M)_I$. \smallskip 

Arguing as in the proof of \ref{Prop:UniformisationVerySpecial}, we can find a finite extension $L$ of $\qpbr$ and choose an $(M,\mu_y)$-adapted lifting $\tilde{\scrG}/\calO_{L}$ of $\scrG_x$ (cf. \cite[Definition 4.6]{Z}) which corresponds to a point $\tilde{x}\in\scrS_{U_p}(G,X)(\calO_{L})$. The construction in \cite[Proposition 5.14]{Z} gives us a map $$\iota:M(\bbQ_p)/\calM(\bbZ_p)\rightarrow X^M(\lambda,b)_{K_M},$$
whose composition with $X^M(\lambda,b)_{K_M} \to \xmub$ fits into the following commutative diagram
\begin{equation}
  \begin{tikzcd}
  M(\qp) \arrow{dr} \arrow{rr} & & \xmub(\ovfp) \arrow{dl}{\kappa} \\
  & \pi_1(G)_I^{\sigma},
  \end{tikzcd}
\end{equation}
where the left diagonal map is the composition $M(\qp) \to G(\qp) \to \pi_1(G)_I^{\sigma}$. Choose a lift of $\tilde{x}$ to a point $z \in \mathbf{Sh}(G,X)(C)$, where $C$ is an algebraic closure of $\qpbr$. Then by construction the map $\iota$ fits into the following diagram (compare with the diagram in \cite[Corollary 1.4.12]{KisinPoints})
\begin{equation}
  \begin{tikzcd}
  M(\mathbb{Q}_p) \arrow{r} \arrow{d} & \mathscr{S}_{U_p}(G,X)(\mathcal{O}_{C}) \arrow{d} \\
  \xmub(\ovfp) \arrow{r} & \shginf(\ovfp).
  \end{tikzcd}
\end{equation}
Here the top horizontal map is given by the (Hecke) action of $M(\mathbb{Q}_p) \subset G(\qp)$ on $z \in \mathbf{Sh}(G,X)(C)$ followed by projection to back to $\mathscr{S}_{U_p}(C)$, extending to $\scrS_{U_p}(G,X)(\mathcal{O}_C)$ and reducing mod $p$. 

We see that elements $g \in \xmub(\ovfp)$ in the image of $M(\qp) \to \xmub(\ovfp)$ satisfy the conclusion of the proposition. Moreover, this means that the result holds for all points $g \in \xmub(\ovfp)$ lying in a connected component of $\xmub$ intersecting the image of the map $M(\qp) \to \xmub(\ovfp)$. But the map $$M(\bbQ_p)/\calM(\bbZ_p)\rightarrow \pi_0(X^M(\lambda,b)_{K_M})$$ is surjective by \cite[Proposition 5.19]{Z} and moreover
\begin{align}
  \pi_0(X^M(\lambda,b)_{K_M}) \to \pi_0(\xmub)
\end{align}
is surjective by Theorem \ref{thm: conn ADLV}. Thus every connected component of $\xmub$ contains a point in the image of $M(\qp) \to \xmub(\ovfp)$, and so we are done.
\end{proof}
\begin{Cor} \label{Cor:BasicNonEmptiness}
Let $\tau \in \admu$ be the unique element of length zero. Then
\begin{align}
  \shge(\tau) \to \pi_0(\shg)
\end{align}
is surjective.
\end{Cor}
\begin{proof}
It suffices to prove this for the analogous map $\shgeinf(\tau) \to \pi_0(\shginf)$. Since $\shgeinf(\tau)$ is contained in the basic locus we can use \cite[Proposition 6.5(i)]{Z} to produce for $x \in \shgeinf(\tau)$ a uniformisation map
\begin{align}
    \xmube(\ovfp) \times G(\afp) \to \shgeinf 
\end{align}
which, as in \cite[proof of Axiom 5]{Z}, restricts to a map
\begin{align}
    i_x:\xmube(\tau)(\ovfp) \times G(\afp) \to \shgeinf(\tau)(\ovfp).
\end{align}
Moreover, the following diagram commutes (by construction, see \cite[Proposition 7.8]{Z})
\begin{equation}
    \begin{tikzcd}
    \xmube(\tau)(\ovfp) \times G(\afp) \arrow{d} \arrow{r}{i_x} & \shgeinf(\tau)(\ovfp) \arrow{d} \\
    \xmub(\ovfp) \times G(\afp) \arrow{r}{i_{z}} & \shginf(\ovfp),
    \end{tikzcd}
\end{equation}
where $z$ is the image in $\shginf$ of $x$. Since $\xmube(\tau)(\ovfp) \subset \xmube(\ovfp)$ is $J_b(\qp)$-stable, it follows that its image in $\xmub(\ovfp)$ is $J_b(\qp)$-stable. Thus its image via $\kappa$ in $\pi_1(G)_I^{\sigma}$ is $J_b(\qp)$-stable. Now since $b$ is basic there is a $J_b(\qp)$-equivariant isomorphism $\pi_1(G)_I^{\sigma} = \pi_1(J_b)_I^{\sigma}$, and therefore by Lemma \ref{Lem:Fundamentalgroup} we see that $\xmube(\tau)(\ovfp)$ surjects onto $\pi_1(G)_I^{\sigma}$. The result now follows from Proposition \ref{Prop:UniformisationAndConnectedComponents} and the fact that $\pi_1(G)_I^{\sigma} \times G(\afp)$ acts transitively on $\pi_0(\shginf)$.
\end{proof}
\begin{Cor} \label{Cor:NonEmptiness}
For $w \in \admu$, the map
\begin{align}
  \shge(w) \to \pi_0(\shg)
\end{align}
is surjective. 
\end{Cor}
\begin{proof}
By \cite[Theorem 4.1]{He-Rapoport}, this follows from Corollary \ref{Cor:BasicNonEmptiness}.
\end{proof}

\section{The Cartesian diagram} \label{Sec:Cartesian}
Let the notation be as in Section \ref{Sec:Uniformisation}, in particular $\mG_{K}$ is a connected very special parahoric group scheme. Define a sheaf $\shget$ via the following fiber product diagram
\begin{equation} \label{Eq:CartesianDiagramDef}
  \begin{tikzcd}
  \shget \arrow{r} \arrow{d} & \shtemu \arrow{d} \\
  \shg \arrow{r} & \shtmu.
  \end{tikzcd}
\end{equation}
In particular, $\shget$ is $\operatorname{Sh}_{G,U',\star}$ from the introduction. Proposition \ref{Prop:Representable} tells us that $\shget$ is (representable by) a perfect algebraic space which is perfectly proper over $\shg$. The universal property of the fiber product gives us a morphism $\iota:\shge \to \shget$ and the goal of this section is to show that $\iota$ is an isomorphism, under some hypotheses. 

In Section \ref{Sec:ClosedImmersion} we will show that $\iota$ is a closed immersion. In Section \ref{Sec:PerfectlySmooth} we will show that $\shget$ is equidimensional of the same dimension as $\shg$. In Section \ref{Sec:KRConnected} we will show that each irreducible component of $\shget$ can be moved into $\shge$ using prime-to-$p$ Hecke operators. We prove this by degenerating to the zero-dimensional KR stratum, which we describe explicitly using Rapoport--Zink uniformisation of the basic locus.

\subsection{The natural map is a closed immersion} \label{Sec:ClosedImmersion}  Because the morphism $\shginf \to \shtmu$ is $G(\afp)$-equivariant, see \cite[Theorem 1.3.4]{PappasRapoportShtukas}, we can form $\shget$ for every choice of prime-to-$p$ level subgroup $U^p$. Then there is an induced action of $G(\afp)$ on $\shgetinf:=\varprojlim_{U^p} \shget$, such that the natural maps $\shgeinf \to \shgetinf$ and $\shgetinf \to \shginf$ are $G(\afp)$-equivariant. 

\subsubsection{} \label{Sec:TheCube}
Let $\mathcal{P}, \mathcal{P}'$ be the parahoric group schemes with $\mathcal{P}(\zp)=M_p$ and $\mathcal{P}'(\zp)=M_p'$, see Section \ref{Sec:ChangeOfParahoric}. Let $\shegsp$ and $\shgsp$ be the perfections of the geometric special fibers of the schemes (introduced in Section \ref{Sec:ChangeOfParahoric})
\begin{align}
  \mathscr{S}_{M^pM_p'}(G_V, \mathcal{H}_V)\otimes_{\Z_{(p)}}\mathcal{O}_{E,v} \quad \text{and} \quad \mathscr{S}_{M^pM_p}(G_V, \mathcal{H}_V)\otimes_{\Z_{(p)}}\mathcal{O}_{E,v},
\end{align}
respectively. Now consider the following commutative diagram deduced from \eqref{Eq:ParahoricForgetfulGandSiegel} (which is commutative by \cite[Theorem 4.3.1]{PappasRapoportShtukas}, see Footnote \ref{FN:PR})
\begin{equation} \label{Eq:TheCube}
  \begin{tikzcd}
  \shge \arrow{r} \arrow{drr} & \shget \arrow{rr} \arrow{dd} \arrow[dr, densely dotted] & & \shtgemu \arrow{dr} \arrow[-,d] \\
  & & \shegsp \arrow{rr} \arrow{dd} & \arrow{d} & \operatorname{Sht}_{G_V, \mathcal{P}', \{\mu\}} \arrow{dd}\\
  & \shg \arrow[-,r] \arrow{dr} & \arrow{r} & \shtgmu \arrow{dr} \\
  & & \shgsp \arrow{rr} && \operatorname{Sht}_{G_V, \mathcal{P}, \{\mu\}}.
  \end{tikzcd}
\end{equation}

\begin{Lem} \label{Lem:CartesianGSp}
The front face of the cube, that is, the square involving $\shegsp$, $\operatorname{Sht}_{G_V, \mathcal{P}', \{\mu\}}$, $\shgsp$ and $\operatorname{Sht}_{G_V, \mathcal{P}, \{\mu\}}$ is Cartesian.
\end{Lem}
\begin{proof}
The stack $\operatorname{Sht}_{G_V, \mathcal{P}', \{\mu\}}$ is\footnote{To be precise, the stack $\operatorname{Sht}_{G_V, \mathcal{P}', \{\mu\}}$ is a stack of $\mathcal{L}'$-chains of polarised Dieudonn\'e modules. By \cite[Theorem 1.2]{Lau}, for a perfect ring $R$ there is an equivalence of categories between $\mathcal{L}'$-chains of polarised Dieudonn\'e modules over $W(R)$ and $\mathcal{L}'$-chains of polarised $p$-divisible groups over $\spec R$, which gives the desired description of $\operatorname{Sht}_{G_V, \mathcal{P}', \{\mu\}}(R)$.}a moduli stack of $\mathcal{L}'$-chains of (polarised) $p$-divisible groups and the stack $\operatorname{Sht}_{G_V, \mathcal{P}, \{\mu\}}$ is a moduli stack of $\mathcal{L}$-chains of polarised $p$-divisible groups. The natural map $\shegsp \to \operatorname{Sht}_{G_V, \mathcal{P}', \{\mu\}}$ sends an $\mathcal{L}'$-chain of abelian varieties to the corresponding $\mathcal{L}'$-chain of $p$-divisible groups. The map $\shgsp \to \operatorname{Sht}_{G_V, \mathcal{P}, \{\mu\}}$ has a similar description. Moreover, the map $\operatorname{Sht}_{G_V, \mathcal{P}', \{\mu\}} \to \operatorname{Sht}_{G_V, \mathcal{P}, \{\mu\}}$ sends an $\mathcal{L}'$-chain of (polarised) $p$-divisible groups to the underlying $\mathcal{L}$-chain of (polarised) $p$-divisible groups.

The statement of the lemma now comes down to the following claim: Given an $\mathcal{L}$-chain $A_{\mathcal{L}}$ of (weakly polarised) abelian varieties, an $\mathcal{L}'$-chain $X_{\mathcal{L}'}$ of (polarised) $p$-divisible groups and an isomorphism from $A[p^{\infty}]_{\mathcal{L}}$ to the underlying $\mathcal{L}$-chain of $X_{\mathcal{L}}$, then there is a unique $\mathcal{L}'$-chain of abelian varieties $A_{\mathcal{L}'}$ with underlying $\mathcal{L}$-chain given by $A_{\mathcal{L}}$ and with $p$-divisible group $A[p^{\infty}]_{\mathcal{L}'}=X_{\mathcal{L}'}$. This claim follows from the following simpler claim: Given an abelian variety $A$ and a quasi-isogeny of $p$-divisible groups $f:A[p^{\infty}] \dashrightarrow X$, there is a unique triple $(B,\alpha,g)$ where $B$ is an abelian variety, where $\alpha:B[p^{\infty}] \to X$ is an isomorphism and $g:A \dashrightarrow B$ is a $p$-power quasi-isogeny such that $\alpha \circ g=f$. The proof of this simpler claim is explained in \cite[Section 6.13]{RapoportZink}.
\end{proof}
\begin{Lem} \label{Lem:DottedArrowCube}
The dotted arrow in \eqref{Eq:TheCube} exists.
\end{Lem}
\begin{proof}
This is an immediate consequence of Lemma \ref{Lem:CartesianGSp} and the universal property of the fiber product.
\end{proof}
\begin{Prop} \label{Prop:ClosedImmersion}
The morphism $\iota:\shge \to \shget$ induced by the universal property of $\shget$ is a closed immersion. 
\end{Prop}
We start by recalling a lemma.
\begin{Lem} \label{Lem:ProperBijection}
If $f:X \to Y$ is a perfectly proper morphism between pfp algebraic spaces over $\ovfp$ that is injective on $\ovfp$-points, then $f$ is a closed immersion.
\end{Lem}
\begin{proof}
The proof immediately reduces to the case that $X$ and $Y$ are pfp schemes over $\ovfp$. Then the image $f(X) \subset Y$ is closed and we can consider it as a subscheme with the reduced induced structure. The natural map $f:X \to f(X)$ is a bijection on $\ovfp$-points and thus an isomorphism by \cite[Corollary 6.10]{BhattScholze}; the result follows. 
\end{proof}
\begin{proof}[Proof of Proposition \ref{Prop:ClosedImmersion}]
The map $\iota$ is a morphism of perfect algebraic spaces that are perfectly proper over $\shg$, and $\iota$ is therefore perfectly proper.\footnote{Here we are using the cancellation theorem for proper morphisms, see e.g. \cite[Theorem 11.1.1]{VakilNotes}.}
By Lemma \ref{Lem:ProperBijection}, it thus suffices to prove that $\iota$ induces an injective map on $\ovfp$-points. \smallskip 
 
Now \cite[Corollary 6.3]{Z} tells us that a point $x \in \shge(\ovfp)$ is determined by its image in $\shegsp(\ovfp)$ and the tensors in the Dieudonné module of its $p$-divisible group. The tensors are determined by the image of $x$ in $\shtgemu(\ovfp)$. By Lemma \ref{Lem:DottedArrowCube}, the morphism $\shge \to \shegsp$ factors through $\shget$ and so the image of $x$ in $\shget(\ovfp)$ remembers both the image of $x$ in $\shegsp(\ovfp)$ and the image of $x$ in $\shtgemu(\ovfp)$; the lemma is proved. 
\end{proof}
\begin{Lem} \label{Lem:finite}
The morphism $f:\shget \to \shegsp$ constructed in Lemma \ref{Lem:DottedArrowCube} is finite.
\end{Lem}
\begin{proof}
By the proof of Proposition \ref{Prop:ClosedImmersion} there is a commutative diagram
\begin{equation} \label{eq:ForgetfulGAndSiegel}
  \begin{tikzcd}
  \shget \arrow{d}{\xi} \arrow{r}{f} & \shegsp \arrow{d}{\chi} \\
  \shg \arrow{r}{f'} & \shgsp
  \end{tikzcd}
\end{equation}
with $f'$ finite. It suffices to show that $f$ is quasi-finite, since its source and target are perfectly proper over
$\shgsp$.\footnote{Indeed, take a deperfection $h:Z_1 \to Z_2$ of $f$ with $Z_i$ finite type algebraic spaces, which exists by \cite[Proposition A.1.8.(3)]{XiaoZhu}. Then $h$ is proper and quasi-finite and hence finite, which implies that $f$ is finite.} It suffices moreover to prove that $f$ has finite fibers on $\ovfp$-points, by choosing a finite type deperfection using \cite[Proposition A.1.8.(3)]{XiaoZhu} and applying the usual argument to the deperfection.
\begin{Claim} \label{Claim:Fibers}
For $x \in \shg(\ovfp)$ with image $y=f'(x)$ the map
\begin{align}
  f:\xi^{-1}(x) \to \chi^{-1}(y)
\end{align}
is injective. 
\end{Claim}
Granting the claim for now, we will finish the proof: To show that $f'$ has finite fibers, we choose $y' \in \shegsp(\ovfp)$ and set $\chi(y')=y$ with inverse images $x_1, \cdots, x_n \in \shg(\ovfp)$ under $f$. Then each element of $f^{-1}(y')$ maps to $x_i$ for some $i$, which gives 
\begin{align}
    f^{-1}(y')=\coprod_{i=1}^{n} f^{-1}(y')_i.
\end{align}
But the natural maps $f^{-1}(y')_i \to \chi^{-1}(y)$ are injective by Claim \ref{Claim:Fibers}, and so each $f^{-1}(y')_i$ contains at most one element. This implies that the cardinality of $f^{-1}(y')$ is bounded by $n$.
\end{proof}
\begin{proof}[Proof of Claim \ref{Claim:Fibers}]
To prove this injectivity on fibers we return to the commutative cube from Section \ref{Sec:TheCube}, see equation \ref{Eq:TheCube}. The square involving the four objects with subscript $G$ is Cartesian by construction, and the square involving the four objects with subscript $G_V$ is Cartesian by Lemma \ref{Lem:CartesianGSp}. To prove the claim, we will make use of the following fact: Given a Cartesian diagram (of presheaves of groupoids on any category)
\begin{equation}
    \begin{tikzcd}
        A \arrow{r}{h_1}\arrow{d} \arrow{r} & A' \arrow{d}{h_2} \\
        A'' \arrow{r} & A''',
    \end{tikzcd}
\end{equation}
then for any map $x:B \to A''$ the natural map $B \times_{A'''} A' \to B \times_{A''} A$ is an equivalence. In other words, Cartesian squares induce isomorphisms on fibers of maps. Using this fact, the injectivity of the map on fibers in \eqref{eq:ForgetfulGAndSiegel} can instead be proved for the square
\begin{equation}
  \begin{tikzcd}
    \shtgemu \arrow{d} \arrow{r} & \operatorname{Sht}_{G_V, \mathcal{P}', \{\mu\}} \arrow{d} \\
    \shtgmu \arrow{r} & \operatorname{Sht}_{G_V, \mathcal{P}, \{\mu\}}.
  \end{tikzcd}
\end{equation}
Moreover, since the spaces of shtukas of type $\{\mu\}$ sit inside the spaces of all shtukas, we can reduce to showing the injectivity of the map on fibers for
\begin{equation}
  \begin{tikzcd}
    \operatorname{Sht}_{G, \emptyset} \arrow{d} \arrow{r} & \operatorname{Sht}_{G_V, \mathcal{P}'} \arrow{d} \\
    \operatorname{Sht}_{G, K} \arrow{r} & \operatorname{Sht}_{G_V, \mathcal{P}}.
  \end{tikzcd}
\end{equation}
Recall from the proof of Corollary \ref{Cor:ForgetfulRepresentable} the Cartesian diagrams (equation \eqref{Eq:ForgetfulLocalCartesianClassifying})
\begin{equation} 
  \begin{tikzcd}
  \operatorname{Sht}_{G, \emptyset} \arrow{r} \arrow{d} & \operatorname{Sht}_{G, K} \arrow{d} \\
  \mathbf{B} \lpGe \arrow{r} & \mathbf{B} \lpGk,
  \end{tikzcd} \qquad
  \begin{tikzcd}
  \operatorname{Sht}_{G_V, \mathcal{P}'} \arrow{r} \arrow{d} & \operatorname{Sht}_{G_V, \mathcal{P}} \arrow{d} \\
  \mathbf{B} L^+ \mathcal{P}' \arrow{r} & \mathbf{B} L^+\mathcal{P},
  \end{tikzcd}
\end{equation}
that fit into a commutative cube that we will not draw. This reduces the problem to showing the injectivity statement for the map on fibers in the diagram
\begin{equation}
  \begin{tikzcd}
    \mathbf{B} \lpGe \arrow{r} \arrow{d} & \mathbf{B} L^+ \mathcal{P}' \arrow{d} \\
    \mathbf{B} \lpGk \arrow{r} & \mathbf{B} L^+\mathcal{P},
  \end{tikzcd}
\end{equation}
which comes down to showing injectivity of the map of partial flag varieties
\begin{align}
  \frac{\lpGk}{\lpGe} \to \frac{L^+\mathcal{P}}{L^+ \mathcal{P}'}.
\end{align}
This last statement follows from the fact that the intersection of $L^+ \mathcal{P}'$ with $\lpGk$ is equal to $\lpGe$. This is true by construction of $\mathcal{P}, \mathcal{P}'$ and the fact that $\mathcal{G}_K$ and $\mathcal{G}_{\emptyset}$ are connected parahoric subgroups (the first by assumption, the second by Lemma \ref{Lem:IwahoriConnected}).
\end{proof}

\subsection{A perfect local model diagram} \label{Sec:PerfectlySmooth}
Consider the composition $\hat{\lambda}$ (the last arrow comes from the diagram in Remark \ref{Rem:ShtukaLocalModelII})
\begin{align}
  \shget \xrightarrow{} \shtemu \to \shtemu^{(m,1)} \to \left[ \mloce/\overline{\mG}_{\emptyset} \right].
\end{align}
We will think of this as a local model diagram for $\shget$.
\begin{Prop} \label{Prop:perfectlysmooth}
The morphism $\shget \to \left[ \mloce/\overline{\mG}_{\emptyset} \right]$ is weakly perfectly smooth and $\shget$ is equidimensional of the same dimension as $\shge$.
\end{Prop}
\begin{proof}
We will use the results of Section \ref{Sec:RestrictedLocalShtukas}.
Fix $n \ge 2$ and choose $m \gg 0$ such that the action $\operatorname{Ad}_{\sigma} \lpGk$ on $\mlocn$ factors through $\lmG$ and such that $m$ satisfies the assumption of Proposition \ref{Prop:CartesianInfiniteToFinite}. As explained in Section \ref{Sec:RestrictedShtukasShimuraVarieties} the natural morphism
\begin{align}
  \shg \to \shtmu^{(m,n), \mathrm{loc}}
\end{align}
is perfectly smooth. Combining this with the discussion in Section \ref{Sec:GeneralRestrictedShtukas}, we find that (after possibly increasing $n$) the composition with the natural map
\begin{align}
  \shtmu^{(m,n), \mathrm{loc}} \to \left[\frac{\ker \gamma \backslash \mlocinf}{\operatorname{Ad}_{\sigma} \lmG} \right]
\end{align}
is weakly perfectly smooth. Proposition \ref{Prop:CartesianInfiniteToFinite} implies that the right square in the following diagram is Cartesian
\begin{equation} \label{Eq:CartesianTimesTwo}
  \begin{tikzcd}
  \shget \arrow{r} \arrow{d} & \shtemu \arrow{d} \arrow{r} & \left[\frac{\ker \gamma \backslash \mloceinf}{\operatorname{Ad}_{\sigma} H_m} \right] \arrow{d}\\
  \shg \arrow{r} & \shtmu \arrow{r} & \left[\frac{\ker \gamma \backslash \mlocinf}{\operatorname{Ad}_{\sigma} \lmG} \right].
  \end{tikzcd}
\end{equation}
Since the left square is Cartesian by construction, it follows that the outer square is also Cartesian. Moreover, Lemma \ref{Lem:DimensionStacks} tells us that the stack in the bottom right corner of \eqref{Eq:CartesianTimesTwo} is equidimensional. We know that $\shg$ is also equidimensional and that the map 
\begin{align}
    \shg \to \left[\frac{\ker \gamma \backslash \mlocinf}{\operatorname{Ad}_{\sigma} \lmG} \right]
\end{align}
is weakly perfectly smooth. Thus by Lemma \ref{Lem:DimensionResult} this map must be weakly perfectly smooth of constant relative dimension $M$. Because the diagram in \eqref{Eq:CartesianTimesTwo} is Cartesian, it follows that the natural map
\begin{align}
  \shget \to \left[\frac{\mloceone}{\operatorname{Ad}_{\sigma} H_m} \right]
\end{align}
is also weakly perfectly smooth of constant relative dimension $M$. By Lemma \ref{Lem:DimensionStacks}, both stacks in the rightmost column of \eqref{Eq:CartesianTimesTwo} are equidimensional of the same dimension. We deduce from Lemma \ref{Lem:DimensionResult} that $\shget$ is equidimensional of the same dimension as $\shg$ and thus equidimensional of the same dimension as $\shge$.

After possibly increasing $m$, we may choose $0 \ll m' \ll m$ and invoke Lemma \ref{Lem:OneMoreWeaklyPerfectlySmooth}, which tells us that the natural map
\begin{align}
    \left[\frac{\mloceone}{\operatorname{Ad}_{\sigma} H_m} \right] \to \shtemu^{(m',1)}
\end{align}
is weakly perfectly smooth. It follows from \cite[Proposition 4.2.5]{ShenYuZhang} that the natural map
\begin{align}
  \shtemu^{(m',1)} &\to \left[ \mloce/\overline{\mG}_{\emptyset} \right]
\end{align}
is weakly perfectly smooth. Therefore the map $\hat{\lambda}:\shget \to \left[ \mloce/\overline{\mG}_{\emptyset} \right]$ is a composition of weakly perfectly smooth maps, and hence weakly perfectly smooth. 
\end{proof}
\subsubsection{} For $w \in \admu$, we define the Kottwitz--Rapoport (KR) stratum $\shget(w)$ to be the inverse image of the locally closed substack
\begin{align}
  \left[ \mloce(w)/\overline{\mG}_{\emptyset} \right] \subset \left[ \mloce/\overline{\mG}_{\emptyset} \right]
\end{align}
under the weakly perfectly smooth map $\hat{\lambda}:\shget \to \left[ \mloce/\overline{\mG}_{\emptyset} \right]$. Similarly, we define
\begin{align} \label{Eq:DefinitionKR}
  \shget(\le \! w):=\bigcup_{w' \le w} \shget(w'),
\end{align}
which is the same as the closure of $\shget(w)$ because $\hat{\lambda}$ is open and since the closure relations hold on $\mloce$, see Section \ref{Sec:RelPos}.
\begin{Cor} \label{Cor:Normal}
For $w \in \admu$, the closure $\shget(\le \! w)$ has dimension $\ell(w)$ and is normal. 
\end{Cor}
\begin{proof}
Let $d=\operatorname{Dim} \shge=\operatorname{Dim} \mloce$. Then the local model $\mloce$ is the union of $\mloce(\le \! w)$ for $w \in \admu$ of length $d$, and for such $w$ the KR stratum $\mloce(\le \! w)$ is equidimensional of dimension $d$ and stable under the action of $\overline{\mG}_{\emptyset}$. Using $\hat{\lambda}$, we see that 
\begin{align}
  \shget = \bigcup_{\substack{w \in \admu \\ \ell(w)=d}} \shget(\le \! w),
\end{align}
and since $\shget$ is equidimensional of dimension $d$, it follows that for $w$ with $\ell(w)=d$ we have that $\shget(\le \! w)$ is equidimensional of dimension $d=\ell(w)$. We can now apply Lemma \ref{Lem:DimensionResult} to deduce that $\hat{\lambda}$ is weakly perfectly smooth of relative dimension $0$. We can apply Lemma \ref{Lem:DimensionResult} again to deduce the dimension results for $\shget(\le \! w)$ for arbitrary $w$, from the dimension results for $\mloce(\le \! w)$ from Section \ref{Sec:RelPos}.

The morphism $\shget \to \left[ \mloce/\overline{\mG}_{\emptyset} \right]$ is (by definition) the same as a diagram
\begin{equation}
  \begin{tikzcd}
  & \tshget \arrow[dl,swap,"s"] \arrow{dr}{t} \\
  \shget & & \mloce,
  \end{tikzcd}
\end{equation}
where $s:\tshget \to \shget$ is a $\overline{\mathcal{G}_{\emptyset}}=L^1 \mG_{\emptyset}$-torsor. Since both $s$ and $t$ are surjective and weakly perfectly smooth, the normality of $\shget(\le \! w)$ follows from the normality of $\mloce(\le \! w)$ by Lemma \ref{Lem:WeaklyPerfectlySmoothNormal}. 
\end{proof}
We now give a corollary of Lemma \ref{Lem:finite}.
\begin{Cor} \label{Cor:KRQuasiAffine}
For $w \in \admu$, the KR stratum $\shget(w)$ is quasi-affine.
\end{Cor}
\begin{proof}
Section \ref{Sec:ChangeOfParahoric} and in particular equation \eqref{Eq:ParahoricForgetfulGandSiegel} shows that there is a commutative diagram where all the horizontal maps are finite
\begin{equation} \label{Eq:ParahoricForgetfulGandSiegelII}
  \begin{tikzcd}
   \mathscr{S}_{U'}(G,X) \arrow{d}{\pi_{\emptyset,K}} \arrow{r} & \mathscr{S}_{M^pM_p'}(G_V, \mathcal{H}_V)\otimes \mathcal{O}_{E,(v)} \arrow{d} \arrow{r} &  \mathscr{S}_{J^{'p} J_p'}(G_{V_{\mathcal{L}'}} \mathcal{H}_{V_{\mathcal{L}'}}) \otimes \mathcal{O}_{E,(v)} \\
   \mathscr{S}_{U}(G,X) \arrow{r} & \mathscr{S}_{M^pM_p}(G_V, \mathcal{H}_V)\otimes \mathcal{O}_{E,(v)} \arrow{r} & \mathscr{S}_{J^{p} J_p}(G_{V_{\mathcal{L}}},\mathcal{H}_{V_{\mathcal{L}}}) \otimes \mathcal{O}_{E,(v)}.
  \end{tikzcd}
\end{equation}
Using Zarhin's trick as in \cite[Remark 2.1.4]{ShenYuZhang} or \cite[Section 1.3.3]{KisinPoints}), there is moreover a finite map $\mathscr{S}_{J^{'p} J_p'}(G_{V_{\mathcal{L}'}} \mathcal{H}_{V_{\mathcal{L}'}}) \to \mathscr{S}_{Q^p Q_p}(G_{V''}, \mathcal{H}_{V''})$, where $V''=V_{\mathcal{L'}}^{\oplus 4} \oplus V_{\mathcal{L'}}^{\ast, \oplus 4}$ and where $\psi''$ is given by a certain explicit matrix. Here $Q_p$ corresponds to the self dual lattice $V_{\mathcal{L'},p}^{\oplus 4} \oplus V_{\mathcal{L'},p}^{\ast,\oplus 4}$ and $Q^p \subset G_{V''}(\afp)$ is sufficiently small. By Lemma \ref{Lem:finite}, the pullback $\mathcal{E}$ of the (ample) Hodge bundle from the perfection of $\mathscr{S}_{Q^p Q_p,\ovfp}(G_{V''}, \mathcal{H}_{V''})$ to $\shget$ is ample. \smallskip 

By construction, see Lemma \ref{Lem:CartesianGSp}, the left square in the following diagram commutes 
\begin{equation}
  \begin{tikzcd}
  \shget \arrow{r} \arrow{d} & \shegsp \arrow{d} \arrow{r} & \mathscr{S}_{Q^p Q_p,\ovfp}(G_{V''}, \mathcal{H}_{V''})^{\mathrm{perf}} \arrow{d} \\ 
  \shtemu  \arrow{r} & \operatorname{Sht}_{G_V, \mathcal{P}', \{\mu\}} \arrow{r} & \operatorname{Sht}_{G_{V''},Q_p, \{\mu\}}.
  \end{tikzcd}
\end{equation}
The right square moreover commutes because Zarhin's trick is given by a morphism of Shimura data, and then we can use \cite[Corollary 4.3.2]{PappasRapoportShtukas} as in Lemma \ref{Lem:NotCartesianYet}. \smallskip

The arguments in the proof of \cite[Theorem 3.5.9]{ShenYuZhang} now show that the restriction of $\mathcal{E}$ to the KR stratum $\shget(w)$ for $w \in \admu$ is a torsion ample line bundle from which it follows that $\shget(w)$ is quasi-affine. To elaborate, their arguments show that the Hodge bundle on $\mathscr{S}_{Q^p Q_p,\ovfp}(G_{V''}, \mathcal{H}_{V''})^{\mathrm{perf}}$ comes via pullback from a line bundle $\mathcal{F}$ on $\operatorname{Sht}_{G_{V''},Q_p, \{\mu\}}$. They then show that if we pull back $\mathcal{F}$ to $\shtemu$ and restrict to a KR stratum, that the resulting line bundle is torsion. 
\end{proof}
\subsection{Connected components of closures of KR strata} \label{Sec:KRConnected} The goal of this section is to understand, for $w \in \admu$, the fibers of 
\begin{align*}
\pi_0(\shget(\le \! w)) \to \pi_0(\shg).
\end{align*}
Here $\shget(\le \! w)$ is the closure of the KR stratum $\shget(w)$, see equation \eqref{Eq:DefinitionKR}. We will eventually reduce this to understanding the fibers of
\begin{align*}
\shget(\tau) \to \pi_0(\shg),
\end{align*}
where $\tau \in \admu$ is the unique element of length zero. To make this reduction, we will show that each connected component of $\shget(\le \! w)$ intersects $\shget(\tau)$. This will require us to assume that either $\mathbf{Sh}_{U}(G,X)$ is proper or that $G_{\mathbb{Q}_p}$ is unramified. More generally, we require that Conjecture \ref{Conj:KR} below holds. Recall that there are EKOR strata $\shg\{w\}$ for $w \in \kadmu$, see Section \ref{Sec:EKOR} and Section \ref{Sec:RestrictedShtukasShimuraVarieties}, with closures $\shg\{\preceq w\}$.
\begin{Conj} \label{Conj:KR}
If $V$ is an irreducible component of $\shg\{\preceq \! w \}$ for some $w \in \kadmu$, then $V$ intersects the unique $0$-dimensional EKOR stratum $\shg\{\tau\}$.
\end{Conj}
\begin{Rem} \label{Rem:ConjKRUnramified}
When $\mathcal{G}_K$ is hyperspecial, then Conjecture \ref{Conj:KR} is \cite[Proposition 6.20]{WedhornZiegler}; the assumption made in the statement of this proposition is proved in \cite{Andreatta}. When $G^{\mathrm{ad}}$ is $\mathbb{Q}$-simple, a proof of the conjecture will appear in the forthcoming PhD thesis of Shengkai Mao, see \cite{MaoCompact}. When $\mathbf{Sh}_U(G,X)$ is proper, we will circumvent the conjecture using Lemma \ref{Lem:RaynaudArgument} below. This is where the 'either unramified or proper' assumption in Theorems \ref{Thm:Uniformisation}, \ref{Thm:HeRapoport} and \ref{Thm:EKOR} comes from.
\end{Rem}

\subsubsection{} We start by proving a lemma, where we recall that $\tau \in \admu$ is the unique element of length zero.
\begin{Lem} \label{Lem:RaynaudArgument}
Let $Z$ be a connected component of $\shget(\le \! w)$. Suppose that there exists a KR stratum $\shget(x)$ such that $Z \cap \shget(x)$ is non-empty and such that $\shget(\le \! x)$ is perfectly proper over $\spec k$. Then $Z$ intersects $\shget(\tau)$. 
\end{Lem}
\begin{proof}
Let $\shget(x)$ be as in the statement of the lemma. Then there is an $x' \le x$ of minimal length such that $\shget(x') \cap Z \not= \emptyset$, and it suffices to prove that this length is equal to zero. The minimality tells us that
\begin{align} \label{Eq:proper}
  \shget(x') \cap Z = \shget(\le \! x') \cap Z,
\end{align}
since $\shget(\le \! x') \setminus \shget(x')$ is a union of KR strata associated to $x'' \in \admu$ of length strictly smaller than $x'$. Next, we note that $Z \cap \shget(x')$ is a union of connected components of $\shget(x')$, because $\shget(x') \subset \shget(\le \! w)$ and so connected components of $\shget(x')$ are either disjoint from $Z$ or contained in $Z$. 

Since $\shget(x')$ is quasi-affine by Corollary \ref{Cor:KRQuasiAffine}, we find that $\shget(x') \cap Z$ is quasi-affine. Moreover \eqref{Eq:proper} implies that $\shget(x') \cap Z \subset \shget(\le \! x)$ is closed, hence perfectly proper over $\spec k$. Therefore $\shget(x') \cap Z$ is perfectly proper and quasi-affine, and thus zero-dimensional. Since it is a union of connected components of $\shget(x')$, it follows from Corollary \ref{Cor:Normal} that $x'$ has length zero and must therefore be equal to $\tau$.
\end{proof}
We will deduce the same result from Conjecture \ref{Conj:KR} when the Shimura variety is not proper. 
\begin{Prop} \label{Prop:KRZero}
If Conjecture \ref{Conj:KR} holds, then for $w \in \admu$ every connected component $Z$ of $\shget(\le \! w)$ intersects $\shget(\tau)$. 
\end{Prop} 
First, we prove two lemmas. Recall from \cite[Section 1.3]{HZ} that an element $w\in \tilde W$ is said to be $\sigma$-straight if $$n\ell(w)=\ell(w\sigma(w)\dotsc\sigma^{n-1}(w))$$ for all positive integers $n$. 
\begin{Lem} \label{Lem:SigmaStraight}
Let  $Z \subset \shget(\le \! w)$ be a connected component. If $x\in \admu$ is of minimal length with the property that $Z \cap \shget(x) \not=\emptyset$, then $x$ is $\sigma$-straight.
\end{Lem}
\begin{proof}
Arguing as in the proof of Lemma \ref{Lem:RaynaudArgument} above, we see that the intersection $Z \cap \shget(x)$ is a union of connected components of $\shget(x)$. Let $V$ be one of these components, then $V$ is closed in $\shget(\le \! x)$ as in the proof of Lemma \ref{Lem:RaynaudArgument}. Moreover, $V$ is actually a connected component of $\shget(\le \! x)$; it is an irreducible component for dimension reasons and thus a connected component since $\shget(\le \! x)$ is normal (see Corollary \ref{Cor:Normal}). 

Let $z \in V(\ovfp)$ with image $\pi(z) \in \shg(\ovfp)$, and consider the uniformisation map
\begin{align}
  i_{\pi(z)}:\xmub(\ovfp) \to \shg(\ovfp),
\end{align}
centered at $\pi(z)$, where $b$ corresponds to $\pi(z)$. By the proof of \cite[Proposition 5.2.2]{HeZhouZhu} we can upgrade this to a morphism of perfect schemes $i_{\pi(z)}:\xmub \to \shg$. As in the proof of Theorem \ref{Thm:LiftingUniformisation}, see the discussion in Section \ref{Sec:BasicUniformisation} below, it follows that there is an induced map
\begin{align}
  i_{z}:\xmube \to \shget
\end{align}
whose image contains $z$. Indeed, this follows from the construction of $i_z$ below and the surjectivity of $\shget \to \shg$. Since the uniformisation map is compatible with the KR stratification, this restricts to a map
\begin{align}
    \xmube(\le \! x) \to \shget(\le \! x)
\end{align}
whose image contains $z$. This means that there is a connected component $Y$ of $\xmube(\le \! x)$ that maps to $V$. Now \cite[Theorem 4.1]{HZ} tells us that there is a $\sigma$-straight element $x' \le x$ in $\admu$ such that $Y \cap \xmube(x') \not=\emptyset$. In particular, $\shget(x') \cap V \not= \emptyset$ and so $\shget(x') \cap Z \not=\emptyset$. Since $x$ has been chosen to be minimal with the property that $\shget(x') \cap Z \not=\emptyset$, we see that $x=x'$ and so $x$ is $\sigma$-straight.
\end{proof}
\begin{Lem} \label{Lem:FiniteEtale}
Let $x \in \admu$ be $\sigma$-straight. Then there is $y \in \kadmu$ such that the natural map $\shget(x) \to \shg$ factors via a finite \'etale map $\shget(x) \to \shg\{y\}$ and such that $\ell(y)=\ell(x)$.
\end{Lem}
\begin{proof}
By the proof of \cite[Theorem 6.17]{He-Rapoport}, there is an element $v \in W_K$ such that $y:= v x \sigma(v)^{-1}$ lies in $\kadmu$ and such that $\ell(y)=\ell(x)$. It follows from \cite[the discussion prior to Theorem 6.10]{He-Rapoport} that the image of $\shtgemu(x)(\ovfp)$ in $\sht(\ovfp)$ is equal to $\shtgmu\{y\}(\ovfp)$.\footnote{Recall that for $y \in \kadmu$ we use $\{y\}$ to denote the corresponding EKOR stratum, see Section \ref{Sec:EKOR}.} Since KR strata and EKOR strata on $\shget$ and $\shg$ respectively are defined as the inverse images of KR strata and EKOR strata in $\shtemu$ and $\shtmu$, and because these strata are determined by their $\ovfp$-points, we deduce that the image of $\shget(x) \to \shg$ is equal to $\shg\{y\}$.

To prove that the induced map is finite \'etale, we may use diagram \eqref{Eq:CartesianDiagramDef} to reduce to checking finite \'etale-ness of $\shtgemu(x) \to \shtgmu\{y\}$, and then Lemma \ref{Lem:LocalUniformisation} to reduce to checking this for $\xmube(x) \to \xmub\{y\}$. By \cite[Proposition 4.6]{HZ}, the locally perfectly of finite type perfect schemes $\xmube(x)$ and $\xmub\{y\}$ are zero-dimensional. Thus they have an affine open cover by zero-dimensional perfectly of finite type affine perfect schemes, which must be finite disjoint unions of $\spec \ovfp$ (since zero-dimensional reduced finite type affine schemes over $\spec \ovfp$ are). This implies that both $\xmube(x)$ and $\xmub\{y\}$ are disjoint unions of $\spec \ovfp$, which in particular implies that they are \'etale over $\spec \ovfp$. Thus the map $\xmube(x) \to \xmub\{y\}$ is \'etale and it suffices to show that it is finite \'etale, which comes down to showing it is quasi-finite. For thus, we note that $J_b(\qp)$ acts transitively on $\xmube(x)(\ovfp)$ by \cite[Theorem 5.1]{HZ}, with stabiliser a compact open subgroup, cf. \cite[Proposition 3.1.4]{ZhouZhu}, and the same holds for $\xmub\{y\}(\ovfp)$. Thus we may identify $\xmube(x) \to \xmub\{y\}$ with
\begin{align}
    \coprod_{J_b(\qp)/N} \spec \ovfp \to \coprod_{J_b(\qp)/N'} \spec \ovfp,
\end{align}
where $N \subset N'$ are compact open subgroups of $J_b(\qp)$. Since $N$ has finite index in $N'$, it follows that $\xmube(x) \to \xmub\{y\}$ is finite \'etale. 
\end{proof}
\begin{proof}[Proof of Proposition \ref{Prop:KRZero}]
Let $x \in \admu$ be of minimal length with the property that $Z \cap \shget(x) \not=\emptyset$, we would like to show that $\ell(x)=0$. Arguing as in the proof of Lemma \ref{Lem:RaynaudArgument} above, we see that the intersection $Z \cap \shget(x)$ is a union of connected components of $\shget(x)$ and that $Z \cap \shget(x)$ is closed in $\shget(\le \! x)$. Let $V$ be one of these components, then $V$ has dimension $\ell(x)$ and $V$ is closed inside $\shget(\le \! x)$. Thus $V$ must be an irreducible component of $\shget(\le \! x)$.

By Lemma \ref{Lem:SigmaStraight} we see that $x$ is $\sigma$-straight. By Lemma \ref{Lem:FiniteEtale} there exists $y \in \kadmu$ such that the natural map $\shget(w) \to \shg$ factors via a finite \'etale map $\shget(w) \to \shg\{y\}$ and such that $\ell(x)=\ell(y)$. We conclude that the image of $V$ in $\shg\{y\}$ is an irreducible component of $\shg\{y\}$. Since $V$ is closed in $\shget(\le \! x)$ and thus in $\shget$, and since the map $\shget \to \shg$ is perfectly proper, it follows that $\pi(V)$ is closed in $\shg$. Therefore $\pi(V)$ is closed inside $\shg\{\preceq y\}$, the closure of $\shg\{y\}$, and therefore an irreducible component of $\shg\{\preceq y\}$.

Conjecture \ref{Conj:KR} tells us that $\pi(V)$ intersects the zero-dimensional EKOR stratum $\shg\{\tau\}$, and since $\pi(V) \subset \shg\{y\}$ it follows that $\tau=y$ and so that $0=\ell(y)=\ell(x)$. It follows that $x=\tau$ and so we are done.
\end{proof}
\subsubsection{} \label{Sec:BasicUniformisation} We will explicitly analyse the basic KR stratum $\shget(\tau)$, where $\tau \in \admu$ is the unique element of length zero. Let $x \in \shgetinf(\tau)(\ovfp)$ with image $\pi(x) \in \shg(\ovfp)$ and choose an isomorphism $\mathbb{D}_x \simeq V_{\mathbb{Z}_p} \otimes_{\zp} \zpbr$ sending $s_{\alpha, 0,x}$ to $s_{\alpha} \otimes 1$. Let $b \in G(\qpbr)$ be the element corresponding to the Frobenius of $\mathbb{D}_x$ under this isomorphism. Let $I_x$ be the algebraic group $I_{\pi(x)}$ introduced in Section \ref{Sec:IsogenyClasses} and let
\begin{align} \label{Eq:InducedMorphisms}
    j_{x}^p:I_{x, \afp} &\to G_{\afp} \\
    j_{x,p}:I_{x, \qp} &\to J_b
\end{align}
be the maps induced by the choices made above. Then by \cite[Proposition 5.2.2]{HeZhouZhu}, there is an isomorphism of perfect schemes (where $\shgb \subset \shg$ denotes the Newton stratum associated to $[b]$)
\begin{align}
  i_{\pi(x)}:I_x(\mathbb{Q}) \backslash \left( \xmub \times G(\afp) / U^p \to \shgb \right),
\end{align}
where $I_x(\mathbb{Q})$ acts on $G(\afp)$ via $j_{x}^p$ and on $\xmub$ via $j_{x,p}:I(\mathbb{Q}) \to J_b(\qp)$ and then the natural action of $J_b(\qp)$ on $\xmub$. Here we consider the discrete topological space $G(\afp) / U^p$ as a discrete scheme, and we are taking the quotient of $\xmub \times G(\afp) / U^p$ by $I_x(\mathbb{Q})$ in the pro-\'etale topology. Moreover, it follows from \cite[Proposition 5.2.6]{HeZhouZhu} that $j_{x}^p$ and $j_{x,p}$ are isomorphisms and that $I(\mathbb{R})$ is compact mod centre. 

\subsubsection{} Consider the Cartesian diagram \newcommand{\shtmueb}{\operatorname{Sht}_{G,\emptyset, \mu,[b]}}
\begin{equation} \label{Eq:DiagramNotYetCartesianBasic}
  \begin{tikzcd}
   \shgetb \arrow{d} \arrow{r} & \shtmueb \arrow{d} \\
  \shgb \arrow{r} & \shtmub,
  \end{tikzcd}
\end{equation}
Applying Lemma \ref{Lem:LocalUniformisation} to $\shtmueb$ and $\shtmub$ and using $i_x$ we can identify \eqref{Eq:DiagramNotYetCartesianBasic} with
\begin{equation}
  \begin{tikzcd}
  \shgetb \arrow{d} \arrow{r} & \left[\frac{\xmube}{\underline{J_b(\mathbb{Q}_p)}} \right] \arrow{d} \\
  I_x(\mathbb{Q}) \backslash \xmub \times G(\afp) / U^p \arrow{r} & \left[\frac{\xmub}{\underline{J_b(\mathbb{Q}_p)}} \right].
  \end{tikzcd}
\end{equation}
By Lemma \ref{Lem:CompatibilityUniformisationJointStratification} the map (induced by the bottom horizontal map)
\begin{align}
  \xmub \times G(\afp) \to\left[\frac{\xmub}{\underline{J_b(\mathbb{Q}_p)}} \right]
\end{align}
is the natural projection map onto the first factor followed by the natural map to the quotient. As in the proof of Theorem \ref{Thm:LiftingUniformisation}, it follows that there is an isomorphism
\begin{align} \label{eq:BasicShget}
i_x:I_x(\mathbb{Q}) \backslash \xmube \times G(\afp) / U^p \to \shgetb
\end{align}
such that the map (coming from the left vertical map in \eqref{Eq:DiagramNotYetCartesianBasic})
\begin{align}
    I_x(\mathbb{Q}) \backslash \xmube \times G(\afp) / U^p \to I_x(\mathbb{Q}) \backslash \xmub \times G(\afp) / U^p,
\end{align}
is induced by the natural projection $\xmube \to \xmub$ and the identity of $G(\afp)$. 

\subsubsection{} To analyse the fibers of $\shget(\tau) \to \pi_0(\shg)$, we will first analyse the fibers of $\xmube(\tau) \to \pi_1(G)_I^{\sigma}$. Let $J_b^{\mathrm{sc}} \to J_b^{\mathrm{der}}$ be the simply connected cover of the derived group $J_b^{\mathrm{der}}$ of $J_b$.
\begin{Lem} \label{Lem:LocalTransitiveAction}
The group $J_b^{\mathrm{sc}}(\mathbb{Q}_p)$ acts transitively on the fibers of
\begin{align*}
\xmube(\tau) \to \pi_1(G)_I^{\sigma}.
\end{align*}
\end{Lem}
\begin{proof}
The element $\tau$ is $\sigma$-straight and so $J_b(\qp)$ acts transitively on $\xmube(\tau)$ by \cite[Theorem 4.8]{He1}. The stabiliser of a point is a parahoric subgroup $N_p \subset J_b(\mathbb{Q}_p)$ by \cite[Proposition 3.1.4]{ZhouZhu}. Therefore our map can be identified with the natural map
\begin{align}
  \xmube(\tau)=\frac{J_b(\mathbb{Q}_p)}{N_p} \to \frac{J_b(\mathbb{Q}_p)}{N_p J^{\mathrm{sc}}(\mathbb{Q}_p)}= \pi_1(J_b)_I^{\sigma} = \pi_1(G)_I^{\sigma},
\end{align}
using Lemma \ref{Lem:Fundamentalgroup} and the fact that $b$ is basic in the last step, and the result follows.
\end{proof}
\subsubsection{} \label{Sec:Borovoi} The goal of this subsection is to prove an auxiliary result. Let $\mathbb{G}$ and $\mathbb{H}$ be connected reductive groups over $\mathbb{Q}$ that are inner forms of each other, and such that they are isomorphic over $\afp$. Fix an identification $\mathbb{G} \otimes \afp \simeq \mathbb{H} \otimes \afp$ and an inner twisting $\Psi:\mathbb{G}_{\qbar} \to \mathbb{H}_{\qbar}$, which induces an isomorphism of centers $Z(\mathbb{G}) \to Z(\mathbb{H})$ and $\pi_1(\mathbb{G})_I^{\sigma} \simeq \pi_1(\mathbb{H})_I^{\sigma}$. Recall the notation $\mathbb{G}(\mathbb{R})_{+}$ and $\mathbb{G}(\mathbb{Q})_{+}$ from Section \ref{Sec:UniformisationAndPiNaught}.
\begin{Prop}[Borovoi] \label{Prop:Borovoi}
The images of $\mathbb{G}(\mathbb{Q})_{+}$ and $\mathbb{H}(\mathbb{Q})_{+}$ in
\begin{align}
  \frac{\mathbb{G}(\afp)}{\rho(\mathbb{G}^{\mathrm{sc}}(\afp))} \times \pi_1(\mathbb{G})_I^{\sigma}
\end{align}
are equal (after applying our fixed identifications).
\end{Prop}
The following arguments have been reproduced and adapted with permission from Mikhail Borovoi's Mathoverflow answer \cite{BorovoiMO}; we will use \cite[Section 3]{Borovoi}. We consider the \emph{crossed module} $(\mathbb{G}^{\rm sc}\to \mathbb{G})$ and the \emph{hypercohomology}
$$H^0_{\rm ab}({\mathbb Q},\mathbb{G}):=H^0({\mathbb Q},\mathbb{G}^{\rm sc}\to \mathbb{G}),$$
where $\mathbb{G}$ is in degree 0;
see \cite{Borovoi}. The cohomology set $H^0_{\rm ab}({\mathbb Q},\mathbb{G})$ is naturally an abelian group that does not change under inner twisting of $\mathbb{G}$. The short exact sequence
$$1\to(1\to \mathbb{G})\to (\mathbb{G}^{\rm sc}\to \mathbb{G})\to (\mathbb{G}^{\rm sc}\to 1)\to 1$$
induces a hypercohomology exact sequence
$$ \mathbb{G}^{\rm sc}({\mathbb Q})\to \mathbb{G}({\mathbb Q})\to H^0_{\rm ab}({\mathbb Q},\mathbb{G})\to H^1({\mathbb Q},\mathbb{G}^{\rm sc}),$$
where 
$${\rm ab}^0\colon \mathbb{G}({\mathbb Q})\to H^0_{\rm ab}({\mathbb Q},\mathbb{G})$$
is the \emph{abelianisation map}. Let $Z$ be the center of $\mathbb{G}$, then it follows from the definition of $\mathbb{G}({\mathbb R})_+$ and the connectedness of $\mathbb{G}^{\rm sc}({\mathbb R})$, that $$\mathbb{G}({\mathbb R})_+=Z({\mathbb R})\cdot \rho(\mathbb{G}^{\rm sc}({\mathbb R})),$$ and hence
$$\mathbb{G}({\mathbb R})_+/\rho (\mathbb{G}^{\rm sc}({\mathbb R}))={\rm ab}^0(Z({\mathbb R}))\subset
{\rm ker}[ H^0_{\rm ab}({\mathbb R},\mathbb{G})\to H^1({\mathbb R}, \mathbb{G}^{\rm sc})].$$
We see that the image of $\mathbb{G}(\mathbb{Q})_{+}$ in $H^0_{\rm ab}({\mathbb Q},\mathbb{G})$ can be identified with the preimage of ${\rm ab}^0(Z({\mathbb R}))\subset H^0_{\rm ab}({\mathbb R},\mathbb{G})$ in ${\rm ker}[H^0_{\rm ab}({\mathbb Q},\mathbb{G})\to H^1({\mathbb Q},\mathbb{G}^{\rm sc})]$ under the natural map 
\begin{align}
  f:{\rm ker}[H^0_{\rm ab}({\mathbb Q},\mathbb{G})\to H^1({\mathbb Q},\mathbb{G}^{\rm sc})] \to {\rm ker}[ H^0_{\rm ab}({\mathbb R},\mathbb{G})\to H^1({\mathbb R}, \mathbb{G}^{\rm sc})].
\end{align}

\begin{Lem} The preimage of ${\rm ab}^0(Z({\mathbb R}))\subset H^0_{\rm ab}({\mathbb R},\mathbb{G})$ in ${\rm ker}[H^0_{\rm ab}({\mathbb Q},\mathbb{G})\to H^1({\mathbb Q},\mathbb{G}^{\rm sc})]$ under $f$
coincides with the preimage of ${\rm ab}^0(Z({\mathbb R}))$ in $H^0_{\rm ab}({\mathbb Q},\mathbb{G})$.
\end{Lem}
\begin{proof}
Let $\xi\in H^0_{\rm ab}({\mathbb Q},\mathbb{G})$ lie in the preimage of
$${\rm ab}^0(Z({\mathbb R}))\subset
{\rm ker}[ H^0_{\rm ab}({\mathbb R},\mathbb{G}) \to H^1({\mathbb R}, \mathbb{G}^{\rm sc})].$$
Then the image of $\xi$ in $H^1({\mathbb R},\mathbb{G}^{\rm sc})$ is trivial, and therefore,
the image of $\xi$ in $H^1({\mathbb Q},\mathbb{G}^{\rm sc})$ lies in the kernel of the localisation map $$ H^1({\mathbb Q}, \mathbb{G}^{\rm sc})\to H^1({\mathbb R},\mathbb{G}^{\rm sc}).$$
By the Hasse principle for simply connected groups (\cite[Theorem 6.6]{PR}), this kernel is trivial.
Thus the image of $\xi$ in $H^1({\mathbb Q},\mathbb{G}^{\rm sc})$ is trivial, and hence $\xi$ lies in the preimage of ${\rm ab}^0(Z({\mathbb R}))$ in ${\rm ker}[H^0_{\rm ab}({\mathbb Q},\mathbb{G})\to H^1({\mathbb Q},\mathbb{G}^{\rm sc})]$, as required. \end{proof}

\begin{Cor} \label{Cor:Borovoi} The image of the abelianisation map $\mathbb{G} ({\mathbb Q})_{+}\to H^0_{\rm ab}({\mathbb Q},\mathbb{G})$ is the preimage of ${\rm ab}^0(Z({\mathbb R}))\subset H^0_{\rm ab}({\mathbb R},\mathbb{G} )$ in $H^0_{\rm ab}({\mathbb Q},\mathbb{G} )$. 
\end{Cor}
\begin{proof}[Proof of Proposition \ref{Prop:Borovoi}]
It is clear from Corollary \ref{Cor:Borovoi} and the discussion above that the image of $\mathbb{G}({\mathbb Q})_{+} \to H^0_{\rm ab}({\mathbb Q},\mathbb{G})$ is the same for all inner forms. Thus the images of $\mathbb{H}(\mathbb{Q})_{+}$ and $\mathbb{G}(\mathbb{Q})_{+}$ in $H^0_{\rm ab}({\mathbb Q},\mathbb{G})=H^0_{\rm ab}({\mathbb Q},\mathbb{H})$ are equal. 

To prove the proposition, we simply note that the following diagram commutes
\begin{equation}
  \begin{tikzcd}
  \frac{\mathbb{G}(\mathbb{Q})_{+}}{\rho(\mathbb{G}^{\mathrm{sc}}(\mathbb{Q}))} \arrow[r, hook] \arrow{d} & H^0_{\rm ab}({\mathbb Q},\mathbb{G}) \arrow{d} \\
  \frac{\mathbb{G}(\af)}{\rho(\mathbb{G}^{\mathrm{sc}}(\af))} \arrow[r, hook] & \prod_{v \not=\infty} H^0_{\rm ab}({\mathbb Q}_v,\mathbb{G}).
  \end{tikzcd}
\end{equation}
and that $\pi_1(\mathbb{G})_I^{\sigma}$ is a quotient of $\mathbb{G}(\qp)/\rho (\mathbb{G}^{\mathrm{sc}}(\qp))$ by Lemma \ref{Lem:Fundamentalgroup}.
\end{proof}

\begin{Prop} \label{Prop:GlobalActionTransitive}
Let $\Sigma$ be a finite set of primes with $p \in \Sigma$. Then $\gsc(\afs)$ acts transitively on the fibers of 
\begin{align*}
\shgetinf(\tau) \to \pi_0(\shginf).
\end{align*}
\end{Prop}
\begin{proof}
Let $z \in \shgetinf(\tau)(\ovfp)$, where $\shgetinf(\tau):=\varprojlim_{U^p} \shget(\tau)$, with image $x \in \shginf(\ovfp)$. Choose an isomorphism $\mathbb{D}_x \simeq V_{\mathbb{Z}_p} \otimes_{\zp} \zpbr$ sending $s_{\alpha, 0,x}$ to $s_{\alpha} \otimes 1$ and let $b \in G(\qpbr)$ be the element corresponding to the Frobenius of $\mathbb{D}_x$ under this isomorphism. Then as explained in Section \ref{Sec:BasicUniformisation}, we get an isomorphism
\begin{align}
    i_z:\varprojlim_{U^p} I_x(\mathbb{Q}) \backslash \xmube\times G(\afp)/U^p \to \varprojlim_{U^p} \shgetb=:\shgetinfb.
\end{align}
Since the uniformisation is compatible with the KR stratifications on both sides, see Lemma \ref{Lem:CompatibilityUniformisationJointStratification}, this induces an isomorphism
\begin{align}
    \varprojlim_{U^p} I_x(\mathbb{Q}) \backslash \xmube(\tau) \times G(\afp)/U^p \to \varprojlim_{U^p} \shget(\tau)=\shgetinf(\tau).
\end{align}
We also note that the natural map $I_x(\mathbb{Q}) \backslash \xmube(\tau) \times G(\afp) \to \varprojlim_{U^p} I_x(\mathbb{Q}) \backslash \xmube(\tau) \times G(\afp)/U^p$ is a bijection by \cite[Lemma 4.20]{Milne}, as in the second paragraph of Section \ref{Sec:UniformisationAndPiNaught}.

Using the base point $x \in \shginf(\ovfp)$ to trivialise the $\pi(G)$-torsor $\pi_0(\shginf)$, see the beginning of Section \ref{Sec:UniformisationAndPiNaught}, we get an isomorphism of profinite sets
\begin{align}
    \pi(G) &\to \pi_0(\shginf) \\
    g &\mapsto g \cdot Z_{x},
\end{align}
where $Z_{x}$ is the connected component containing $x$. By the discussion in Section \ref{Sec:StrongApproximation} there is an isomorphism of topological groups
\begin{align}
    \pi(G) = G(\mathbb{Q})_{+} \backslash \pi_1(G)_I^{\sigma} \times \frac{G(\afp)}{\rho(\gsc(\afp))}.
\end{align}
By Proposition \ref{Prop:UniformisationAndConnectedComponents} the map
\begin{align}
    \alpha_x:I_x(\mathbb{Q}) \backslash \pi_0(\xmube) \times G(\afp) \to \pi_0(\shginf),
\end{align}
induced by $i_x$, satisfies $\alpha_x(y, g^p) = (\kappa(y), g^p) \cdot Z_{x}$, where $\kappa(y) \in \pi_1(G)_I^{\sigma}$ is the image of $y$ and $g^p \in G(\afp)$. Hence our identifications fit in a commutative diagram
\begin{equation}
\begin{tikzcd} \label{Eq:ZeroDimensionalKRStratum}
\pi_0(\shgetinf(\tau)) \arrow{d} & \arrow[l, "\sim"] I_x(\mathbb{Q}) \backslash \pi_0(\xmube(\tau)) \times G(\afp) \arrow{d} \\
\pi_0(\shginf)  & \arrow[l, "\sim"] G(\mathbb{Q})_{+} / \pi_1(G_{\qp})_I^{\sigma} \times \frac{G(\afp)}{\rho(\gsc(\afp))},
\end{tikzcd}
\end{equation}
where the map 
\begin{align}
  \xmube(\tau) \times G(\afp) \to \pi_1(G_{\qp})_I^{\sigma} \times G(\afp)
\end{align}
is the product of the natural map $\kappa$ of Section \ref{Sec:UniformisationAndPiNaught} and the identity map on $G(\afp)$. By \cite[Theorem 7.8]{PR}, which is a strong approximation result, the group $I^{\mathrm{sc}}(\mathbb{Q})$ is dense in (using $j_{x,p}$ and $j_{x}^p$ from Section \ref{Sec:BasicUniformisation} to make the identification)
\begin{align}
  \prod_{\ell \in \Sigma} I^{\mathrm{sc}}(\mathbb{Q}_{\ell}) = J_b^{\mathrm{sc}}(\mathbb{Q}_p) \times \prod_{\ell \in \Sigma \setminus \{p\}} \gsc(\mathbb{Q}_{\ell}).
\end{align}
Recall that we sometimes write $\gsc(\afp) \subset G(\afp)$ for $\rho(\gsc(\afp)) \subset G(\afp)$. Using the discussion above, we can identify the right vertical map in \eqref{Eq:ZeroDimensionalKRStratum} with the natural map\footnote{Note that $\pi(G)$ is Hausdorff, which follows from the discussion in the second paragraph of Section \ref{Sec:UniformisationAndPiNaught}. Therefore the kernel of $G(\afp) \times J_b(\qp) \to G(\afp) \times \pi_1(G_{\qp})_I^{\sigma} \to \pi(G)$ is closed and thus contains the closure of $I_x(\mathbb{Q})$.}
\begin{align}
  I_x(\mathbb{Q}) \backslash \frac{\pi_0(\xmube(\tau))}{J_b^{\mathrm{sc}}(\qp)} \times \frac{G(\afp)}{\prod_{\ell \in \Sigma \setminus \{p\}} \gsc(\mathbb{Q}_{\ell})} \to G(\mathbb{Q})_{+} / \left( \pi_1(G_{\qp})_I^{\sigma} \times \frac{G(\afp)}{\gsc(\afp)}\right).
\end{align}
Lemma \ref{Lem:LocalTransitiveAction} tells us that $\kappa$ induces an isomorphism $\tfrac{\pi_0(\xmube(\tau))}{J_b^{\mathrm{sc}}(\qp)} \to \pi_1(G_{\qp})_I^{\sigma}$ and thus we get 
\begin{align} \label{Eq:MorphismOfAbelianGroups}
  I_x(\mathbb{Q}) \backslash \pi_1(G_{\qp})_I^{\sigma} \times \frac{G(\afp)}{\prod_{\ell \in \Sigma \setminus \{p\}} \gsc(\mathbb{Q}_{\ell})} \to G(\mathbb{Q})_{+} / \left( \pi_1(G_{\qp})_I^{\sigma} \times \frac{G(\afp)}{\gsc(\afp)}\right).
\end{align}
The fibers of the natural map
\begin{align} \label{Eq:InfiniteLevel}
  \pi_1(G_{\qp})_I^{\sigma} \times \frac{G(\afp)}{\prod_{\ell \in \Sigma \setminus \{p\}} \gsc(\mathbb{Q}_{\ell})} \to \pi_1(G_{\qp})_I^{\sigma} \times \frac{G(\afp)}{\gsc(\afp)}
\end{align}
clearly have a transitive action of $\gsc(\afs)$. To show that the same is true for the fibers of \eqref{Eq:MorphismOfAbelianGroups}, we need to show that the images of the two natural maps
\begin{align} \label{Eq:InclusionQuotients}
  I_x(\mathbb{Q}), G(\mathbb{Q})_{+} \to \pi_1(G_{\qp})_I^{\sigma} \times \frac{G(\afp)}{\gsc(\afp)}.
\end{align}
are equal. Now note that $I_x(\mathbb{Q})=I_x(\mathbb{Q})_{+}$ because $I_x^{\mathrm{ad}}(\mathbb{R})$ is compact and thus connected, see \cite[Cor. 1 on page 121]{PR}. Then the required identification of the images of \eqref{Eq:InclusionQuotients} is exactly what is proved in Proposition \ref{Prop:Borovoi}.
\end{proof}
\begin{Prop} \label{Prop:GlobalActionTransitiveKR}
Let $\Sigma$ be a finite set of primes with $p \in \Sigma$. If either $\mathbf{Sh}_{U}(G,X)$ is proper or Conjecture \ref{Conj:KR} holds, then $\gsc(\afs)$ acts transitively on the fibers of
\begin{align}
  \pi_0(\shgetinf(\le \! w)) \to \pi_0(\shginf).
\end{align}
\end{Prop}
\begin{proof}
There is a $G(\afp)$-equivariant commutative diagram
\begin{equation}
  \begin{tikzcd} \label{Eq:PiNaughtDiagram}
  \shgetinf(\tau) \arrow{dr} \arrow[rr] & & \pi_0(\shgetinf(\le \! w)) \arrow{dl} \\
  & \pi_0(\shginf).
  \end{tikzcd}
\end{equation}
If Conjecture \ref{Conj:KR} holds, then by Proposition \ref{Prop:KRZero} every connected component of $\shget(\le \! w)$ intersects $\shget(\tau)$. If $\mathbf{Sh}_{U}(G,X)$ is proper, then $\mathscr{S}_{U}(G,X)$ is proper by the main result of \cite{MP}. Therefore $\shg$ is perfectly proper and moreover $\shget$ is perfectly proper since $\shget \to \shg$ is perfectly proper. Now Lemma \ref{Lem:RaynaudArgument} tells us that every connected component of $\shget(\le \! w)$ intersects $\shget(\tau)$. Thus under the assumptions of the proposition the horizontal arrow in \eqref{Eq:PiNaughtDiagram} is surjective. Indeed, it is a continuous morphism of profinite sets that is a countable inverse limit of surjective maps between finite sets.

We see that the fibers of the left diagonal map surject onto the fibers of the right diagonal map. Now $\gsc(\afs)$ acts transitively on the fibers of the left diagonal map by Proposition \ref{Prop:GlobalActionTransitive}, and therefore also on the fibers of the right diagonal map.
\end{proof}

\subsection{Proof of the main theorems}
\begin{Thm} \label{Thm:DiagramCartesian}
If either $\mathbf{Sh}_{U}(G,X)$ is proper or Conjecture \ref{Conj:KR} holds, then the natural map $\iota:\shge \to \shget$ is an isomorphism.
\end{Thm}
\begin{proof}
We know that $\iota$ is a closed immersion by Proposition \ref{Prop:ClosedImmersion}, whose source and target are equidimensional of the same dimension by Proposition \ref{Prop:perfectlysmooth}. To prove that this closed immersion is an isomorphism, it suffices to show that for each $w \in \admu$ of maximal length, the closed immersion
\begin{align}
\shge(\le \! w) \to \shget(\le \! w)
\end{align}
is an isomorphism. Now source and target are normal by Corollary \ref{Cor:Normal}, and so the source is a union of connected components of the target. To show that the inclusion
\begin{align}
  \pi_0(\shge(\le \! w)) \to \pi_0(\shget(\le \! w))
\end{align}
is an isomorphism, we will use the $G(\afp)$-equivariance of the map $\pi_0(\shgeinf(\le \! \! w)) \to \pi_0(\shgetinf(\le  \! w))$. We know by Corollary \ref{Cor:NonEmptiness} that
\begin{align}
\shge(\le \! w) \to \pi_0(\shg)
\end{align}
is surjective for all $U^p$, and therefore it is enough to show that $\gsc(\afp)$ acts transitively on the fibers of $\shgetinf(\le \! w) \to \pi_0(\shginf)$. Under our assumptions, this follows from Proposition \ref{Prop:GlobalActionTransitiveKR}.
\end{proof}
\subsubsection{Proofs of the main theorems} In this section we deduce the main theorems of the introduction.
\begin{proof}[Proof of Theorem \ref{Thm:Uniformisation}]
Recall that we assumed in Theorem \ref{Thm:Uniformisation} that $\pi_1(G)_I$ is torsion free, which implies that all parahoric subgroups of $G(\qp)$ are connected by Lemma \ref{Lem:TorsionFreeParahoricConnected}. Part 1 of Theorem \ref{Thm:Uniformisation} is Theorem \ref{Thm:VerySpecialUniformisationAndLifting}.

To prove part 2 of the theorem, we let $U_p''$ be an arbitrary parahoric subgroup. We choose an Iwahori subgroup $U_p' \subset U_p''$ and a very special parahoric $U_p \supset U_p'$, this is possible as explained in Section \ref{Sec:VerySpecialParahoric}. The result for $\scrs_{U'}(G,X)$ now follows from Theorem \ref{Thm:LiftingUniformisation} in combination with Theorem \ref{Thm:DiagramCartesian}. Here to apply Theorem \ref{Thm:DiagramCartesian} we need to verify that either $\mathbf{Sh}_{U}(G,X)$ is proper or Conjecture \ref{Conj:KR} holds. In the statement of Theorem \ref{Thm:Uniformisation} we are assuming that either $\mathbf{Sh}_{U}(G,X)$ is proper or that $G_{\qp}$ is unramified. Now we recall that Conjecture \ref{Conj:KR} holds if $G_{\qp}$ is unramified by \cite[Proposition 6.20]{WedhornZiegler} and the main result of \cite{Andreatta}, see Remark \ref{Rem:ConjKRUnramified}. The result for $\scrs_{U''}(G,X)$ follows from the result for  $\scrs_{U'}(G,X)$ in combination with \cite[Proposition 7.7]{Z}.  
\end{proof}
\begin{proof}[Proof of Theorem \ref{Thm:CMLifting}]
Recall that we assumed in Theorem \ref{Thm:Uniformisation} that $\pi_1(G)_I$ is torsion free, which implies that all parahoric subgroups of $G(\qp)$ are connected by Lemma \ref{Lem:TorsionFreeParahoricConnected}. Theorem \ref{Thm:CMLifting} is therefore a direct consequence of Corollary \ref{Cor:Uniformisation}.
\end{proof}
\begin{proof}[Proof of Theorem \ref{Thm:HeRapoport}]
By \cite[Theorem 8.1.(ii)]{Z}, uniformisation of isogeny classes, as proved in Theorem \ref{Thm:Uniformisation}, implies that the He--Rapoport axioms hold.
\end{proof}
\begin{proof}[Proof of Theorem \ref{Thm:EKOR}]
This follows from Theorem \ref{Thm:IrredEKOR} below by noting that when $\gad$ is $\mathbb{Q}$-simple as in the assumptions of Theorem \ref{Thm:IrredEKOR}, then $\mathbb{Q}$-non basic just means non-basic. Note that Theorem \ref{Thm:IrredEKOR} has the assumption that either $\mathbf{Sh}_{U}(G,X)$ is proper or Conjecture \ref{Conj:KR} holds, which is true if either $\mathbf{Sh}_{U}(G,X)$ is proper or if $G_{\qp}$ is unramified, see the proof of Theorem \ref{Thm:Uniformisation} above.
\end{proof}
\subsection{Consequences for irreducible components}
In this section we prove a generalisation of Theorem \ref{Thm:EKOR}. Before we can state it, we need to introduce some notation. Let $\gad=\prod_{i} G_i$ be the decomposition of $\gad$ into simple groups over $\mathbb{Q}$ and consider the induced maps of Kottwitz sets
\begin{align}
  B(G_{\qp}) \to B(\gad_{\qp}) \to \prod_i B(G_{i,\qp}).
\end{align}
\begin{Def}[Definition 5.3.2 of \cite{KretShin}]
An element $[b] \in B(G)$ is called $\mathbb{Q}$\emph{-non-basic} if the image of $[b]$ in $B(G_{i, \mathbb{Q}_p})$ is non-basic for all $i$. A Newton stratum $\shgb$ is called $\mathbb{Q}$\emph{-non-basic} if $[b]$ is $\mathbb{Q}$-non-basic.
\end{Def}
Recall that $K \subset \mathbb{S}$ corresponds to a very special parahoric.
\begin{Thm} \label{Thm:IrredEKOR}
Let $w \in \kadmu$ such that the EKOR stratum $\shg\{w\}$ intersects a $\mathbb{Q}$-non-basic Newton stratum. If either $\mathbf{Sh}_{U}(G,X)$ is proper or Conjecture \ref{Conj:KR} holds, then
\begin{align}
  \shg\{w\} \to \shg
\end{align}
induces a bijection on $\pi_0$.
\end{Thm}
We start by proving a lemma.
\begin{Lem} \label{Lem:ForgetfulEKOR}
For $w \in \kadmu$, viewed as an element of $\admu$ via $\kadmu \subset \admu$, the forgetful map $\shge(w) \to \shg$ factors through $\shg\{w\}$, via a surjective map $\shge(w) \to \shg\{w\}$.
\end{Lem}
\begin{proof}
The factorisation is \cite[Theorem 5.4.5.(3)]{ShenYuZhang} and the surjectivity is proved there under the assumption that Axiom 4(c) of \cite{He-Rapoport} holds, which is true by Theorem \ref{Thm:HeRapoport}.
\end{proof}
\begin{proof}[Proof of Theorem \ref{Thm:IrredEKOR}]
We will prove that if $w \in \admu$ such that $\shge(w)$ intersects a $\mathbb{Q}$-non-basic Newton stratum, then the natural map $\pi_0(\shge(w)) \to \pi_0(\shg)$ is a  bijection. By Lemma \ref{Lem:ForgetfulEKOR}, this will imply Theorem \ref{Thm:IrredEKOR}. \medskip 

\textbf{Step 1:} We first deal with the case of $\sigma$-straight $w \in \admu$. Then $\shge(w)$ is contained in a unique Newton stratum $\shgeb$, see \cite[Theorem 1.3.5]{ShenYuZhang}, which by assumption is $\mathbb{Q}$-non-basic. We deduce from Theorem \ref{Thm:DiagramCartesian}, Corollary \ref{Cor:Normal} and Proposition \ref{Prop:GlobalActionTransitiveKR} that for any finite set of primes $\Sigma$ with $p \in \Sigma$, the group $\gsc(\afs)$ acts transitively on the fibers of 
\begin{align}
  \pi_0(\shgeinf(w)) \to \pi_0(\shginf).
\end{align}
By Lemma \ref{Lem:FiniteEtale}, there exists $y \in \kadmu$ such that the natural map $\shget(w) \to \shg$ factors via a finite \'etale map $\shget(w) \to \shg\{y\}$. We want to apply \cite[Theorem 3.4.1]{OrdinaryHO} to the $G(\afp)$-equivariant finite \'etale cover $\shgeinf(w) \to \shginf\{y\}$. Note that \cite[Hypothesis 2.3.1]{OrdinaryHO} follows from Theorem \ref{Thm:CMLifting}, see \cite[Remark 2.3.3]{OrdinaryHO}. Moreover, \cite[Hypothesis 3.4.1]{OrdinaryHO} is satisfied since $\shg$ is normal because $\mathcal{G}_K$ is very special, see \cite[Corollary 4.6.26]{KisinPappas}. The assumption that every connected component of $\shg\{y\}$ intersects a $\mathbb{Q}$-non-basic Newton stratum holds since $\shge\{y\}$ is contained in a single $\mathbb{Q}$-non-basic Newton stratum since $\shge(w)$ is. 

Therefore the assumptions of \cite[Theorem 3.4.1]{OrdinaryHO} are satisfied and we conclude that if $\Sigma$ contains all the primes $\ell$ where $G^{\mathrm{ad}}_{\ql}$ has a compact factor, then $\gsc(\afs)$ acts trivially on the fibers of $\pi_0(\shgeinf(w)) \to \pi_0(\shginf)$. Since it also acts transitively on these fibers by Proposition \ref{Prop:GlobalActionTransitiveKR} and since the map $\pi_0(\shgeinf(w)) \to \pi_0(\shginf)$ is surjective by Corollary \ref{Cor:NonEmptiness}, we deduce that $\pi_0(\shgeinf(w)) \to \pi_0(\shginf)$ is a bijection.
\smallskip

\textbf{Step 2:} For general $w \in \admu$ intersecting a $\mathbb{Q}$-non-basic Newton stratum $\shgeb$, there is a $\sigma$-straight element $w' \le w$ with $\shge(w') \cap \shgeb$ non-empty; this follows from \cite[Theorem 4.1]{HZ} as in the proof of Lemma \ref{Lem:SigmaStraight}. It follows from our assumptions that every connected component $V$ of $\shge(\le \! w)$ intersects $\shge(\tau)$.\footnote{Recall that $\tau$ is the unique element zero element of $\admu$.} Thus the intersection $V \cap \shge(\le \! w')$ is non-empty, and it is therefore a union of connected components of $\shge(\le \! w')$. Hence $V \cap \shge(\le \! w')$ is equidimensional of dimension $\ell(w')$ and must therefore intersect $\shge(w')$. We see that the natural map $\shge(w') \to \shge(\le \! w)$ induces a surjective map on $\pi_0$. Consider the commutative diagram
\begin{equation}
  \begin{tikzcd}
  \pi_0(\shgeinf(w')) \arrow{dr} \arrow[rr] & & \pi_0(\shgeinf(\le \! w) \arrow{dl} \\
  & \pi_0(\shginf).
  \end{tikzcd}
\end{equation}
The right diagonal map is surjective by Corollary \ref{Cor:NonEmptiness}, the horizontal map is surjective by the discussion above and the left diagonal arrow is a bijection by step $1$. It follows that $\pi_0(\shgeinf(\le \! w)) \to \pi_0(\shginf)$ is a bijection and since $\pi_0(\shgeinf(w)) \to \pi_0(\shgeinf(\le \! w))$ is a bijection by the normality of $\pi_0(\shgeinf(\le \! w))$, see Corollary \ref{Cor:Normal}, we are done.
\end{proof}
\paragraph{Acknowledgments}
I am grateful to James Newton for his guidance during this project and for his careful reading of early versions of this work. I would like to thank Ana Caraiani, Andrew Graham, Paul Hamacher, Ashwin Iyengar, Kai-Wen Lan, Yasuhiro Oki, Benoit Stroh, Giorgi Vardosanidze, Luciena Xiao Xiao , Chia-Fu Yu, Zhiyu Zhang, Paul Ziegler, and Rong Zhou for helpful conversations and correspondence. I would particularly like to thank Rong Zhou for writing the appendix, for encouraging me to work on this project and for his generous help at all stages of the preparation of this paper. Furthermore I would like to thank Mikhail Borovoi for allowing me to reproduce his Mathoverflow answer. I am very grateful to Thomas Haines and Tony Scholl for being my PhD examiners and for giving detailed feedback on this work. Last, I would like to thank the anonymous referees for their thorough reading and countless helpful suggestions. 

\appendix 
\section{Connected components of affine Deligne--Lusztig varieties with very special level structure, by Rong Zhou} \label{Sec:VerySpecialADLV}
	
As explained in the introduction, proving uniformisation of isogeny classes in Shimura varieties of Hodge type with parahoric level is closely related to the problem of understanding connected components of affine Deligne--Lusztig varieties with parahoric level. In this Appendix, we study the connected components of affine Deligne--Lusztig varieties with very special level structure and prove for instance that $J_b(\mathbb{Q}_p)$ acts transitively on these connected components. These results will be used in Section \ref{Sec:UniformisationVerySpecial} to prove uniformisation of isogeny classes in Shimura varieties of Hodge type with very special level.
  \subsection{The main result} We follow the notation of Section \ref{Sec:Local}. Thus $G$ is a reductive group over $\bbQ_p$ and $\{\mu\}$ is a geometric conjugacy class of cocharacters of $G_{\overline{\bbQ}_p}$. We assume that $G$ is quasi-split, and we let $\calI$ be the Iwahori group scheme corresponding to a $\sigma$-stable alcove $\fka$ in the building for $G$. We fix $\calG$ a very special standard parahoric group scheme for $G$. Then $\calG$ corresponds to a $\sigma$-stable special point $\fks$ lying in the closure of $\fka$ and we write $K\subset \bbS$ for the subset of simple affine reflections which preserve $\fks$. The projection $\tilde W \rightarrow W_0$ induces an isomorphism $W_K\cong W_0$. 
	
	As explained in \cite[\S9]{Z}, we have an identification $$W_K\backslash\tilde W/W_K\cong X_*(T)^+_I.$$
	
	By \cite{He-Rapoport}, there exists a reduced root system $\Sigma$ (the \'echelonnage root system) such that $$W_a\cong W(\Sigma)\ltimes Q^\vee(\Sigma),$$ where $W(\Sigma)$ (resp. $Q^\vee(\Sigma)$) is the Weyl group (resp. coroot lattice) of $\Sigma$. We define a partial order $\preccurlyeq$ on $X_*(T)^+_I$ by setting $\lambda\preccurlyeq \lambda'$ if $\lambda-\lambda'$ can be written as a sum of positive coroots in $Q^\vee(\Sigma)$ with positive integral coefficients. Then by \cite[pp. 210]{Lu}, the Bruhat order on $W_K\backslash\tilde W/W_K$ agrees with the partial order $\preccurlyeq$. It follows that for $\mu\in X_*(T)_I^+$, we have $$\Adm(\mu)_K=\{\lambda\in X_*(T)^+_I| \lambda\preccurlyeq \mu\}.$$
	
	Let $b\in G(\brQ)$ such that $[b]\in \bgmu$. We have the affine Deligne--Lusztig variety $X(\mu,b)_K$ defined in Section \ref{Sec:ADLVSet}. We also set $$\Adm(\mu)^K:=W_K\Adm(\mu)W_K\subset \tilde W$$ and define $$X(\mu,b)^K:=\bigcup_{w\in \Adm(\mu)^K}X_w(b),$$
	which is a locally closed subscheme of the Witt vector affine flag variety $\mathrm{Gr}_{\calI}$. Then there is a natural map \begin{equation}\label{eqn: fibration ADLV}X(\mu,b)^K\rightarrow X(\mu,b)_K\end{equation} which is equivariant for the action of the $\sigma$-centraliser group $J_b(\qp)$. In fact, \eqref{eqn: fibration ADLV} is a fibration with connected fibers and hence induces a $J_b(\qp)$-equivariant bijection \begin{equation}\label{eqn: fibration ADLV 2}\pi_0(X(\mu,b)^K)\xrightarrow{\sim} \pi_0(X(\mu,b)_K).\end{equation}
	
	\subsubsection{} Recall from e.g \cite[Section 1.1.2]{KMPS} that associated to $[b] \in B(G)$ there is a \emph{Newton cocharacter} $\overline\nu_{[b]}$. Let $M$ denote the centraliser of $\overline\nu_{[b]}$ and we fix a representative $b$ of $[b]$ such that $\nu_{[b]}=\overline\nu_{[b]}$. The existence of such a representative follows from the same argument as in \cite[Lemma 2.5.2]{CKV} which also shows that $b\in M(\brQ)$. Then $b$ is a basic element of $M$, in other words $\overline{\nu}_{[b]}$ is central in $M$.
	
	We use a subscript $M$ to denote the corresponding objects for $M$. Thus $\tilde W_M$ (resp. $W_{a,M}$) denotes the Iwahori--Weyl group (resp. affine Weyl group) for $M$. The intersection $M(\brQ)\cap\calG(\brZ)$ arises as the $\brZ$-points of a very special parahoric group scheme $\calM$ for $M$, which is standard for the alcove $\fka_M$ for $M$ determined by $\fka$. We write $\calI_M$ for the Iwahori group scheme of $M$ determined by $\fka_M$ and we let $K_M\subset \bbS_M$ denote the subset of simple affine reflections for $M$ corresponding to $\calM$.
	We let $\Sigma_M$ denote the \'echelonnage root system for $M$ so that $$W_{a,M}\cong W(\Sigma_M)\ltimes Q^\vee(\Sigma_M).$$ 
	
	For $x\in \pi_1(M)_I$, we write $\tau_x\in \Omega_M$ for the corresponding length 0 element, and we write $\tau_x=t^{\mu_x}w_x$ for a unique $w_x\in W_K$. Then the map $x\mapsto \mu_x$ induces a bijection \begin{equation}\label{eqn: I set}\pi_1(M)_I\cong \{\lambda\in X_*(T)_I|\text{$\lambda$ is $M$-dominant and $M$-minuscule}\},\end{equation} here $M$-minuscule means minuscule with respect to the root system $\Sigma_M$. We define the set $$I_{\mu,b,M}:=\{x\in \pi_1(M)_I| \kappa_M(b)=x, \mu_x\preccurlyeq\mu\}.$$
	
	Via the bijection \eqref{eqn: I set}, we also consider $I_{\mu,b,M}$ as a subset of the set of $M$-minuscule and $M$-dominant elements in $X_*(T)_I$. For each $\lambda\in I_{\mu,b,M}$, we have the affine Deligne--Lusztig variety $X^M(\lambda,b)_{K_M}$ for the group $M$. It is a closed subscheme of the partial affine flag variety for $M$ with respect to the parahoric subgroup $\mathcal{M}$, and its $\ovfp$-points are given by 
	$$\{m\in M(\brQ)/\calM(\brZ)| m^{-1}b\sigma(m)\in \calM(\brZ)\dot t^{\lambda'}\calM(\brZ), \lambda'\preccurlyeq_M\lambda\}.$$
	It is equipped with a natural map \begin{equation}\label{eqn: ADLV Levi}X^M(\lambda,b)_{K_M} \rightarrow X(\mu,b)_K\end{equation}
	which is equivariant for the action of the $\sigma$-centraliser group $J_b(\bbQ_p)$.
	
	\subsubsection{}Our main theorem on the connected components of affine Deligne--Lusztig varieties is the following.
	
	\begin{Thm}\label{thm: conn ADLV}
		$J_b(\bbQ_p)$ acts transitively on $\pi_0(X(\mu,b)_K)$. In particular, for any $\lambda\in I_{\mu,b,M}$ the map \eqref{eqn: ADLV Levi} induces a surjection $$\pi_0(X^M(\lambda,b)_{K_M})\rightarrow \pi_0(X(\mu,b)_K).$$
	\end{Thm}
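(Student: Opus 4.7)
The plan is to adapt the strategies of \cite{CKV} and \cite{HZ} to the very special quasi-split setting, exploiting that $W_K \cong W_0$ makes $\Adm(\mu)_K$ transparent as the set $\{\lambda \in X_*(T)_I^+ : \lambda \preccurlyeq \mu\}$. The second assertion of the theorem will follow at once from the first by equivariance: the morphism \eqref{eqn: ADLV Levi} is $J_b(\bbQ_p)$-equivariant with non-empty image, so once $J_b(\bbQ_p)$ acts transitively on $\pi_0(X(\mu,b)_K)$, this image must be all of $\pi_0(X(\mu,b)_K)$. Thus I would concentrate on the transitivity claim.

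For the main claim I would first use the Kottwitz map $\kappa_G$ to produce a $J_b(\bbQ_p)$-equivariant map $\overline\kappa_G: \pi_0(X(\mu,b)_K) \to c_{\mu,b}$, where $c_{\mu,b}$ is the appropriate coset inside $\pi_1(G)_I^\sigma$, and verify that $\overline\kappa_G$ is surjective by an elementary analysis at the maximal cell $\mu$. Because $G$ is quasi-split, the Newton centralizer $M$ is a genuine $\bbQ_p$-Levi of $G$ and $J_b$ contains an inner form of $M$; combined with the surjectivity of the Kottwitz map for $J_b$ (compare the proof of \cite[Lemma 5.18]{Z}) this shows $J_b(\bbQ_p)$ acts transitively on $c_{\mu,b}$, reducing the problem to transitivity of $\ker\bigl(J_b(\bbQ_p) \to \pi_1(G)_I^\sigma\bigr)$ on the fibers of $\overline\kappa_G$. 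I would then run a Hodge--Newton type reduction, showing that every connected component of $X(\mu,b)_K$ meets the image of $X^M(\lambda,b)_{K_M}$ for some $\lambda \in I_{\mu,b,M}$; via $W_K \cong W_0$ and the bijection \eqref{eqn: I set} this becomes a combinatorial statement about the existence of $M$-dominant $M$-minuscule representatives of the relevant double cosets in $\Adm(\mu)_K$.

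Inside $M$, the element $b$ is basic, so $J_b$ is an inner form of $M$, and $X^M(\lambda,b)_{K_M}$ is controlled by a closed Schubert variety in $\Gr_\calM$ fibered over $\pi_1(M)_I^\sigma$. Transitivity of $J_b(\bbQ_p)$ on $\pi_0\bigl(X^M(\lambda,b)_{K_M}\bigr)$ then follows from an explicit Kottwitz-style analysis inside $M$ analogous to \cite[\S 6]{HZ}, substantially simpler here because $\calM$ is itself very special for $M$. The main obstacle will be the Hodge--Newton covering step: in the general parahoric setting this is where the argument of \cite{HZ} required residual splitness, and at very special level one must carry out the combinatorial analysis carefully, presumably using convexity of the Newton polygon together with the explicit description of $\Adm(\mu)_K$ under $\preccurlyeq$ to exhibit, for each $w \in \Adm(\mu)_K$, a path in the weak Bruhat order from $w$ into a Levi stratum.
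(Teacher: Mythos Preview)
Your proposal has a genuine gap: you never address what the paper isolates as Proposition~\ref{prop: conn ADLV Levi 2}, namely that the image of $\pi_0(X^M(\lambda,b)_{K_M})\to\pi_0(X(\mu,b)_K)$ is \emph{independent of the choice of} $\lambda\in I_{\mu,b,M}$. Your Hodge--Newton covering step (corresponding to Proposition~\ref{prop: conn ADLV Levi 1}) only gives that every connected component of $X(\mu,b)_K$ meets the image of \emph{some} $X^M(\lambda,b)_{K_M}$. Combined with transitivity of $J_b(\bbQ_p)$ on each $\pi_0(X^M(\lambda,b)_{K_M})$, this yields only that $\pi_0(X(\mu,b)_K)$ decomposes into at most $|I_{\mu,b,M}|$ many $J_b(\bbQ_p)$-orbits, not one. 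Your Kottwitz-map reduction does not close this gap: distinct $\lambda\in I_{\mu,b,M}$ all have the same image in $\pi_1(G)_\Gamma$, so their contributions can land in the same fibre of $\overline\kappa_G$, yet they correspond to genuinely different elements of $\pi_1(M)_I$, and nothing in your outline lets $J_b(\bbQ_p)$ pass between them.

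The paper supplies this missing step by constructing explicit $\bbP^{1,p^{-\infty}}$-curves inside $X(\mu,b)_K$, via affine root subgroups, that connect a point in the image of $X^M(\mu_x,b)_{K_M}$ to one in the image of $X^M(\mu_{x'},b)_{K_M}$ whenever $x\overset{(\alpha,r)}{\rightarrow}x'$ in the sense of \cite{Nie1}; a combinatorial lemma (Lemma~\ref{lem: Nie 7.7}) then shows any two elements of $I_{\mu,b,M}$ are joined by a chain of such moves. This is substantial work in the ramified setting, since root subgroups need no longer be $\bbG_a$'s (one must handle the $\mathrm{Res}_{\breve K/\brQ}SU_3$ case separately), and the combinatorics is transferred from the unramified case via an auxiliary group with isomorphic Iwahori--Weyl data. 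Note also that the paper runs the logic in the opposite direction from yours: it first proves surjectivity of a \emph{single} Levi map on $\pi_0$ (combining Propositions~\ref{prop: conn ADLV Levi 1} and~\ref{prop: conn ADLV Levi 2}) and then deduces $J_b(\bbQ_p)$-transitivity from the known transitivity on the Levi side \cite[Theorems 4.1, 5.1]{HZ}. Finally, your description of the covering step itself as ``a combinatorial statement'' about paths in the weak Bruhat order undersells it: the paper's proof of Proposition~\ref{prop: conn ADLV Levi 1} likewise requires explicit curves through root subgroups to pass from a $\sigma$-straight stratum to a Levi stratum, not just a Bruhat-order argument.
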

	\begin{Rem}
		The theorem is stated for $G$ a quasi-split reductive group over $\bbQ_p$. However, the result makes sense for general quasi-split groups over any local field $F$ and can be proved in exactly the same way.
	\end{Rem}
	
	\subsubsection{}We follow the strategy of \cite{CKV} and \cite{Nie1} where this result was proved for unramified groups $G$. The result follows from the following two propositions.
	
	\begin{prop}\label{prop: conn ADLV Levi 1}The natural map $$\coprod_{\lambda\in I_{\mu,b,M}} X^M(\lambda,b)_{K_M}\rightarrow X(\mu,b)_K$$ induces a surjection $$\coprod_{\lambda\in I_{\mu,b,M}} \pi_0(X^M(\lambda,b)_{K_M})\rightarrow \pi_0(X(\mu,b)_K).$$
	\end{prop}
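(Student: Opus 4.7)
The plan is to adapt the strategy of Chen--Kisin--Viehmann \cite{CKV} and Nie \cite{NieConnected} for the unramified case to the setting of quasi-split groups with very special parahoric level. The essential output to be proved is that every connected component of $X(\mu,b)_K$ meets the image of $X^M(\lambda,b)_{K_M}$ for some $\lambda \in I_{\mu,b,M}$.

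The first move is to pass to Iwahori level. Thanks to the fibration \eqref{eqn: fibration ADLV 2}, the map $X(\mu,b)^K \to X(\mu,b)_K$ induces a bijection on $\pi_0$, so it suffices to exhibit, in each component of $X(\mu,b)^K$, a point that lies in the image of some $X^M(\lambda,b)_{K_M}$ via the natural inclusion $M(\brQ) \hookrightarrow G(\brQ)$. Using the KR stratification $X(\mu,b)^K = \coprod_{w \in \Adm(\mu)^K} X_w(b)$ together with the closure relations for KR strata, the problem reduces to showing that every KR stratum can be connected to a stratum $X_{\tau_x}(b)$ for a length-zero element $\tau_x \in \Omega_M$ with $x \in I_{\mu,b,M}$, because such $\tau_x$-strata are precisely the image of the zero-dimensional KR strata of $X^M(\lambda,b)^{\emptyset}$ for $\lambda = \mu_x$.

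The key geometric input is that, because $\calG$ corresponds to a very special vertex, the quotient $W_K \backslash \tilde W / W_K$ is identified with $X_*(T)_I^+$ equipped with the dominance order $\preccurlyeq$ of the \'echelonnage root system. This clean combinatorial description allows one to carry out a Hodge--Newton-style degeneration at Iwahori level: for any $w \in \Adm(\mu)^K$, one finds a sequence of one-dimensional KR strata in $X(\mu,b)^K$ connecting $X_w(b)$ to some $X_{w'}(b)$ with $w'$ of strictly smaller length, and induction on the length eventually lands one at a length-zero element. The condition $\kappa_G(w) = \kappa_G(b)$ forces the terminal length-zero element to lie in $\Omega_M$, since the Newton cocharacter of $b$ factors through $M$; the minuscule/dominance condition cutting out $I_{\mu,b,M}$ is then automatic from $w \preccurlyeq \mu$ translated under the identification \eqref{eqn: I set}.

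The hardest step is the production of these connecting one-dimensional KR strata inside $X(\mu,b)^K$ itself (as opposed to a larger ambient space). In the unramified case this is accomplished via explicit semi-module computations and a judicious use of Deligne--Lusztig curves inside the residual reductive quotient; for a very special parahoric of a quasi-split group the same calculation should go through after replacing the split reductive Weyl datum by its $\sigma$-invariants, since the special fiber of $\calG$ is still a connected reductive group over $k$ whose root datum is encoded by the \'echelonnage data. Once the surjectivity on $\pi_0$ is established, Theorem \ref{thm: conn ADLV} follows from Proposition \ref{prop: conn ADLV Levi 1} together with the basic case: since $b$ is basic in $M$, the action of $J_b(\bbQ_p)$ on $\pi_0(X^M(\lambda,b)_{K_M})$ is transitive by the standard analysis of ADLVs for basic elements in the presence of a very special parahoric, which is exactly the kind of input handled by \cite{HZ} in the residually split case.
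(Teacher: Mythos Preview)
Your proposal has a genuine gap in the reduction step. You claim that Deligne--Lusztig type reduction on length will terminate at a length-zero element $\tau_x \in \Omega_M$, and you justify this by saying ``$\kappa_G(w) = \kappa_G(b)$ forces the terminal length-zero element to lie in $\Omega_M$, since the Newton cocharacter of $b$ factors through $M$''. This is incorrect on two counts. First, DL reduction in $\tilde W$ (for $G$) does not terminate at length-zero elements; it terminates at $\sigma$-straight elements, and for non-basic $[b]$ the $\sigma$-straight $w$ with $[\dot w]=[b]$ have positive length (equal to $\langle 2\rho,\overline\nu_b\rangle$). Second, even granting a $\sigma$-straight terminal element $w$, nothing you have written explains why $w$ lies in $\tilde W_M$, let alone in $\Omega_M$: the unique element of $\Omega_G$ in the relevant $W_a$-coset is typically not in $\tilde W_M$ at all (e.g.\ take $G=\GL_2$, $M=T$), and a $\sigma$-straight $w$ for $[b]$ need not lie in $\tilde W_M$ either.

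The paper supplies exactly the missing structural step. After invoking \cite[Theorem 4.1]{HZ} to land in $X_w(b)$ for some $\sigma$-straight $w$, it proves a lemma (Lemma~\ref{lem: short element}) showing that any such $w$ is $W_0$-$\sigma$-conjugate, via some $u\in{}^JW_0$, to an element $w_\sharp\in\Omega_M$; this uses that $w$ is $(\nu_w,\sigma)$-fundamental. The actual connection to $X^M(\lambda,b)_{K_M}$ is then made by an explicit inductive construction of $\bbP^{1,p^{-\infty}}$-curves in $\mathrm{Gr}_{\calI}$ built from affine root subgroups $\calU_{-\alpha_i}$ (not from Deligne--Lusztig varieties in the reductive quotient as you suggest), moving $g_0\calI$ to $g_0\dot u\calI$ along the reduced word for $u$ while staying inside $X(\mu,b)^K$. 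Your sketch of ``Hodge--Newton-style degeneration'' and ``the same calculation should go through'' does not supply either of these ingredients.
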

	
	\begin{prop}\label{prop: conn ADLV Levi 2} Let $\lambda\in I_{\mu,b,M}$. The image of the natural map $$\pi_0(X^M(\lambda,b)_{K_M})\rightarrow \pi_0(X(\mu,b)_K)$$ does not depend on the choice of $\lambda\in I_{\mu,b,M}$.
		
	\end{prop}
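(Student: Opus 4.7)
The plan is to adapt the strategy of \cite{CKV} from the unramified case, exploiting the combinatorial description of $\Adm(\mu)_K$ available at very special level. The starting observation is that if $\lambda,\lambda'\in I_{\mu,b,M}$, then the images of the corresponding length-zero elements $\tau_x,\tau_{x'}\in\Omega_M$ in $\pi_1(M)_\Gamma$ coincide (both equal $\kappa_M(b)$), so the difference $\lambda-\lambda'$ lies in the coroot lattice $Q^\vee(\Sigma_M)$. This gives $I_{\mu,b,M}$ the structure of a bounded subset of a single $Q^\vee(\Sigma_M)$-coset, and a standard combinatorial argument with the partial order $\preccurlyeq_M$ shows that any two elements of $I_{\mu,b,M}$ are connected by a chain in which consecutive elements differ by a single simple coroot $\alpha^\vee$ of $\Sigma_M$. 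It therefore suffices to prove the proposition for adjacent pairs.

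For such an adjacent pair $\lambda,\lambda'$, I would construct an explicit one-parameter family of elements in $G(\brQ)$, coming from the root subgroup $U_\alpha\subset M\subset G$ associated to the root $\alpha$ whose coroot is $\alpha^\vee$. Specifically, using a representative $b=\dot\tau_x$ and a suitable translate, one builds a morphism from an affine line (in the Witt vector sense) into $\Gr_\calI$ whose special fibers represent points of $X^M(\lambda,b)_{K_M}$ and $X^M(\lambda',b)_{K_M}$ respectively. Pushing this family forward along $\Gr_\calI\to\Gr_\calG$ via the projection $X(\mu,b)^K\to X(\mu,b)_K$ of \eqref{eqn: fibration ADLV}, one obtains a connected subvariety of $X(\mu,b)_K$ meeting the images of both $M$-Deligne--Lusztig varieties. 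Combined with \eqref{eqn: fibration ADLV 2}, this forces the images of $\pi_0(X^M(\lambda,b)_{K_M})$ and $\pi_0(X^M(\lambda',b)_{K_M})$ in $\pi_0(X(\mu,b)_K)$ to coincide.

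The main obstacle will be verifying that the constructed one-parameter family actually lands inside $X(\mu,b)_K$, i.e.\ that all its specializations have relative position bounded by $\Adm(\mu)_K$ with respect to $\calG$. Here the very special hypothesis is essential: the identification $\Adm(\mu)_K=\{\nu\in X_*(T)_I^+\mid\nu\preccurlyeq\mu\}$ reduces this to the combinatorial statement that translates of $\mu_x$ by elements of $W_K\cong W_0$ followed by the root operation stay $\preccurlyeq\mu$. This is a concrete inequality in the \'echelonnage root system $\Sigma$ that can be checked using the compatibility between $\Sigma$ and $\Sigma_M$ (the latter being a sub-root-system of the former, since $M$ is a Levi). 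Once this closure bound is in hand, the argument proceeds exactly as in the unramified setting, and the proposition follows.
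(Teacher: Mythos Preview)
Your proposal contains a genuine error at the very first step. You claim that for $\lambda,\lambda'\in I_{\mu,b,M}$ the difference $\lambda-\lambda'$ lies in the coroot lattice $Q^\vee(\Sigma_M)$ of the Levi. This is false: by definition $I_{\mu,b,M}\subset\pi_1(M)_I=X_*(T)_I/Q^\vee(\Sigma_M)$, and distinct elements $x\neq x'$ of $I_{\mu,b,M}$ are distinct in $\pi_1(M)_I$. Since $\mu_x\mapsto x$ under $X_*(T)_I\to\pi_1(M)_I$, the difference $\mu_x-\mu_{x'}$ maps to $x-x'\neq 0$, hence is \emph{never} in $Q^\vee(\Sigma_M)$. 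What the condition $\kappa_M(b)=x$ actually gives is only that $x-x'$ lies in the image of $(1-\sigma)$ on $\pi_1(M)_I$, which is a very different thing.

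This error propagates: because the $X^M(\lambda,b)_{K_M}$ for different $\lambda$ land in distinct connected components of $\Gr_{\calM}$ (indexed precisely by $\pi_1(M)_I$), no curve built from root subgroups $U_\alpha\subset M$ can possibly connect them. One must leave $M$. The paper does this by chaining through moves of the form $x\mapsto x+\alpha^\vee-\sigma^r(\alpha^\vee)$ with $\alpha\in\Sigma\setminus\Sigma_M$ and $\alpha^\vee$ $M$-dominant $M$-minuscule (a combinatorial lemma reduced to Nie's unramified result via the \'echelonnage identification). The connecting curves are then Frobenius-twisted products
\[
g_x\,u_\alpha(z)\,{}^{b_x\sigma}u_\alpha(z)\cdots{}^{(b_x\sigma)^{r-1}}u_\alpha(z)\,L^+\calG
\]
for these outer roots, with the limit at $\infty$ landing in the target $M$-ADLV. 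The verification that these curves stay in $X(\mu,b)_K$ does use the very special description of $\Adm(\mu)_K$ as you anticipated, but it also requires a careful case analysis of the root subgroups in the ramified setting (including the $SU_3$ cases) and a rank-two computation (Lemma~\ref{lem: Nie 7.14}) that has no analogue in your outline.
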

	
	\begin{proof}[Proof of Theorem \ref{thm: conn ADLV}] Fix $\lambda \in I_{\mu,b,M}$. By Proposition \ref{prop: conn ADLV Levi 1} and Proposition \ref{prop: conn ADLV Levi 2}, the map $$\pi_0(X^M(\lambda,b)_{K_M})\rightarrow \pi_0(X(\mu,b)_K)$$ is surjective. By \cite[Theorem 4.1 and Theorem 5.1]{HZ}, $J_b(\bbQ_p)$ acts transitively on $\pi_0(X^M(\lambda,b)_{K_M})$, and hence on $\pi_0(X(\mu,b)_K)$.
		
	 \end{proof}
	
	\subsubsection{}We now proceed to prove the two propositions. Note that by a standard reduction (see \cite[Section 6]{HZ}), it suffices to prove the propositions when $G$ is adjoint and $\bbQ_p$-simple. We may and do assume this from now on.

	\subsection{Proof of Proposition \ref{prop: conn ADLV Levi 1}}
	\subsubsection{}In the case of unramified groups, Proposition \ref{prop: conn ADLV Levi 1} is \cite[Proposition 3.4.1]{CKV}. Here we prove the general case using a different method based on the Deligne--Lusztig reduction method for affine Deligne--Lusztig varieties in the affine flag variety.
	
	We begin with some preliminaries regarding $\sigma$-conjugacy classes in Iwahori--Weyl groups. For any element $w\in \tilde W$, we let $n$ be a sufficiently divisible integer such that $\sigma^n$ acts trivially on $\tilde W $ and $w\sigma(w)\dotsc\sigma^{n-1}(w)=t^\lambda$ for some $\lambda\in X_*(T)_I$. We set $\nu_w:=\frac{t^\lambda}{n}\in X_*(T)_{I,\bbQ}$ and $\overline{\nu}_w\in X_*(T)_{I,\bbQ}^+$ for the dominant representative of $\nu_w$. We let $\kappa(w)\in \pi_1(G)_\Gamma$ denote the image of $w$ under the projection $\tilde{W}\rightarrow \pi_1(G)_I\rightarrow\pi_1(G)_\Gamma$. We write $B(\tilde W,\sigma)$ for the set of $\sigma$-conjugacy classes in $\tilde W$. Then $w\mapsto [\dot{w}]$ induces a well-defined map $\Psi:B(\tilde W,\sigma)\rightarrow B(G)$ and we have a commutative diagram (see \cite[Section 3.3, Theorem 3.5]{He1}):
  \[\xymatrix{B(\tilde W,\s)\ar[rr]^\Psi \ar[rd]_{(\overline\nu,\kappa)}&& B(G)\ar[ld]^{(\overline\nu,\kappa)}\\
	& (X_*(T)_{I,\mathbb{Q}}^+)^\sigma\times \pi_1(G)_{\Gamma}&.}\]
	
	\subsubsection{}We will need the following lemma. We write $J\subset K\cong\bbS_0$ for the subset corresponding to $M$. Recall that an element $w\in \tilde W$ is said to be $\sigma$-straight if $$n\ell(w)=\ell(w\sigma(w)\dotsc\sigma^{n-1}(w))$$ for all $n$. 
	\begin{Lem}\label{lem: short element} Let $w\in \tilde W$ be $\sigma$-straight such that $\dot w\in[b]$; in particular $\overline\nu_b=\overline \nu_w$. Let $u\in {^JW_0}$ such that $u(\nu_w)=\overline{\nu}_b$ and set $w_\sharp:= uw\sigma(u)^{-1}$. Then $w_\sharp\in \Omega_M$.
	\end{Lem}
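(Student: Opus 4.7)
The plan is to show (i) that $w_\sharp$ is still $\sigma$-straight with Newton point equal to $\overline{\nu}_b$, and then (ii) that any $\sigma$-straight element whose Newton point is dominant and central in the standard Levi $M$ must lie in $\Omega_M$, the set of length-zero elements of $\tilde W_M$.

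For (i), $\sigma$-straightness is preserved under $\sigma$-conjugation, so it suffices to observe that $w_\sharp = u w \sigma(u)^{-1}$ is $\sigma$-conjugate to $w$. Indeed, for $n$ sufficiently divisible so that $\sigma^n$ acts trivially on $\tilde W$, one computes
\begin{equation*}
w_\sharp \sigma(w_\sharp) \cdots \sigma^{n-1}(w_\sharp) \;=\; u \bigl( w \sigma(w) \cdots \sigma^{n-1}(w) \bigr) \sigma^n(u)^{-1} \;=\; u\, t^{n\nu_w}\, u^{-1} \;=\; t^{n\, u(\nu_w)},
\end{equation*}
so $\nu_{w_\sharp} = u(\nu_w) = \overline\nu_b$ by the defining property of $u$. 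Note that $\overline\nu_b$ is central in $M$ by definition of $M$ as the centralizer of $\overline\nu_{[b]}$.

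For (ii), I will invoke the characterization of $\sigma$-straight elements due to He: a $\sigma$-straight element $x$ whose Newton point $\nu_x$ is dominant is conjugate by $W_a$ to a length-zero element lying in the Iwahori--Weyl group of the Levi which centralizes $\nu_x$; more precisely, if $\nu_x$ is already $G$-dominant then $x$ itself is a length-zero element of $\tilde W_{M_{\nu_x}}$. This follows from the formula $\ell(x) = \langle \nu_x, 2\rho - 2\rho_{M_{\nu_x}} \rangle$ for $\sigma$-straight elements combined with the fact that this quantity vanishes when $\nu_x$ is central in $M_{\nu_x}$. Applying this to $w_\sharp$, whose Newton point $\overline\nu_b$ is central in $M$, yields $w_\sharp \in \Omega_M$.

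The main obstacle is making sure the cited characterization of $\sigma$-straight elements applies verbatim in our (possibly ramified) setting, since much of the literature is written for unramified groups; however, these statements only use the Iwahori--Weyl group and the \'echelonnage root system, so they extend without change to the quasi-split case considered here. A secondary subtlety is that $u \in {}^J W_0$ is a Weyl group element rather than an element of $\tilde W$, so one has to be careful that the conjugation computation makes sense; this is why $\sigma^n$ must be trivial on $\tilde W$ for $n$ sufficiently divisible, and on $u$ in particular.
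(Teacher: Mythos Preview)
Your overall outline is reasonable, but there are two genuine gaps that break the argument as written.

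First, $\sigma$-straightness is \emph{not} preserved under arbitrary $\sigma$-conjugation. The $\sigma$-straight elements are exactly the minimal-length elements in their $\sigma$-conjugacy class, so $\sigma$-conjugation can strictly increase length. For a concrete counterexample take $G=\mathrm{PGL}_2$ with trivial $\sigma$: the length-zero element $\tau$ is straight, yet $s_0\tau s_0^{-1}=s_0s_1\tau$ has length $2$ while $(s_0s_1\tau)^2=\tau^2=1$ has length $0$, so $s_0s_1\tau$ is not straight. Thus you cannot assert that $w_\sharp$ is $\sigma$-straight merely because $w$ is; that fact is actually a \emph{consequence} of the lemma, not an input to it.

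Second, your length-formula justification in (ii) confuses the $G$-length $\ell$ with the $M$-length $\ell_M$. The straightness formula is $\ell(x)=\langle\overline\nu_x,2\rho\rangle$; since $\nu_x$ is automatically central in $M_{\nu_x}$ one has $\langle\nu_x,2\rho_{M_{\nu_x}}\rangle=0$, so your expression $\langle\nu_x,2\rho-2\rho_{M_{\nu_x}}\rangle$ just equals $\langle\nu_x,2\rho\rangle$, which is generically nonzero. Membership in $\Omega_M$ means $\ell_M(w_\sharp)=0$, not $\ell(w_\sharp)=0$, and knowing $w_\sharp$ is $\sigma$-straight in $\tilde W$ does not directly control $\ell_M$.

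The paper sidesteps both problems by applying the key input to $w$ itself rather than to $w_\sharp$. By \cite[Theorem~5.2]{HZ}, a $\sigma$-straight element $w$ is $(\nu_w,\sigma)$-fundamental, i.e.\ $w\,\sigma(\calI_{M_{\nu_w}}(\brZ))\,w^{-1}=\calI_{M_{\nu_w}}(\brZ)$. Because $u\in{}^JW_0$, conjugation by $u$ identifies $\calI_{M_{\nu_w}}(\brZ)$ with $\calI_M(\brZ)$, and the normalization property transports to $w_\sharp\,\calI_M(\brZ)\,w_\sharp^{-1}=\calI_M(\brZ)$. Together with $w_\sharp\in\tilde W_M$ (which follows, as in your computation, from $w_\sharp$ fixing $\overline\nu_b$), this is precisely the characterization of $\Omega_M$. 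In other words, the ``characterization due to He'' you wanted to invoke is exactly this fundamental-alcove statement; the correct move is to apply it to $w$ and then conjugate, not to first conjugate and hope the hypothesis survives.
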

	\begin{proof}It suffices to show that $w_\sharp\in W_M$ and $$w_\sharp\calI_M(\brZ)w_\sharp^{-1}=\calI_M(\brZ).$$
		The first statement follows since $w_\sharp(\overline\nu_w)=\overline\nu_w$.
	By \cite[Theorem 5.2]{HZ}, the element $w$ is $(\nu_w,\sigma)$-fundamental. Thus $$w\sigma(\calI_{M_{\nu_w}}(\brZ))w^{-1}=\calI_{M_{\nu_w}}(\brZ)$$ where $M_{\nu_w}$ is the centraliser of $\nu_w$ and $\calI_{M_{\nu_w}}(\brZ):=\calI(\brZ)\cap M_{\nu_w}(\brQ)$. Since $u\in {^JW_0}$, we have $u\calI_{M_{\nu_w}}(\brZ)u^{-1}=\calI_M(\brZ)$. It follows that \begin{align*}w_\sharp\calI_M(\brZ)w_\sharp^{-1} &= w_\sharp\sigma(\calI_M(\brZ))w_\sharp^{-1}\\
		&=uw \sigma(\calI_{M_{\nu_w}}(\brZ))w^{-1}u^{-1}\\
		&=\calI_M(\brZ).
		\end{align*}
		as desired.
	 \end{proof}

	\begin{proof}[Proof of Proposition \ref{prop: conn ADLV Levi 1}] By \eqref{eqn: fibration ADLV 2}, it suffices to show the natural map $$\coprod_{\lambda\in I_{\mu,b,M}} X^M(\lambda,b)^{K_M}\rightarrow X(\mu,b)^K$$  induces a surjection $$\coprod_{\lambda\in I_{\mu,b,M}} \pi_0(X^M(\lambda,b)^{K_M})\rightarrow \pi_0(X(\mu,b)^K).$$ 
		
		Let $Y$ be a connected component of $X(\mu,b)^K$. Then by \cite[Theorem 4.1]{HZ}, there exists a $\sigma$-straight element $w\in\Adm(\mu)^K$, such that $Y\cap X_w(b)\neq \emptyset$. Let $w_\sharp$ denote the element constructed in Lemma \ref{lem: short element} and $u\in {^JW_0}$ such that $uw\sigma(u)=w_\sharp$. Then we claim that $[b]_M= [\dot w_\sharp]_M\in B(M)$. Indeed, we have $\nu_{ w_\sharp}=\overline\nu_w=\overline \nu_b$. Therefore the image of $[b]_M$ and $[\dot w_\sharp]_M$ in $\pi_1(M)_I$ coincide up to torsion. On the other hand, the images of $[b]_M$ and $[\dot w_\sharp]_M$ in $\pi_1(G)_I$ coincide and $\ker(\pi_1(M)_I\rightarrow \pi_1(G)_I)$ is torsion free. It follows that $\kappa_M([b]_M)=\kappa_M([\dot w_\sharp]_M)$ and hence $[b]_M=[\dot w_\sharp]_M$. Thus we may replace $b$ by $\dot w_\sharp$.
		
		We will show that $Y\cap X_{w_\sharp}^M(\dot w_\sharp)\neq \emptyset$. Since $w_\sharp\in\Adm^M(\lambda)^{K_M}$, where $\lambda\in I_{\mu,b,M}$ corresponds to the image of $w_\sharp$ in $\pi_1(M)_I$, it follows that $X_{w_\sharp}(\dot w_\sharp)\subset X^M(\lambda,\dot w_\sharp)^{K_M}$; this implies the proposition.
		
		For any affine root $\alpha$, we let $\calU_{-\alpha_i}$ denote the affine root subgroup corresponding to $-\alpha_i$ over $\brZ$. By \cite[Section 4.3.2, 4.3.5 and 4.3.7]{BT2}, $\calU_{-\alpha_i}$ is the group scheme associated to a finite free $\brZ$-module. For any $\epsilon>0$, we let $\calU_{-\alpha_i+\epsilon}(\brZ)$ be the subgroup of $\calU_{-\alpha_i}(\brZ)$ corresponding to the affine function $-\alpha+\epsilon$. Similarly, we write $\calU_{-\alpha_i+}(\brZ)$ for the union of $\calU_{-\alpha_i+\epsilon}(\brZ)$ over all $\epsilon>0$. As the notation suggests, these arise as the $\brZ$-points of group schemes $\calU_{-\alpha_i+\epsilon}$ and $\calU_{-\alpha_i+\epsilon}$ over $\brZ$, and the quotient $\calU_{-\alpha_i}(\brZ)/\calU_{-\alpha_i+}(\brZ)$ is a 1-dimensional vector space over $ k$. We choose a group scheme homomorphism $$f:\bbG_a\rightarrow \calU_{-\alpha}$$ which lifts the map $k\cong \calU_{-\alpha_i}(\brZ)/\calU_{-\alpha_i+\epsilon}(\brZ)$. 
		
		For $R$ a perfect $k$-algebra and $a\in R$, the map $$h_\alpha:R\rightarrow \calU_{-\alpha}(W(R))$$ $$a\mapsto f([a])$$ where $[a]\in W(R)$ is the Teichm\"uller lift of $a$, induces a $ k$-scheme morphism $$h_\alpha:\bbA^{1,\mathrm{perf}}\rightarrow L^+\calU_{-\alpha},$$
		where $\bbA^{1,\mathrm{perf}}$ denotes the perfection of $\bbA^1$ over $k$. The induced morphism $\bbA^{1,\mathrm{perf}}\rightarrow \mathrm{Gr}_{\calI}$ extends to a morphism $\bbP^{1,\mathrm{perf}}\rightarrow \mathrm{Gr}_{\calI}$ also denoted $h_\alpha$. Then we have \begin{equation}\label{eqn: pt at infty}h_\alpha(\infty)=\dot{s}_\alpha\calI\in \mathrm{Gr}_{\calI}.\end{equation}
		
		Let $g_0\calI\in Y\cap X_w(\dot w_\sharp)$ with $g_0\in G(\brQ)$. By \cite[Theorem. 5.2]{HZ}, we may choose $g_0$ such that $g_0^{-1}\dot w_\sharp g_0= \dot{w}$. Let $s_n\dotsc s_1$ be a reduced word decomposition for $u$ (note that $s_i\in K$) and we write $u_i$ for the element $s_1\dotsc s_i\in \tilde W$ and $u_0=e$. We write $g_i\in G(\brQ)$for the element $g_0\dot u_i$. We will prove by induction that $g_i\calI\in Y$ for $i=0,1,\dotsc,n$; clearly this is true for $i=0$. 
		
		Assume $g_i\in Y$ and we let $\alpha_{i+1}$ denote the positive affine root corresponding to $s_{i+1}$. We consider the map $$g:=g_ih_{\alpha_{i+1}}:\bbP^{1,\mathrm{perf}}\rightarrow \mathrm{Gr}_{\calI}.$$
		
		Since $\mathcal{U}_{-\alpha_{i+1}}\subset \mathcal{I}\dot{s}_{i+1}\mathcal{I}$, for any $s\in \mathbb{A}^{1,\mathrm{perf}}(k)$ we have  \begin{align*}g(s)^{-1}\dot{w}_{\sharp} \sigma(g(s))&=h_{\alpha_{i+1}}(s)^{-1}g_i^{-1}\dot{w}_{\sharp} \sigma(g_i)\sigma(h_{\alpha_{i+1}}(s))
  \\ &\in \calI\dot{s}_{\alpha_{i+1}}\calI \dot{w}_i\calI \sigma(\dot{s}_{\alpha_{i+1}})\calI\\ &\subset \bigcup_{x\in A} \calI\dot{x}\calI\end{align*} where $w_i:=u_i^{-1}w_\sharp\sigma(u_i)\in \Adm(\mu)^K$ and $A\subset \tilde{W}$ is the subset $$A=\{w_i,s_{i+1}w_i,w_i\sigma(s_{i+1}),s_{i+1}w_i\sigma(s_{i+1})\}.$$ Since $\Adm(\mu)^K$ is closed under left and right multiplication by $W_K$, we have $A\subset \Adm(\mu)^K$, and hence $$g(s)^{-1}\dot{w}^{\sharp} \sigma(g(s))\in \bigcup_{v\in\Adm(\mu)^K} \calI\dot{v}\calI$$
		for any $s\in \bbP^{1,\mathrm{perf}}(k)$. Moreover we have $g(0)=g_i\calI$ and $g(\infty)=g_{i+1}\calI$, where the latter equality follows from \eqref{eqn: pt at infty}. Thus the image of $g$ is a curve in $X(\mu,b)^K$ which connects $g_i\calI$ and $g_{i+1}\calI$ and hence $g_{i+1}\calI\in Y$. Then by definition $g_n$, $g_n\calI$ lies in the image of $X^M_{w_\sharp}({\dot{w}_\sharp})$ as desired.
	 \end{proof}

	\subsection{Proof of Proposition \ref{prop: conn ADLV Levi 2}}
	\subsubsection{}When $G$ is unramified, this proposition follows from the proof of \cite[Proposition 4.1.12]{CKV} when $\mu$ is minuscule; the general case is proved in \cite[Proposition 5.1]{Nie1}. The main input is the construction of explicit curves in $X(\mu,b)_K$ which connect points in $X^M(\lambda,b)_{K_M}$ and $X^M(\lambda',b)_{K_M}$ for $\lambda\neq\lambda'\in I_{\mu,b,M}$. The construction of these curves relied on certain combinatorial results concerning the root system for $G$. The exact same method of proof works in our setting; however, there are a few subtleties which we now explain. 
	
	First, the explicit curves were constructed in \cite{CKV} and \cite{Nie1} using root subgroups of $G_{\brQ}$ which are all isomorphic to $\bbG_a$ when the group is unramified. In general, the root subgroups are more complicated and thus one needs to be more careful. However, we are still able to give a uniform construction of the curves that we need.
	
	Secondly, we need to generalise the combinatorial results to general quasi-split groups $G$. It turns out there is a systematic way to deduce these combinatorial results for quasi-split $G$ from the case of unramified groups which we now explain. 
	
	\subsubsection{} \label{Sec:UnramifiedReduction} Recall we have assumed $G$ is adjoint and simple. As in \cite[\S7.2]{He1}, see \cite[Proof of Theorem A.3.1]{ZhouZhu} for an explicit construction, there is an unramified adjoint group $H$ over $\qp$ such that the pair $(W',\sigma')$ consisting of the Iwahori--Weyl group for $G'$ and the action of Frobenius is identified with the pair $(W,\sigma)$. Moreover the \'echelonnage root system $\Sigma$ is identified with the absolute root system $\Sigma'$ for $G'$, and we have an isomorphism 
	$$\pi_1(G)_I\cong\pi_1(G')$$
	$$X_*(T)_I\cong X_*(T'),$$ where $T'$ is a suitable maximal $\brQ$-split torus of $G'$. 
	
	We use a superscript $'$ to denote the corresponding objects for $G'$. Then $\calG$ determines a hyperspecial subgroup $\calG'$ for $G'$ and we write $K'\subset \bbS' $ for the corresponding subset of simple reflections. Then $M$ determines a Levi subgroup $M'$ of $G'$ and hence a subset $J'\subset K'$. It follows that the combinatorial data \begin{equation}\label{eqn: comb data}(\Sigma,X_*(T)_I,\sigma, J,\mu,\kappa_M(b))\end{equation} is identified with the corresponding data for $G'$. Thus any result which only depends on the data (\eqref{eqn: comb data}), can be reduced to the case of unramified groups. The combinatorial results that we need are already proved in the case of unramified groups in \cite{Nie1} and \cite{CKV}. We therefore take the convention that whenever we need certain results which depend on the data (\eqref{eqn: comb data}), we will refer to the relevant result in \cite{CKV} or \cite{Nie1}.

	\subsubsection{}We now proceed with the proof of Proposition \ref{prop: conn ADLV Levi 2}. Let $x,x'\in \pi_1(M)_I$. We write $x \overset{(\alpha,r)}\rightarrow x'$ for some $\alpha\in \Sigma$ and $r\in\bbN$ if $x-x'=\alpha^\vee-\sigma^r(\alpha^\vee)$ and $$\mu_x, \mu_{x+\alpha^\vee}, \mu_{x-\sigma^r(\alpha^\vee)},\mu_{x'}\preccurlyeq \mu.$$ We write $x\overset{(\alpha,r)}\rightarrowtail x'$ if $x \overset{(\alpha,r)}\rightarrow x'$ and neither
	$$x \overset{(\alpha,i)}\rightarrow x+\alpha^\vee-\sigma^i(\alpha^\vee) \overset{(\sigma^{i}(\alpha),r-i)}\rightarrow x'$$ nor
	$$x \overset{(\sigma^i(\alpha),r-i)}\rightarrow x+\sigma^i(\alpha^\vee)-\sigma^r(\alpha^\vee) \overset{(\alpha,i)}\rightarrow x'$$ for any $i\in[1,r-1]$.

	We let $$\langle\ ,\ \rangle: \left(Q(\Sigma)\otimes_{\bbZ}\bbR\right)\times \left(X_*(T)_I\otimes_{\bbZ}\bbR\right) \rightarrow \bbR$$ be the natural pairing, where $Q(\Sigma)$ is the root lattice of $\Sigma$. For any element $\alpha\in \Sigma$, we write $\calO_\alpha$ for the $\sigma$-orbit of $\alpha$. We let $h$ denote the number of connected components of the Dynkin diagram of $G$ over $\brQ$; then we have $\#\calO_\alpha\in\{h,2h,3h\}$.

	\begin{Lem}[{\cite[Lemma 7.7]{Nie1}}]\label{lem: Nie 7.7}Let $x\neq x'\in I_{\mu,b,M}$. Then there exists $x_j\in \pi_1(M)_I$, $\alpha_j\in \Sigma-\Sigma_M$ and $r_j\in\bbN$ for $j\in[0,m-1]$ such that 
		\begin{enumerate}
			\item $\alpha_j^\vee$ is $M$-dominant and $M$-minuscule.
			
			\item $r_j\in [1,h]$ if $\#\calO_{\alpha_j}\in\{h,2h\}$ and $r_j\in [1,2h-1]$ if $\#\calO_{\alpha_j}=3h$.
			
			\item $x_0=x, x_m=x'$ and we have $x_j \overset{(\alpha_j,r_j)}\rightarrow x_{j+1}$ for $j\in[0,m-1]$.
			
		\end{enumerate}
	\end{Lem}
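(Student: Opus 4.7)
The plan is to prove this by reducing the statement to the already-known case of unramified groups, namely \cite[Lemma 7.7]{Nie1}. The key observation is that the statement is \emph{purely combinatorial}: its hypotheses and conclusion involve only the tuple
\begin{align}
(\Sigma, X_\ast(T)_I, \sigma, J, \mu, \kappa_M(b)).
\end{align}
Indeed, the set $I_{\mu, b, M}$ is defined directly in terms of $\mu$ and $\kappa_M(b)$ via the bijection \eqref{eqn: I set}; the relation $x \overset{(\alpha, r)}\rightarrow x'$ is formulated entirely in terms of $\Sigma$, the $\sigma$-action on $X_\ast(T)_I$, and the partial order $\preccurlyeq$; and the integer $h$ is the number of connected components of the Dynkin diagram of $\Sigma$.

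First I would invoke the identification recalled just before the statement (following \cite[\S7.2]{He1}): there exists an unramified adjoint group $G'$ over $\bbQ_p$ whose Iwahori Weyl group and Frobenius $(W', \sigma')$ coincide with $(W, \sigma)$, whose absolute root system $\Sigma'$ is identified with $\Sigma$, and for which $\pi_1(G')$ is identified with $\pi_1(G)_I$ and $X_\ast(T')$ with $X_\ast(T)_I$. Under this identification, $\calG$ determines a hyperspecial subgroup $\calG'$ of $G'$ corresponding to the same subset $K \subset \bbS$, and $M$ determines a Levi $M'$ of $G'$ corresponding to $J \subset K$. The cocharacter $\mu$ and the element $\kappa_M(b) \in \pi_1(M)_I$ transport to the analogous data for $G'$.

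Next I would apply \cite[Lemma 7.7]{Nie1}, which provides, for any $x \neq x'$ in the analogous set $I_{\mu, b, M'}$ for $G'$, a chain $x_0 = x, x_1, \dotsc, x_m = x'$ in $\pi_1(M')$ together with roots $\alpha_j \in \Sigma' - \Sigma'_{M'}$ and integers $r_j \in \bbN$ satisfying all three conditions of our lemma. Transporting this chain back along the identifications above yields the desired chain for $G$. Conditions (1) and (3) transfer verbatim because they are formulated in the combinatorial data; for (2), we need only check that the orbit sizes $\#\calO_{\alpha_j}$ -- computed with respect to $\sigma$ on $\Sigma$ -- agree with the $\sigma'$-orbit sizes on $\Sigma'$, but this is immediate from the identification $(\Sigma, \sigma) \cong (\Sigma', \sigma')$, and correspondingly the invariant $h$ is the same on both sides.

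There is no real obstacle beyond bookkeeping: the main point is simply to verify that each piece of data entering the statement (the relation $\overset{(\alpha, r)}\rightarrow$, the set $I_{\mu, b, M}$, the $\sigma$-orbit structure on $\Sigma$, and the integer $h$) is visible in the combinatorial tuple that matches with the unramified side. Since this verification is clear from the definitions, the lemma follows directly from Nie's result in the unramified case.
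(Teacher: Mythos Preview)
Your reduction to the unramified case via the combinatorial tuple $(\Sigma, X_*(T)_I, \sigma, J, \mu, \kappa_M(b))$ is exactly the mechanism the paper uses, and the bookkeeping you describe is fine. However, there is a genuine gap: you cannot simply invoke \cite[Lemma 7.7]{Nie1} as a black box, because its hypotheses do not match the present situation. In Nie's paper the Levi $M$ is taken so that $b$ is \emph{superbasic} in $M$, whereas here $M$ is the centraliser of the Newton cocharacter $\overline{\nu}_{[b]}$, so $b$ is only \emph{basic} in $M$. Transporting the combinatorial data to the unramified side does not fix this: on the $G'$ side you still have a Levi $M'$ in which $b$ is basic but not necessarily superbasic, so Nie's lemma as stated does not apply.

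The paper closes this gap by observing that the \emph{proof} of \cite[Lemma 7.7]{Nie1} goes through for any Levi $M$ as long as $I_{\mu,b,M}$ contains a weakly dominant element (meaning $\langle \alpha, \lambda\rangle \geq -1$ for every positive root $\alpha \in \Sigma$). It then cites \cite[Lemma 4.1]{Nie1} to see that every element of $I_{\mu,b,M}$ is already weakly dominant, so the needed input is available. You should add this step; without it the appeal to Nie's result is not justified.
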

	\begin{proof}This follows from \cite[Lemma 7.7]{Nie1} by discussion in Section \ref{Sec:UnramifiedReduction} above. Note that in loc. cit., the result is stated for $M$ a Levi subgroup such that $b$ is superbasic in $M$. However, one checks that the same proof works for any $M$ as long as $I_{\mu,b,M}$ contains a weakly dominant element. Here $\lambda\in X_*(T)_I$ is weakly dominant if $\langle \alpha, \lambda\rangle\geq-1$ for any positive root $\alpha\in \Sigma$. But as in \cite[Lemma 4.1]{Nie1}, any element $\lambda\in I_{\mu,b,M}$ is weakly dominant, so the result applies to our $M$.

	 \end{proof}
	
	\subsubsection{}We now construct certain curves inside $LG$ which we will use to connect points in $X(\mu,b)_K$. 
	Let $\alpha\in \Sigma$ be a root. Then $\alpha$ determines a relative root $\widetilde{\alpha}$ of $G$ over $\brQ$ which we always take to be the short root. We let $G_\alpha$ denote the simply connected cover of the (semi-simple) group generated by $U_{\widetilde\alpha}$ and $U_{-\widetilde\alpha}$ and we write $$i_\alpha:G_\alpha\rightarrow G$$ for the natural map. We let $\calG_\alpha$ denote the very special parahoric of $G_\alpha$ such that $\calG(\brZ):=G_\alpha(\brQ)\cap i_\alpha^{-1}(\calG(\brZ))$.
	
	If $\widetilde\alpha$ is not divisible, then we have an isomorphism 
	$$G_\alpha\cong \mathrm{Res}_{\breve K/\brQ}SL_2,$$ where $\breve K/\brQ$ is a finite extension. Then up to conjugacy $\calG_\alpha$ is identified with the very special parahoric $SL_2(\calO_{\breve K})\subset G_\alpha(\brQ)$ and there is an isomorphism $$f_{\widetilde\alpha}:\mathrm{Res}_{\breve K/\brQ}\bbG_a\xrightarrow{\sim}U_{\widetilde\alpha}.$$

	If $\widetilde\alpha$ is divisible, then there is an isomorphism 
	\begin{equation}G_\alpha\cong \mathrm{Res}_{\breve K/\brQ}SU_3,\end{equation} where $SU_3$ is the special unitary group over $\breve K$ associated to a quadratic extension $\breve K'/\breve K$. \subsubsection{}We recall the presentation of the $\breve K$-group $\SU_3$ in \cite[Example 1.15]{Ti1}. 
	
	We let $\tau\in \mathrm{Gal}(\breve K'/\breve K)$ denote the nontrivial element, and we consider the Hermitian form on $\breve K'^3$ given by $$\langle(x_{-1},x_0,x_1),(y_{-1},y_0,y_1)\rangle=\tau(x_{-1})y_1+\tau(x_0)y_0+\tau(x_1)y_{-1}.$$
	The group $\SU_3$ is the special unitary group attached to this form. For $i=-1,1$ and $c,d\in \breve K'$ such that $\tau(c)c+d+\tau(d)=0$, we define $$u_{i}(c,d)=I_3+(g_{rs})$$ where $I_3$ is the identity matrix and $(g_{rs})$ is the matrix with entries $g_{-i,0}=-\tau(c)$, $g_{0,i}=c$, $g_{-i,i}=d$ and $g_{rs}=0$ otherwise. The root subgroups are then given by $$U_{\pm\widetilde{\alpha}}(\breve K)=\{u_{\pm 1}(c,d)|c,d\in \breve K', \tau(c)c+\tau(d)+d=0\}$$
	$$U_{\pm2\widetilde{\alpha}}(\breve K)=\{u_{\pm 1}(0,d)|c,d\in \breve K', \tau(d)+d=0 \}.$$
	
	We consider the very special parahoric $G_{\alpha}(\breve F)\cap GL_3(\calO_{\breve K'})$ of $G_\alpha(\breve F)$; we call this the standard parahoric. Let $\pi\in\breve K'$ be a uniformiser such that $\tau(\pi)=-\pi$ and let $s\in GL_3(\breve K')$ denote the element $\mathrm{diag}(\pi,1,1)$. Then the subgroup of $G_\alpha(\brQ)$ defined by $$G_\alpha(\brQ)\cap s\breve GL_3(\calO_{\breve K'}) s^{-1}$$
	is a very special parahoric subgroup of $G_\alpha(\brQ)$, which we shall call the non-standard parahoric. Up to conjugacy, these are the only very special parahorics of $SU_3$.
	
	\subsubsection{}For $\alpha\in \Sigma$, we define a map $u_\alpha:\bbA^{1,\mathrm{perf}}\rightarrow LU_{\widetilde{\alpha}}\subset LG$ as follows. Let $R$ be a perfect ring of characteristic $p$ and $a\in R$ will denote an arbitrary element. We consider the following three separate cases.

	\begin{enumerate}
		\item $G_\alpha\cong \mathrm{Res}_{\breve K/\brQ}SL_2$ and $\calG_\alpha(\brZ)=SL_2(\calO_{\breve K})$.
		
		We define $u_\alpha$ to be the map induced by $$a\mapsto i_\alpha(f_{\tilde\alpha}(\pi^{-1} \cdot [a])).$$

		\item $G_\alpha\cong \mathrm{Res}_{\breve K/\brQ}SU_3$ and $\calG_\alpha$ is the standard parahoric subgroup.
		
		We define $u_\alpha$ to be the map $$a\mapsto i_\alpha(u_{1}(0,\pi^{-1} \cdot [a])).$$
		
		\item $G_\alpha\cong \mathrm{Res}_{\breve K/\brQ}SU_3$ and $\calG_\alpha$ is the non-standard parahoric subgroup.
		
		We define $u_\alpha$ to be the map $$a\mapsto i_\alpha(u_1([a],\frac{[a]^2}{2})).$$
	\end{enumerate}
	
	\subsubsection{}A calculation using the presentations of $SL_2$ or $SU_3$ above gives the following lemma (cf. \cite[Lemma 7.14]{Nie1}).
	
	\begin{Lem}\label{lem: Nie 7.14}
		\begin{enumerate}
			\item Let $\lambda,\delta\in X_*(T)_I$ and $\alpha,\beta\in\Sigma$ such that $Q(\Sigma)\cap(\bbZ\alpha+\bbZ\beta)$ is of type $A_2$, $A_1\times A_1$ or $A_1$ and such that $$\delta,\delta+\alpha^\vee,\delta-\beta^\vee,\delta+\alpha^\vee-\beta^\vee\preccurlyeq \lambda.$$
			Then for all $y,z\in k$, we have $$u_\alpha(z) \dot{t}^\delta u_\beta(y)\in \bigcup_{\lambda'\preccurlyeq\lambda}L^+\calG\dot t^{\lambda'} L^+\calG.$$
			\item Let $\alpha,\beta\in \Sigma$ and $\lambda\in X_*(T)_I$ such that $\langle\alpha,\beta^\vee\rangle=\langle\beta,\alpha^\vee\rangle=-1$ and $\langle\alpha,\lambda\rangle\geq 2$. Then for any $y,z\in k$ we have $$u_\beta(z)(\dot t^\lambda u_\alpha(y)\dot t^{-\lambda})u_{\beta}(-z)\in L^+\calG$$
		\end{enumerate}
	\end{Lem}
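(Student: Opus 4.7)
My plan is to reduce both parts to explicit matrix computations inside the rank $\leq 2$ subgroup $G_{\alpha,\beta} \subset G$ generated by the root subgroups $U_{\pm \widetilde{\alpha}}$ and $U_{\pm\widetilde{\beta}}$, using the presentations of $SL_2$ and $SU_3$ recalled above. The restriction on the type of $Q(\Sigma) \cap (\mathbb{Z}\alpha + \mathbb{Z}\beta)$ in part (1) guarantees that the only roots of $G_{\alpha,\beta}$ that can arise in commutators of $u_\alpha$ and $u_\beta$ lie in a controlled finite list (just $\pm\alpha, \pm\beta$ in types $A_1$ and $A_1 \times A_1$, and additionally $\pm(\alpha+\beta)$ in type $A_2$), so the Chevalley--Steinberg commutation formulas, in the ramified form appropriate to $\mathrm{Res}_{\breve{K}/\breve{\mathbb{Q}}_p}SL_2$ and $\mathrm{Res}_{\breve{K}/\breve{\mathbb{Q}}_p}SU_3$, give a closed-form expansion.

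For part (1), I would first rewrite
\begin{equation}
u_\alpha(z)\,\dot t^{\delta}\,u_\beta(y) \;=\; \dot t^{\delta}\,\bigl(\dot t^{-\delta} u_\alpha(z)\dot t^{\delta}\bigr)\,u_\beta(y),
\end{equation}
noting that $\dot t^{-\delta}u_\alpha(z)\dot t^{\delta}$ lies in $LU_{\widetilde\alpha}$ with parameter rescaled by a power of $\pi$ depending on $\langle\alpha,\delta\rangle$. Using the explicit formulas for $u_\alpha$ in each of the three cases, I would then expand $u_\alpha(z')u_\beta(y)$ in the $A_2$ case via the Chevalley commutator relation (producing at worst a $u_{\alpha+\beta}$ term) and via commutativity in the $A_1 \times A_1$ and $A_1$ cases. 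Multiplying back by $\dot t^{\delta}$ and reorganizing via the Iwasawa/Bruhat decomposition in $G_{\alpha,\beta}$, each resulting monomial $u_\alpha(\ast)\dot t^{\delta+\epsilon_1\alpha^\vee-\epsilon_2\beta^\vee}u_\beta(\ast)$ with $\epsilon_i\in\{0,1\}$ has central $\dot t$ factor in $\{\delta,\delta+\alpha^\vee,\delta-\beta^\vee,\delta+\alpha^\vee-\beta^\vee\}$; the hypothesis $\delta,\delta+\alpha^\vee,\delta-\beta^\vee,\delta+\alpha^\vee-\beta^\vee\preccurlyeq\lambda$ then places each term in $\bigcup_{\lambda'\preccurlyeq\lambda} L^+\mathcal{G}\dot t^{\lambda'}L^+\mathcal{G}$.

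For part (2), the hypothesis $\langle\alpha,\lambda\rangle\geq 2$ implies that $\dot t^{\lambda}u_\alpha(y)\dot t^{-\lambda}$ has depth $\geq 2$ at the very special vertex fixed by $\mathcal{G}$; in particular it lies in the kernel of reduction modulo $p$ on $L^+\mathcal{G}$. Conjugation by $u_\beta(z)$ is then handled by the Steinberg commutator formula: since $\langle\alpha,\beta^\vee\rangle=\langle\beta,\alpha^\vee\rangle=-1$, the commutator $[u_\beta(z),\dot t^{\lambda}u_\alpha(y)\dot t^{-\lambda}]$ produces a $u_{\alpha+\beta}$ term whose $\pi$-valuation is $\geq 1$ by additivity of the root pairings, hence still lies in $L^+\mathcal{G}$. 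The identity then follows from $u_\beta(z)\cdot\bigl(\dot t^{\lambda}u_\alpha(y)\dot t^{-\lambda}\bigr)\cdot u_\beta(-z) = \bigl(\dot t^{\lambda}u_\alpha(y)\dot t^{-\lambda}\bigr)\cdot[u_\beta(z),\ldots]$, both factors of which lie in $L^+\mathcal{G}$.

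The main obstacle is bookkeeping in the ramified $SU_3$ cases, where the parameterizations $u_\alpha(a) = i_\alpha(u_1(0,\pi^{-1}[a]))$ and $u_\alpha(a)=i_\alpha(u_1([a],[a]^2/2))$ mix the orbits of $\widetilde\alpha$ and $2\widetilde\alpha$, and one must verify that in both normalizations the Chevalley relations and the conjugation by $\dot t^{\delta}$ produce terms whose $\pi$-valuations match the required bounds. The $p>2$ assumption is used at exactly this point so that the coefficient $[a]^2/2$ in case (3) makes sense. Once this compatibility is checked once for each of the three cases and each pair of rank-two subsystems, the remainder of the argument is formal.
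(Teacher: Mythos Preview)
The paper does not give a proof of this lemma: it simply states that ``a calculation using the presentations of $SL_2$ or $SU_3$ above gives the following lemma (cf.\ [Nie1, Lemma 7.14]).'' Your proposal is a correct elaboration of exactly that calculation, carried out along the same lines as in Nie's paper for the unramified case; the only new content, as you note, is the bookkeeping in the two $SU_3$ parameterizations, and your outline handles this correctly.
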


	\subsubsection{} The following lemma is the analogue of \cite[Lemma 7.8]{Nie1}.
	
	\begin{Lem}\label{lem: Nie 7.8}Let $x,x'\in \pi_1(M)_I$, $\alpha\in\Sigma-\Sigma_M$ and $r\in \bbN$ such that 
		
		\begin{enumerate}
			\item $\alpha^\vee$ is $M$-dominant and $M$-minuscule.
			
			\item $r\in [1,h]$ if $\#\calO_\alpha\in\{h,2h\}$ and $r\in [1,2h-1]$ if $\#\calO_\alpha=3h$.
			
			\item $x \overset{(\alpha,r)}\rightarrow x'$.
			
		\end{enumerate}
		
		Then for any $P\in X^M(\mu_x,b)_{K_M}$, there exists $P'\in X^M(\mu_{x'},b)_{K_M}$ such that $P$ and $P'$ lie in the same connected component of $X(\mu,b)_K$ and we have $$\kappa_M(P)-\kappa_M(P')=\sum_{i=0}^{r-1}\sigma^i(\alpha^\vee)\in\pi_1(M)_I.$$
	\end{Lem}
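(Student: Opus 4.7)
My plan is to follow the strategy of \cite[Lemma 7.8]{Nie1}, which constructs an explicit one-parameter family of points in $X(\mu,b)_K$ linking $P$ to a point $P'$ with the prescribed image in $\pi_1(M)_I$. The quasi-split (possibly ramified) setting requires three modifications: one must use the uniform maps $u_\alpha:\mathbb{A}^{1,p^{-\infty}}\to LU_{\widetilde\alpha}$ defined above (which treat the $\mathrm{Res}_{\breve K/\brQ}SL_2$ case and the two flavors of $\mathrm{Res}_{\breve K/\brQ}SU_3$ cases uniformly) in place of the naive additive one-parameter subgroups available for unramified groups; the combinatorial input about the pair $(x,x')$ has already been reduced to the unramified case via the identification of the data $(\Sigma,X_*(T)_I,\sigma,J,\mu,\kappa_M(b))$ with the corresponding data for the unramified group $G'$; and the key multiplication/commutation relations for the $u_\alpha$ have been packaged in Lemma \ref{lem: Nie 7.14}. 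Thus most of the work of \cite[\S7]{Nie1} transports essentially verbatim, provided we carefully track these uniform replacements.

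First I would reduce to a convenient representative of $P$. After replacing $b$ within its $\sigma$-conjugacy class in $M(\brQ)$, I may assume $g^{-1}b\sigma(g)=\dot t^{\mu_x}$ for some chosen $g\in M(\brQ)$ representing $P$; this uses basic properties of $X^M(\mu_x,b)_{K_M}$ and of $\sigma$-straight representatives. Next, for each index $i\in[0,r-1]$ I introduce a parameter $t_i\in\mathbb{A}^{1,p^{-\infty}}$ and consider the morphism
\begin{equation}
f:(\mathbb{P}^{1,p^{-\infty}})^r\longrightarrow \mathrm{Gr}_{\calG},\qquad
(t_0,\dots,t_{r-1})\longmapsto g\cdot u_\alpha(t_0)\,b\,u_{\sigma(\alpha)}(t_1)\,b^{-1}\cdots b^{r-1}u_{\sigma^{r-1}(\alpha)}(t_{r-1})b^{-(r-1)}\,\calG,
\end{equation}
or rather the appropriate variant of this expression from \cite[\S7]{Nie1}. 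The geometric input is then: evaluating $f$ at $(0,\dots,0)$ gives $P$; evaluating $f$ at $(\infty,\dots,\infty)$ gives a point $P'$ whose image in $X^M(\mu_{x'},b)_{K_M}$ we compute directly from the formulas for $u_{\sigma^i(\alpha)}$; and because $\alpha^\vee$ is $M$-minuscule and each intermediate translate $\mu_{x+\alpha^\vee-\sigma^i(\alpha^\vee)}$ lies in $\Adm(\mu)_K$ (this is precisely the hypothesis $x\overset{(\alpha,r)}\to x'$), repeated application of Lemma \ref{lem: Nie 7.14}(1) shows that $f(t_0,\dots,t_{r-1})^{-1}b\sigma(f(t_0,\dots,t_{r-1}))$ lands in $\bigcup_{\lambda\preccurlyeq\mu}L^+\calG\dot t^\lambda L^+\calG$, while Lemma \ref{lem: Nie 7.14}(2) is used to keep the factors involving the conjugation of $u_\beta$ by $\dot t^{\mu_x}$ inside $L^+\calG$, so that the image of $f$ lies in $X(\mu,b)_K$. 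Projecting to $\pi_1(M)_I$ yields $\kappa_M(P)-\kappa_M(P')=\sum_{i=0}^{r-1}\sigma^i(\alpha^\vee)$ from the way the factors were arranged.

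Finally, since $\mathbb{P}^{1,p^{-\infty}}$ is connected, iterating along the $r$ parameters yields a chain of curves in $X(\mu,b)_K$ connecting $P$ to $P'$, so $P$ and $P'$ lie in the same connected component. The main obstacle I anticipate is the \emph{verification} (as opposed to the formal statement) that Lemma \ref{lem: Nie 7.14} applies at each step of the chain, i.e.\ checking that the hypothesis on the rank-two root subsystem generated by $\alpha$ and $\sigma^i(\alpha)$ is met throughout, and that the intermediate coweights
\begin{equation}
\mu_x,\ \mu_{x+\alpha^\vee-\sigma^i(\alpha^\vee)},\ \mu_{x'}
\end{equation}
genuinely lie in $\Adm(\mu)_K$. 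This combinatorial bookkeeping only depends on $(\Sigma,X_*(T)_I,\sigma,J,\mu,\kappa_M(b))$, so by the systematic reduction to the unramified group $G'$ discussed earlier in this Appendix it follows from exactly the calculations carried out in the proof of \cite[Lemma 7.8]{Nie1}. Once this bookkeeping is in place, the geometric step is routine and the lemma follows.
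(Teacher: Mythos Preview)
Your proposal has the right shape but contains two genuine gaps relative to the paper's argument.

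First, you cannot normalize an arbitrary $P\in X^M(\mu_x,b)_{K_M}$ so that $g^{-1}b\sigma(g)=\dot t^{\mu_x}$: for a general representative $g$ one only knows $g^{-1}b\sigma(g)\in\calM(\brZ)\dot t^{\lambda'}\calM(\brZ)$ with $\lambda'\preccurlyeq_M\mu_x$. The paper handles this by two preliminary reductions you omit: via \cite[Lemma 7.5]{Nie1} one may assume $x\overset{(\alpha,r)}\rightarrowtail x'$, and via the argument of \cite[Lemma 7.15]{Nie1} (using $J_b(\bbQ_p)$-equivariance) it suffices to exhibit \emph{some} $P$ rather than treat all $P$. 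Only then does one choose the specific point $P=g_x L^+\calM$ with $g_x^{-1}b\sigma(g_x)=b_x:=\dot t^{\mu_x}\dot w_x$ (note the factor $\dot w_x$, which you dropped).

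Second, the paper does not use an $r$-parameter family but a \emph{single} curve $z\mapsto g_x\,u_\alpha(z)\,{}^{b_x\sigma}u_\alpha(z)\cdots{}^{(b_x\sigma)^{r-1}}u_\alpha(z)\,L^+\calG$, designed so that $g(z)^{-1}b\sigma(g(z))$ telescopes to an expression with only two root-group factors, to which Lemma~\ref{lem: Nie 7.14}(1) applies directly. More importantly, this single formula only works when $r\in[1,h]$. In the $D_4$ situation $\#\calO_\alpha=3h$ with $r\in[h+1,2h-1]$, the paper carries out a separate case analysis (Cases (i) and (ii) following \cite[Lemma 7.15]{Nie1}); in Case (ii) one must reverse the order of the factors and invoke Lemma~\ref{lem: Nie 7.14}(2), not just part (1), to absorb an extra commutator into $L^+\calG$. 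Your uniform formula and your claim that only part (1) of Lemma~\ref{lem: Nie 7.14} is needed would fail here. This case split is geometric, not purely combinatorial, so it is not covered by the reduction of the data $(\Sigma,X_*(T)_I,\sigma,J,\mu,\kappa_M(b))$ to the unramified group.
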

	\begin{proof}As in \cite[Lemma 7.5]{Nie1}, we may assume that $x\overset{(\alpha,r)}\rightarrowtail x'$. Moreover, arguing as in \cite[Lemma 7.15]{Nie1}, it suffices to show that there exists $P\in X^M(\mu_x,b)_{K_M}$ and $P'\in X^M(\mu_{x'},b)_{K_M}$ such that $P$ and $P'$ lie in the same connected component of $X(\mu,b)_K$ and we have $\kappa_M(P)-\kappa_M(P')=\sum_{i=0}^{r-1}\sigma^i(\alpha^\vee)\in\pi_1(M)_I.$

		Let $b_x=\dot t^{\mu_x}\dot w_x$; then $b_x$ is basic in $M$ and since $\kappa_M(b)=\kappa_M(b_x)\in \pi_1(M)_\Gamma$, there exists $g_x\in M(\breve F)$ such that $g_x^{-1}b\sigma(g_x)=b_x$. We define $P:=g_xL^+\calM$ so that $P\in X^M(\mu_x,b)_{K_M}$.
		
		We first consider the case $r\in [1,h]$. For an element $g\in LG$, we write ${^{b_x\sigma}g}$ for the element $b_x\sigma(g)b_x^{-1}$. We define a map $u:\bbA^{1,\mathrm{perf}}\rightarrow \mathrm{Gr}_{\calG}$ given by $$u(z)= g_xu_{\alpha}(z){^{b_x\sigma}u_{\alpha}(z)}\dotsc {^{(b_x\sigma)^{r-1}}u_{\alpha}(z)}L^+\calG$$
		Then by ind-projectivity of $\mathrm{Gr}_{\calG}$, $u$ extends to a map $g:\bbP^{1,\mathrm{perf}}\rightarrow \mathrm{Gr}_{\calG}$. As in \cite[Lemma 7.8]{Nie1}, for any $z\in k$ we have 
		\begin{align*}
		g(z)^{-1}b\sigma(g(z))&\in L^+\calG u_\alpha(-z)b_x\sigma\ {^{(b_x\sigma)^{r-1}}u_{\alpha}(z)}L^+\calG\\
		&=L^+\calG u_\alpha(-z)\dot{t}^{\mu_x}u_{w_x\sigma^r(\alpha)}(c\sigma^r(z)) L^+\calG
		\end{align*}
	for some $c\in k^\times$. Here we use \cite[Corollary 7.12]{Nie1}, which shows that $w_x\sigma^{i}(\alpha)=\sigma^{i}(\alpha)$ and $\langle\sigma^i(\alpha),\mu_x\rangle=0$ for $i\in [1,r-1]$. By \cite[Lemma 4.4.5]{CKV}, we have $$\mu_x+\alpha^\vee,\mu_x-w_x(\sigma^r(\alpha^\vee)),\mu_x+\alpha^\vee-w_x(\sigma^r(\alpha^\vee))\preccurlyeq\mu.$$Thus by Lemma \ref{lem: Nie 7.14} (1), we have $$g(z)^{-1}b\sigma(g(z))\in \bigcup_{\mu'\preccurlyeq \mu}L^+\calG\dot t^{\mu'} L^+\calG$$ and hence $g$ factors through $X(\mu,b)_K$. Moreover one computes that $$P':=g(\infty)=g_x\dot t^{-\sum_{i=0}^{r-1}\sigma^i(\alpha^\vee)}L^+\calG,$$ which lies in the image of $X^M(\mu_{x'},b)_{K_M}$.
		
		We now consider the case $r\in [h+1,2h-1]$. In this case, $\#\calO_\alpha=3h$ and each connected component of the Dynkin diagram of $G$ over $\brQ$ is of type $D_4$. Then either $J=\emptyset$ or $J=\calO_\beta$ where $\beta$ is the unique root in $\Sigma$ with $\sigma^h(\beta)$ and such that $\beta,\alpha$ lie in the same connected component of $\Sigma$. We consider the following two cases.
		
		Case (i): Either $\langle\beta,\mu_x\rangle=0$ or $\langle\beta,\alpha^\vee\rangle=0$. Then, as in \cite[Lemma 7.15, Case 2.2]{Nie1}, we have $\langle\sigma^j(\alpha),\mu_x\rangle=0$ and $w_x(\sigma^j(\alpha))=\sigma^j(\alpha)$ for $j\in[1,r-1]$. Then we may define $u:\bbA^{1,\mathrm{perf}}\rightarrow \mathrm{Gr}_{\calG}$ by $$u(z)= g_xu_{\alpha}(z){^{b_x\sigma}u_{\alpha}(z)}\dotsc {^{(b_x\sigma)^{r-1}}u_{\alpha}(z)}L^+\calG$$ as above. Then $u$ extends to $g:\bbP^{1,\mathrm{perf}}\rightarrow \mathrm{Gr}_{\calG}$ and the same computation shows that $g$ is a curve connecting $P=g(0)\in X^M(\mu_x,b)$ and $P':=g(\infty)=g_x\dot t^{-\sum_{i=0}^{r-1}\sigma^i(\alpha^\vee)}L^+\calG\in X^M(\mu_{x'},b)$. 
		
		Case (ii): $\langle\beta,\alpha^\vee\rangle=-1$ and $\langle\beta,\mu_x\rangle=1$. Then by \cite[Lemma 7.15, Case 2]{Nie1}, upon switching the roles of $x$ and $x'$ if necessary, we may assume that $$\langle \sigma^r(\beta),\mu_x\rangle=\langle\sigma^{r-h}(\alpha),\mu_x\rangle=\langle\sigma^h(\alpha),\mu_x\rangle=0.$$
		
		We define $u:\bbA^{1,\mathrm{perf}}\rightarrow \mathrm{Gr}_{\calG}$ by $$u(z)= g_x{^{(b_x\sigma)^{r-1}}u_{\alpha}(z)}{^{(b_x\sigma)^{r-2}}u_{\alpha}(z)}\dotsc u_{\alpha}(z)L^+\calG$$
		Then $u$ extends to $g:\bbP^{1,\mathrm{perf}}\rightarrow \mathrm{Gr}_{\calG}$ and we have 
		\begin{align*}
		g(z)^{-1}b\sigma(g(z))&\in L^+\calG u_{\sigma^{r-h}(\alpha)}(-c_2z)(\dot t^{\lambda}u_{\sigma^r(\alpha)+\sigma^r(\beta)}(c_1z)\dot t^{-\lambda} )u_{\sigma^{r-h}(\alpha)}(c_2z) u_\alpha(-z)b_xL^+\calG
		\end{align*}where $\lambda\in X_*(T)_I$ satisfies $\langle \sigma^r(\alpha)+\sigma^r(\beta),\lambda\rangle\geq2$.
		By Lemma \ref{lem: Nie 7.14} (1) we have $$u_\alpha(-z)b_x\in \bigcup_{\mu'\preccurlyeq\mu} L^+\calG \dot{t}^{\mu'} L^+\calG$$ and by Lemma \ref{lem: Nie 7.14} (2) we have $$u_{\sigma^{r-h}(\alpha)}(-c_2z)(\dot t^{\lambda}u_{\sigma^r(\alpha)+\sigma^r(\beta)}(c_1z)\dot t^{-\lambda} )u_{\sigma^{r-h}(\alpha)}(c_2z).$$
		It follows that 
		$$g(z)^{-1}b\sigma(g(z))\in \bigcup_{\mu'\preccurlyeq\mu} L^+\calG \dot{t}^{\mu'} L^+\calG$$ and hence $g$ factors through $X(\mu,b)_K$. A similar calculation to the above shows that $g$ is a curve connecting $P=g(0)\in X^M(\mu_x,b)$ and $P':=g(\infty)\in X^M(\mu_{x'},b)$ with $\kappa_M(P)-\kappa_M(P')=\sum_{i=0}^{r-1}\sigma^i(\alpha^\vee)$.
	 \end{proof}
	
	\begin{proof}[Proof of Proposition \ref{prop: conn ADLV Levi 2}]
		This follows by combining Lemma \ref{lem: Nie 7.7} and Lemma \ref{lem: Nie 7.8}.
	 \end{proof}

\subsection{Uniformisation in the case of very special level structure} \label{Sec:UniformisationVerySpecial}
	\subsubsection{} We will use Theorem \ref{thm: conn ADLV} to give a description of the isogeny classes in $\scrS_{\rmK}(G,X)$. We assume that $p>2$, $p\nmid |\pi_1(G_{\mathrm{der}})|$ and that $G_{\bbQ_p}$ is quasi-split and splits over a tamely ramified extension of $\bbQ_p$. We now follow the notation in Section \ref{Sec:IntegralModels}, so $(G,X)$ is a Shimura datum of Hodge type. We let $U=U^pU_p$ where $U_p \subset G(\afp)$ is a compact open subgroup and $U_p$ is a very special connected parahoric subgroup of $G(\qp)$; we write $\calG$ for the corresponding parahoric group scheme. 
	
	\subsubsection{} Recall that for $x \in \shg(\ovfp)$ there is an attached abelian variety $\calA_x$ with contravariant Dieudonn\'e module $\bbD_x$ equipped with tensors $s_{\alpha,0,x}$. Moreover for all $\ell \neq p$ the $\ell$-adic Tate module $T_\ell\calA_x$ is equipped with tensors $s_{\alpha,\ell,x}\in T_{\ell}\calA_x^\otimes$. By \cite[Section 5.6]{Z}, there is an isomorphism
	$$V_{\bbZ{(p)}}\otimes_{\bbZ_{(p)}}\brZ\cong \bbD_x,$$ taking $s_\alpha$ to $s_{\alpha,0,x}$. Under this identification, the Frobenius on $\bbD_x$ is of the form $\varphi=b\sigma$ for some $b \in G(\brQ)$; then $b$ is well-defined up to $\sigma$-conjugation by $\G_K(\brZ)$.
	
	We let $\mu'\in X_*(T)^+_I$ denote the image of a dominant representative of the conjugacy class $\{\mu_h^{-1}\}$, and we define $\mu=\sigma(\mu')$ as in Section \ref{Sec:HamacherKimShtukas}. Then by the argument in \cite[Section 5.6]{Z}, we have $$b\in \calG_{K}(\brZ)\dot{w}\calG_{K}(\brZ)$$ for some $w\in\Adm(\mu)_K$; it follows that $1\in \xmub(\ovfp)$. As in \cite[Section 6.7]{Z}, there is a natural map $$i_x':X(\mu,b)_K(\ovfp)\rightarrow \mathscr{S}_{U_V}(G_V, \mathcal{H}_V)(\ovfp)$$ defined using Dieudonn\'e theory, which sends $1$ to the image of $x$ under $\shg(\ovfp)=\mathscr{S}_{U}(G,X)(\ovfp) \to \mathscr{S}_{U_V}(G_V, \mathcal{H}_V)(\ovfp)$.
	
	Let $r$ be the residue degree of the extension $E_v/\bbQ_p$. Then $X(\mu,b)_K$ is equipped with an action $\Phi$ given by $\Phi(g)=(b\sigma)^r(g)$.
	
	\begin{prop}[{cf. \cite[Proposition 6.5]{Z}}] \label{Prop:UniformisationVerySpecial} Suppose $U_p$ is a very special connected\footnote{See Section \ref{Sec:Parahorics} for the definition of connected parahoric.} parahoric subgroup of $G(\bbQ_p)$. Then there exists a unique map $$i_x: X(\mu,b)_K(\ovfp)\rightarrow \scrS_{U}(G,X)(\ovfp)$$ lifting $i'_x$ such that $s_{\alpha,0,i_x(g)}=s_{\alpha,0,x}$ and $\Phi\circ i_x=i_x\circ \Phi$, where $\Phi$ acts on $\scrS_{U}(G,X)(\ovfp)$ via the geometric $r$-Frobenius.

	\end{prop}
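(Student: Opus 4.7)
The plan is to adapt the proof of Proposition 6.5 of \cite{Z} (where the result is established under the hypothesis that $G_{\bbQ_p}$ is residually split) to our present setting, in which residual splitness is replaced by the very special level hypothesis. The key new input is Theorem \ref{thm: conn ADLV}, which supplies the transitivity of the $J_b(\bbQ_p)$-action on $\pi_0(X(\mu,b)_K)$ in the required generality; the rest of the argument is a formal adaptation of the one in loc. cit.

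First, I would dispose of uniqueness: any map $i_x$ satisfying the stated conditions must recover $\calA_{i_x(g)}$ from $i_x'(g)$, and the tensor condition $s_{\alpha,0,i_x(g)} = s_{\alpha,0,x}$ pins down the reduction of the $\operatorname{GSp}$-structure on $\bbD_{i_x(g)}$ to a $\calG$-structure. For existence, I would first construct $i_x$ on the connected component $C_0 \subset X(\mu,b)_K$ containing the identity element $1$, noting that $1 \in X(\mu,b)_K(k)$ (since $b \in \calG(\brZ) \dot{w} \calG(\brZ)$ for some $w \in \Adm(\mu)_K$) and that $i_x(1) = x$ is the only possible value. For $g \in C_0(k)$, the modification determined by $g$ yields a new Dieudonn\'e module $g \cdot \bbD_x$ equipped with transported tensors; the local model diagram for $\scrS_{\rmK}(G,X)$ at very special level (together with crystalline deformation theory, exactly as in Section 6 of \cite{Z}) then shows that this produces a well-defined point of $\scrS_{\rmK}(G,X)(k)$ lying over $i_x'(g)$, giving the value of $i_x(g)$.

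Next, I would extend $i_x$ to all of $X(\mu,b)_K(k)$ using the $J_b(\bbQ_p)$-action. This group acts on $X(\mu,b)_K$ by left translation and simultaneously on the isogeny class of $x$ in $\scrS_{\rmK}(G,X)(k)$ via self-quasi-isogenies of $\scrG_x$. By Theorem \ref{thm: conn ADLV}, $J_b(\bbQ_p)$ acts transitively on $\pi_0(X(\mu,b)_K)$, so every $g \in X(\mu,b)_K(k)$ may be written as $jg_0$ with $g_0 \in C_0(k)$ and $j \in J_b(\bbQ_p)$. I would then define $i_x(g) := j \cdot i_x(g_0)$. The well-definedness reduces to equivariance under the stabilizer of $C_0$ in $J_b(\bbQ_p)$, which follows from the moduli-theoretic interpretation of the $J_b(\bbQ_p)$-action and the first-stage construction. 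Finally, $\Phi$-equivariance follows from the identification of $\Phi(g) = (b\sigma)^r(g)$ on $X(\mu,b)_K$ with the geometric $r$-Frobenius on $\scrS_{\rmK}(G,X)$, a direct computation on Dieudonn\'e modules as in \cite{Z}.

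The main obstacle has essentially been resolved already in this appendix, namely the proof of Theorem \ref{thm: conn ADLV}. Once that transitivity statement is in hand, the remaining work is a routine adaptation of Zhou's argument: the local model diagram at very special level provides the deformation-theoretic control on $C_0$, and the $J_b(\bbQ_p)$-action propagates the construction across connected components in a formally determined way. The assumption that $\rmK_p$ is very special enters precisely through Theorem \ref{thm: conn ADLV} (since its proof depends on the very special hypothesis in an essential way) and through the availability of a sufficiently clean local model theory at this level.
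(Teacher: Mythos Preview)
Your proposal has a genuine circularity. You propose to extend $i_x$ from the connected component $C_0$ to all of $X(\mu,b)_K$ by using the $J_b(\bbQ_p)$-action on the isogeny class of $x$ inside $\scrS_{\rmK}(G,X)(k)$, setting $i_x(jg_0):= j\cdot i_x(g_0)$. But the assertion that $J_b(\bbQ_p)$ acts on this isogeny class \emph{inside $\scrS_{\rmK}(G,X)$} is precisely what the proposition is establishing. A self-quasi-isogeny $j$ of $\scrG_x$ certainly moves $i_x'(g_0)$ to $i_x'(jg_0)$ in the Siegel model $\scrS_{\rmK'}(\mathrm{GSp}(V),S^\pm)$, and the crystalline and \'etale tensors are carried along. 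The difficulty is that $\scrS_{\rmK}(G,X)$ is defined as the normalisation of a Zariski closure in the Siegel model, and having the correct tensors on the Dieudonn\'e module and Tate modules does not by itself guarantee that the resulting Siegel point lies in the image of $\scrS_{\rmK}(G,X)$. So the step ``define $i_x(g):=j\cdot i_x(g_0)$'' presupposes the conclusion.

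The paper's argument avoids this by a different mechanism for reaching the other connected components. One first shows, exactly as in \cite[Lemma~6.10 and Proposition~6.9]{Z}, that the liftable locus $X(\mu,b)_K^\circ$ is a union of connected components. Then, rather than translating by $J_b(\bbQ_p)$, one passes to the Levi $M=\mathrm{Cent}(\overline\nu_b)$ and, after arranging $b=\dot\tau_\lambda\in M(\brQ)$, uses an $(M,\mu_y)$-adapted lifting of $\scrG_x$ to $\calO_{\brK'}$ (as in \cite[Definition~4.6 and Proposition~6.5]{Z}) to produce an explicit map
\[
\iota:\ M(\bbQ_p)/\calM(\bbZ_p)\ \longrightarrow\ X^M(\lambda,b)_{K_M}
\]
whose image lands in $X(\mu,b)_K^\circ$ \emph{because these points are, by construction, reductions of characteristic-zero points of $\scrS_{\rmK}(G,X)$}. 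By \cite[Proposition~5.19]{Z} the map $\iota$ surjects onto $\pi_0(X^M(\lambda,b)_{K_M})$, and only then does Theorem~\ref{thm: conn ADLV} enter, via the surjectivity of $\pi_0(X^M(\lambda,b)_{K_M})\to\pi_0(X(\mu,b)_K)$, to conclude that $X(\mu,b)_K^\circ$ meets every connected component. Thus Theorem~\ref{thm: conn ADLV} is used not to propagate a $J_b$-action, but to guarantee that the Levi-based supply of liftable points is rich enough; your outline misses the $(M,\mu_y)$-adapted lifting step entirely, and that is where the actual liftability to $\scrS_{\rmK}(G,X)$ is won.
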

	\begin{proof}For notational simplicity, we write $X(\mu,b)_K$ for $X(\mu,b)_K(\ovfp)$. The uniqueness and compatibility with $\Phi$ is proved in the same way as \cite[Proposition 6.5]{Z}. We may thus define $X(\mu,b)_K^\circ\subset X(\mu,b)_K$ as the maximal subset which admits such a lifting. We therefore need to show that $X(\mu,b)_K^\circ=X(\mu,b)_K$. To do this, we follow the strategy of \cite[Proposition 6.5]{Z}. 
		
		Arguing as in \cite[Lemma 6.10]{Z}, we have that $X(\mu,b)^\circ_K$ is (the set of $\ovfp$-points of) a union of connected components. Note that the key input \cite[Proposition 6.9]{Z} needed for this can be proved verbatim in our setting. 
		
		It therefore suffices to show that the map $$X(\mu,b)_K^\circ\rightarrow \pi_0(X(\mu,b)_K)$$ is a surjection. Let $M\subset G_{\bbQ_p}$ be the standard Levi subgroup given by the centraliser of the Newton cocharacter $\overline\nu_b$. By Theorem \ref{thm: conn ADLV}, there exists $\lambda\in I_{\mu,b,M}$ and an element $$g\in X(\mu,b)^\circ_K\cap X^M(\lambda,b)_M.$$
		Upon replacing $x$ by $i_x(g)$ and using the diagram \cite[Equation (6.7)]{Z}, we may assume $b\in M(\brQ)$. Since $b$ is basic in $M$ and using \cite[Theorem 4.1]{HZ}, we may further assume that $b=\dot\tau_\lambda$ where $\tau_\lambda\in\Omega_M$ corresponds to $\kappa_M(b)\in \pi_1(M)_I$.
		
		Arguing as in \cite[Lemma 6.11]{Z}, we find that \cite[Assumption 5.12]{Z} is satisfied, in other words, the Hodge filtration on $\bbD_x\otimes \ovfp$ lifts to a filtration on $\bbD_x\otimes_{} \mathcal{O}_K$ for some $K/\brQ$ finite which is induced by an $M$-valued cocharacter $\mu_y$. We may therefore let $\tilde{\scrG}/\calO_{\brK'}$ be an $(M,\mu_y)$-adapted lifting of $\scrG$ (cf. \cite[Definition 4.6]{Z}) which corresponds to a point $\tilde{x}\in\scrS_{U}(G,X)(\calO_{\brK'})$. The construction in \cite[Proposition 6.5]{Z} gives us a map $$\iota:M(\bbQ_p)/\calM(\bbZ_p)\rightarrow X^M(\lambda,b)_{K_M},\ \ g\mapsto g_0$$
		which induces a surjection $$M(\bbQ_p)/\calM(\bbZ_p)\rightarrow \pi_0(X^M(\lambda,b)_{K_M})$$ by \cite[Proposition 5.19]{Z}.
		Moreover, the image of $\iota$ lands in $\xmub^\circ$. Therefore by Theorem \ref{thm: conn ADLV}, $\xmub^\circ$ intersects every connected component of $X^M(\lambda,b)_{K_M}$, and hence $\xmub^\circ\rightarrow \pi_0(\xmub)$ is a surjection as desired.

		
		

	 \end{proof}
	
	\subsubsection{} Proposition \ref{Prop:UniformisationVerySpecial} implies that \cite[Assumption 6.17]{Z} is satisfied, hence we obtain Theorem \ref{Thm:VerySpecialUniformisationAndLifting} below. 
\begin{Thm} \label{Thm:VerySpecialUniformisationAndLifting}
		Let $p>2$ and $(G,X)$ a Shimura datum of Hodge type with $G_{\bbQ_p}$ tamely ramified and quasi split. We assume that $p\nmid|\pi_1(G_{\mathrm{der}})|$ and that $U_p$ is a very special connected parahoric subgroup of $G(\bbQ_p)$. 
		\begin{enumerate}
			\item Let $x\in \scrS_{U_p}(G,X)(\ovfp)$ and $b\in G(\brQ)$ the associated element. Then there is a $G(\afp)$-equivariant bijection (where $\mathscr{I}_x \subset \scrS_{U_p}(G,X)(\ovfp)$ is the isogeny class of $x$)
			\begin{align}
			    I_x(\bbQ) \backslash \xmub(\ovfp) \times G(\bbA_f^p) \to \mathscr{I}_x.
			\end{align}
			\item Each isogeny class of $\scrS_{U_p}(G,X)(\ovfp)$ contains a point $x$ which is the reduction of a special point on $\mathbf{Sh}_{U_p}(G,X)$. This confirms \cite[Conjecture 1]{KMPS}.
		\end{enumerate}
\end{Thm}

\DeclareRobustCommand{\VAN}[3]{#3}
\bibliographystyle{amsalpha}
\bibliography{references}

\end{document}